\documentclass[reqno,twoside,11pt]{amsart}
\usepackage{amsmath, amsfonts, amssymb, esint, amsthm, nicefrac, bbm, dsfont}
\usepackage{epsfig, graphicx,xcolor}

\newcommand{\dx}{~\mathrm{d}x}
\newcommand{\sym}{\mathrm{sym}}

\newcommand{\Id}{\mathrm{Id}}

\newcommand{\R}{\mathbb{R}}
\def\endproof{\hspace*{\fill}\mbox{\ \rule{.1in}{.1in}}\medskip }
\newcommand*{\dbar}[1]{\bar{\bar{#1}}}
\newcommand*{\tbar}[1]{\bar{\dbar{#1}}}

\numberwithin{equation}{section}
\theoremstyle{plain}

\newtheorem*{theorem*}{Theorem}
\newtheorem{theorem}{Theorem}[section]
\newtheorem{lemma}[theorem]{Lemma}
\newtheorem{corollary}[theorem]{Corollary}

\theoremstyle{definition}

\begin{document}
\title [The Monge-Amp\`ere system in codimension $d_*-d+1$]
{Full flexibility of the Monge-Amp\`ere system in 
  codimension $d_*-d+1$}
\author{Wentao Cao, Jonas Hirsch, Dominik Inauen and Marta Lewicka}
\address{W.C.: Academy for Multidisciplinary Studies, Capital Normal University, West 3rd Ring North Road 105, Beijing, 100048 P.R. China}
\address{J.H.: Institut f\"{u}r Mathematik, Universit\"{a}t Leipzig, D-04109, Leipzig, Germany}
\address{D.I.: Universit\'e de Fribourg, CH-1700, Fribourg, Switzerland}
\address{M.L.: University of Pittsburgh, Department of Mathematics, 
139 University Place, Pittsburgh, PA 15260, USA}
\email{cwtmath@cnu.edu.cn, jonas.hirsch@math.uni-leipzig.de, 
dominik.inauen@unifr.ch, lewicka@pitt.edu} 

\date{\today}
\thanks{AMS classification: 35J96, 53C42, 53A35\\
W.C. was supported in part by the National Natural Science Foundation of China (No. 12471224). 
M.L. was partially supported by NSF grant DMS-2006439.
}

\begin{abstract} 
We prove that $\mathcal{C}^{1,\alpha}$ solutions to the Monge-Amp\`ere
system in dimension $d$ and codimension $k= d_*-d+1$, where
$d_*$ denotes the Janet dimension, are dense in the space
of continuous functions, for every H\"older exponent $\alpha<1$.
Our result strengthens the statement in \cite{lew_conv}, obtained for
$k = 2d_*$ and based on ideas from \cite{Kallen} in the context of the
isometric immersion system. It also generalizes the result of
\cite{in_lew2}, where full flexibility was established in dimension
$d=2$ and codimension $k=2$.
The same proof scheme further yields local full flexibility of
isometric immersions of $d$-dimensional Riemannian metrics into
Euclidean space of dimension $d_* + 1$, generalizing the result in
\cite{lew_full2d2k} proved for $d=k=2$. By using techniques of \cite{CDS}, the result can then be extended to compact manifolds, in codimension $(d+1)d_*-d+1$. 
\end{abstract}

\maketitle

\section{Introduction}\label{sec_intro}

In this paper, we prove a new {\em flexibility} result
for the Monge-Amp\`ere system, in arbitrary dimension $d$, and with the
specific target codimension $k$:
$$ k = \bar d \quad \mbox{ where } \quad \bar d\doteq \frac{d^2-d+2}{2}.$$
We note the relation of $\bar d$ to the Janet dimension $d_*= d(d+1)/2$:
$$\bar d= d_*-d+1.$$
In particular, $\bar d$ is significantly smaller than $2d_*$, 
that is the codimension in which {\em full flexibility}, in the sense of Theorem \ref{th_final} below, was previously established.
More precisely, we prove:
\begin{theorem}\label{th_final}
Let $\omega\subset\R^d$ be an open, bounded, $d$-dimensional
set. Let  the target codimension:
$$\bar d = d_*-d+1 \quad \mbox{where} \quad d_*=\frac{d(d+1)}{2}.$$
Given vector fields $v\in\mathcal{C}^1(\bar\omega,\R^{\bar d})$,
$w\in\mathcal{C}^1(\bar\omega,\R^{d})$
and a matrix field $A\in\mathcal{C}^{r,\beta}(\bar\omega,\R^{d\times d}_\sym)$,
assume:
\begin{equation*}
\frac{1}{2}(\nabla v)^T\nabla v + \sym\nabla
w < A \quad \; \mbox{ in } \; \bar\omega,
\end{equation*}
in the sense of matrix inequalities. Then, for every exponent $\alpha$, satisfying:
\begin{equation}\label{VKrange} 
\alpha<\min\Big\{\frac{r+\beta}{2}, 1 \Big\},
\end{equation}
and for every $\epsilon>0$,  there exist $\tilde v\in\mathcal{C}^{1,\alpha}(\bar\omega,\R^{\bar d})$,
$\tilde w\in\mathcal{C}^{1,\alpha}(\bar\omega,\R^{d})$ such that:
\begin{align*}
& \|\tilde v - v\|_0\leq \epsilon, \qquad \|\tilde w - w\|_0\leq \epsilon,
\nonumber \vspace{1mm}\\ 
& \frac{1}{2}(\nabla \tilde v)^T\nabla \tilde v + \sym\nabla
\tilde w = A \quad \mbox{ in }\;\bar\omega. \nonumber
\end{align*}
\end{theorem}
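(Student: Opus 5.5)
We follow the convex-integration scheme of Nash--Kuiper type as adapted to the Monge--Amp\`ere system, but organize each stage so that the full defect is cancelled using only $\bar d = d_*-d+1$ normal directions. The key idea is to split the $d_*$-dimensional space of symmetric defects into two parts. The first part consists of $d-1$ "primitive" rank-one directions, which are absorbed by the $w$ component through a suitable gauge (a change of $w$ by $\sym\nabla$ of a corrector). The second part consists of the remaining $d_*-d+1$ directions, which are cancelled by one-dimensional Kuiper-type corrugations, each placed in its own component of $v$. Concretely, we first decompose any positive definite matrix close to a fixed reference as $\sum_{i=1}^{d_*} a_i^2\, \eta_i\otimes\eta_i$ with a fixed family of unit vectors $\eta_i$ and smooth positive coefficients. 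A step adds a corrugation $\frac{a_i}{\lambda}\Gamma(\lambda x\cdot\eta_i)$ to the $i$-th component of $v$, together with the corresponding $w$-correction, so that $\frac12(\nabla v)^T\nabla v+\sym\nabla w$ increases by $a_i^2\eta_i\otimes\eta_i$ up to errors of order $\lambda^{-1}$. Since the components are distinct, no cross terms arise among the $\bar d$ active components. The remaining $d-1$ primitive directions must then be handled in a way that does not consume new codimension.

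The steps, in order, are as follows. (1) Perform a preliminary reduction: by mollification and a small multiple of $\Id$, assume $v,w$ are smooth and the defect $D=A-\frac12(\nabla v)^T\nabla v-\sym\nabla w$ is positive definite and bounded below on $\bar\omega$. (2) Prove a "stage" proposition. Given $(v,w)$ with defect $D$ satisfying $\|D\|_0\le\delta$ and $\|\nabla^2 v\|_0\le \delta^{1/2}\mu$, produce $(\tilde v,\tilde w)$ with $\|\tilde v-v\|_1\lesssim\delta^{1/2}$, new defect of size $\delta\,(\mu/\lambda)^{\gamma}$ plus mollification errors of order $\ell^{r+\beta}$, and $\|\nabla^2\tilde v\|_0\lesssim\delta^{1/2}\lambda$. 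Within a stage, we perform $\bar d$ steps of one-dimensional corrugations, each in a new component of $v$, using the standard Kuiper profile with decomposition into primitive metrics. Crucially, we use the trick of \cite{in_lew2}: the $d-1$ leftover directions are cancelled by decomposing $D$ in the form $D=\sum_{i\le \bar d} a_i^2\eta_i\otimes\eta_i+\sym(\nabla\Phi)$, where $\Phi$ solves a linear elliptic (or explicitly integrable) problem on the defect and is absorbed into $w$. Here we must verify that the symmetric matrices modulo the $d$-dimensional image of $\sym\nabla$ acting on suitable fields reduce the count to exactly $d_*-d+1$ (one extra unit is kept for positivity), with Schauder estimates $\|\Phi\|_{2,\beta}\lesssim\|D\|_{0,\beta}$ on a slightly enlarged domain. (3) Iterate the stage with $\delta_{q}=a^{-b^q}$ and $\lambda_q$ chosen as in the standard H\"older bookkeeping. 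Interpolating $\|v_{q+1}-v_q\|_{1,\alpha}\lesssim\delta_q^{1/2}\lambda_q^\alpha$, we obtain convergence in $\mathcal C^{1,\alpha}$ for every $\alpha<1$; this uses the improved error exponent $\gamma$, which we obtain by iterating the $\lambda^{-1}$ corrections $N$ times within each step, as in \cite{CDS}. The regularity of $A$ enters only through the mollification error, which yields the constraint $\alpha<(r+\beta)/2$. The $\mathcal C^0$ closeness follows from $\|v_{q+1}-v_q\|_0\lesssim\delta_q^{1/2}/\lambda_q$, whose sum is made less than $\epsilon$ by taking $a$ large.

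The main obstacle is step (2): cancelling a full $d_*$-dimensional defect with only $\bar d$ corrugations, while keeping $\Phi$ and its derivatives controlled so that the $w$-absorption does not destroy the H\"older bookkeeping. Reaching every $\alpha<1$, rather than the Nash--Kuiper threshold $1/(1+2d_*/\bar d)$, requires that each stage add no newly accumulating errors across the $\bar d$ steps. We would first try the approach of \cite{lew_conv}: diagonalize using the freedom in $w$ so that the remaining defect lies in a fixed $\bar d$-dimensional cone of primitive metrics with distinct directions placed in distinct components. We would then show that the cross error between consecutive steps is of lower order, $O(\lambda_{i}/\lambda_{i+1})$, which can be absorbed with a rapidly increasing frequency sequence inside the stage.
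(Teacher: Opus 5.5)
There is a genuine gap, in two places.

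\textbf{The gauge decomposition in step (2) does not exist in general.} Your count $\bar d=d_*-d+1$ rests on writing $D=\sum_{i\le\bar d}a_i^2\eta_i\otimes\eta_i+\sym\nabla\Phi$ with fixed $\eta_i$ and a tamely controlled $\Phi$. This fails, for instance, for every odd $d\ge3$.

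Set $E=\mathrm{span}\{\eta_i\otimes\eta_i\}_{i\le\bar d}$. You must solve $\langle Q,\sym\nabla\Phi\rangle=\langle Q,D\rangle$ for all $Q\in E^\perp$, where $\dim E^\perp\ge d-1$. For constant symmetric $Q$ one has $\langle Q,\sym\nabla\Phi\rangle=\mathrm{div}(Q\Phi)$. So the symbol at $\xi$ is $\Phi\mapsto(\langle Q\xi,\Phi\rangle)_{Q\in E^\perp}$, and it is onto for every $\xi\neq0$ only if every nonzero $Q\in E^\perp$ is invertible.

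For odd $d$ this is impossible. On any $2$-plane in $E^\perp$ the determinant is a binary form of odd degree, so it has a real zero; this gives $Q_0\neq0$ and $\xi_0\neq0$ with $Q_0\xi_0=0$. Then $Q_0\Phi\perp\xi_0$. Now match a component $\delta\cos(\mu\langle\xi_0,x\rangle)$ of $\langle Q_0,D\rangle$, and compare the flux of $Q_0\Phi$ through transverse discs at two levels $\pi/\mu$ apart. This forces $|\nabla\Phi|\gtrsim\delta\mu$, even after removing rigid motions. By Korn, $\sym\nabla\Phi$ cannot stay at scale $\delta$. Its $E$-part is dumped into the $a_i^2$, which then lose positivity and the bounds you need. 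Integrating explicitly along a coordinate shows the same loss of derivatives. The naive count ``$d_*$ minus $d$'' is realized by an elliptic gauge when $d=2$ (the conformal trick), but not in general.

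The paper never gauges the slowly varying defect, apart from the constant $H_0=\sym\nabla(H_0x)$. Instead, it puts all $d$ directions $\eta_{1j}$ into the \emph{first} codimension, at frequencies $\lambda_{1j}=\mu_k\sigma^j$. It treats the resulting errors $\frac{\Gamma(\lambda t_{\eta_{1j}})}{\lambda}H$ with Lemma \ref{lem_IBP}: against an oscillation along $\eta_{1j}$, $\sym\nabla$ acts as the algebraic map $L\mapsto\sym(L\otimes\eta_{1j})$. The range of that map is complementary to $\mathrm{span}\{\eta_{st}\otimes\eta_{st}:2\le s\le t\}$. The leftover components $P_i^j$ lie in that span and are absorbed into the amplitudes $b_{ij}$ of the corrugations in codimensions $2,\dots,\bar d$. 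That is where $1+(d_*-d)=\bar d$ comes from.

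\textbf{Your bookkeeping would not reach every $\alpha<1$, even granting (2).} Absorbing cross errors of size $O(\lambda_i/\lambda_{i+1})$ by rapidly increasing frequencies is the classical Nash--Kuiper mechanism. It keeps the ratio between the blow-up of $\|\nabla^2v\|_0$ and the decay of the defect bounded away from $0$. For example, the nonoscillatory mean $\lambda^{-2}\nabla a\otimes\nabla a$ of the last term in (\ref{step_err}) alone caps $\alpha$ at $1/2$. An untreated $\frac{\Gamma}{\lambda}a\nabla^2v^n$ caps it at $1/3$. Iterating the $\lambda^{-1}$ corrections $N$ times does not cure this: for a general symmetric $H$ only the $\sym(L\otimes\eta)$ part integrates by parts, and the rest stays at order $\lambda^{-1}$.

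The paper uses three ingredients your plan lacks:
\begin{itemize}
\item K\"all\'en's decomposition (Lemma \ref{lem_Kallen}), which absorbs $\lambda_{1j}^{-2}\nabla a_{1j}\otimes\nabla a_{1j}$ up to an error $\delta\sigma^{-2N}$.
\item $K$ sub-stages, with $N$-fold use of Lemma \ref{lem_IBP} in codimension $1$.
\item For $n\ge2$, the fact that component $n$ oscillates in the \emph{same} direction $\eta_{ij}$ at every sub-stage. Hence the large part of $\nabla^2v_k^n$ has the form $\sym(X\otimes\eta_{ij})$ and is integrated away through $\Theta_n$ with no remainder. The term involving $\nabla^2v_0^n$ is beaten by the frequency gap $\mu_{k+1}/\mu_0$ (Step 12).
\end{itemize}
Together these make the ratio $(dK+N)/(KN)$ tend to $0$, which is what gives every $\alpha<1$.
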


\smallskip

\noindent Recall that the Monge-Amp\`ere system, introduced in \cite{lew_conv},
seeks a $k$-dimensional solution $v$ to:
\begin{equation}\label{MA}
\left\{\begin{split}
& v:(\omega\subset\R^d)\to \R^k, 
\\ & \mathfrak{Det} \,\nabla^2v \doteq \big[\langle \partial_{is}
v, \partial_{jt}v\rangle -\langle \partial_{it}v, \partial_{js}v\rangle\big]_{i,j,
  s,t:1\ldots d}= F\quad\mbox{ in }\;\omega,
\end{split}\right.
\end{equation}
where $F:\omega\to\R^{d^4}$ is a given table field that satisfies
symmetry conditions (\ref{symF}), necessary for the resolvability of (\ref{MA}).
The first main reason why the system (\ref{MA}) is worthy of
attention, is that it provides the multi-dimensional version of the Monge-Amp\`ere equation:
\begin{equation}\label{MAeq}
\left\{\begin{split}
& \, v:(\omega\subset\R^2)\to \R, 
\\ & \det\nabla^2 v  \doteq \partial_{11}
v  \partial_{22} v - (\partial_{12} v)^2 = f\quad\mbox{ in }\;\omega,
\end{split}\right.
\end{equation} 
arising from the prescribed curvature problem. Indeed, the family of Riemannian metrics on $\omega$, generated by immersions $\{u^\epsilon=(id_d, \epsilon
v)\}_{\epsilon\to 0}$, have their Riemann curvatures satisfy:
\begin{equation*}
{Riem}\big((\nabla u^\epsilon)^T\nabla u^\epsilon\big)  =
{Riem}\big(\mbox{Id}_d + \epsilon^2(\nabla v)^T\nabla v\big) 
= \epsilon^2 \mathfrak{Det} \,\nabla^2v + o(\epsilon^2).
\end{equation*}
When $d=2$, $k=1$ as in (\ref{MAeq}), the above is consistent with the
formula for the Gaussian curvature of surfaces described as the 
graph of $\{\epsilon v\}_{\epsilon\to 0}$, namely:
$$\kappa(\epsilon v) = \frac{\epsilon^2 \det\nabla^2 v}{(1+\epsilon^2|\nabla
  v|^2)^2} = \epsilon^2 \det\nabla^2v + o(\epsilon^2),$$
which yields (\ref{MAeq}) as the prescription of Gaussian curvature.
The second reason to study (\ref{MA}), is related to its weak formulation, given in the form of the
following Von K\`arm\`an system:
\begin{equation}\label{VK}
\left\{\begin{split}
&\,  v:\omega\to \R^k, \quad w:\omega\to\R^d,
\\ & \frac{1}{2}(\nabla v)^T\nabla v + \sym \nabla w = A\quad\mbox{ in }\;\omega.
\end{split}\right.
\end{equation}
Here, one seeks a solution pair, consisting of
$k$-dimensional $v$ and $d$-dimensional $w$, for a given matrix field
$A:\omega\to\R^{d\times d}_\sym$ determined by $F$. 
The system (\ref{VK}) arises 
in the context of finding an isometric immersion $u$ of the given Riemannian metric
$g:\omega\to\R^{d\times d}_{\sym, >}$, into $\R^{d+k}$:
\begin{equation}\label{II}
\left\{\begin{split}
& \, u:(\omega\subset\R^d)\to \R^{d+k},
\\ & (\nabla u)^T\nabla u = g\quad\mbox{ in }\;\omega.
\end{split}\right.
\end{equation}
Indeed, (\ref{II}) yields (\ref{VK}) when equating the leading order terms in
the family of Riemannian metrics $\{\Id_d+2\epsilon^2 A\}_{\epsilon\to 0}$
and the metrics generated by the immersions $\{\bar u^\epsilon =
({id}_d+\epsilon^2w, \epsilon v)\}_{\epsilon\to 0}$:
$$(\nabla \bar u^\epsilon)^T\nabla \bar u^\epsilon = \mbox{Id}_d +
\epsilon^2 \big((\nabla v)^T\nabla v + 2\, \sym\nabla w\big) + o(\epsilon^2).$$
When $d=2$, $k=1$, the above derivation is consistent with 
the left hand side of (\ref{VK}) appearing in the stretching energy of
a thin film, subject to the out-of-plane displacement $v$ and the in-plane displacement $w$.  
Hence, the three problems (\ref{MA}), (\ref{VK}) and
(\ref{II}) are intrinsically related.

\bigskip

\noindent We remark that the results as in Theorem \ref{th_final} (as
well as Corollary \ref{th_weakMA} below) remain valid in any target space $\mathbb{R}^k$ replacing $\R^{\bar d}$, provided that $k \geq\bar d$. Since our proofs rely only on corrugations in $\bar d$ distinct directions, the same
construction of {\em stage} as in Theorem \ref{thm_stage} applies.
Another observation is that by extending our proofs - at the expense of additional technical constructions outlined in Section \ref{sec_iso_imm} - analogous full flexibility results hold in the setting of isometric immersions, that is for solutions of system (\ref{II}), posed on $\omega \subset \mathbb{R}^d$ and with target dimension $d + \bar d = d_* + 1$, as stated in Theorem \ref{th_final2}. In particular, this establishes the local existence of $\mathcal{C}^{1,1-}$-regular isometric immersions of $d$-dimensional $\mathcal{C}^2$ Riemannian metrics into the $(d_*+1)$-dimensional Euclidean space. By using techniques of \cite{CDS}, the result can then be extended to compact manifolds, in codimension $k\geq (d+1)d_*-d+1$.

\medskip

\noindent As a consequence and in light of \cite[Theorem 1.8, Theorem 1.10]{lew_conv}, we further obtain: 

\begin{corollary}\label{th_weakMA} 
Let $F=[F_{ij, st}]_{i,j,s,t=1\ldots
  d}\in L^\infty(\omega, \R^{d^4})$ defined on an open, bounded, contractible
domain $\omega\subset\mathbb{R}^d$ with Lipschitz boundary, satisfy, for all $i,j,s,t,q=1\ldots d$:
\begin{equation}\label{symF}
\begin{split}
& F_{ij,st} = - F_{ji,st} = -F_{ij, ts}, \qquad F_{ij, st} = F_{st,ij},
\\ & F_{ij, st}+F_{is,tj} + F_{it,js} = 0, 
\\ & \partial_qF_{ij,st} + \partial_sF_{ij,tq} + \partial_tF_{ij,qs} = 0
\quad \mbox{ in the sense of distributions on }\; \omega. 
\end{split}
\end{equation}
Then, for any exponent $\alpha\in
(0,1)$, the set of $\mathcal{\mathcal{C}}^{1,\alpha}(\bar\omega,
\R^{\bar d})$-regular weak solutions to:
(\ref{MA}), is dense in $\mathcal{\mathcal{C}}^0(\bar\omega, \R^{\bar d})$. 
Namely, every $v\in \mathcal{\mathcal{C}}^0(\bar\omega,\R^{\bar d})$ is the
uniform limit of some sequence
$\{v_n\in\mathcal{\mathcal{C}}^{1,\alpha}(\bar\omega,\R^{\bar d})\}_{n=1}^\infty$, such that: 
$\mathfrak{Det}\,\nabla^2v= F$ on $\omega$, for all $n\geq 1$.
\end{corollary}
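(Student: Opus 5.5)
The corollary follows from Theorem \ref{th_final} in the same way as in \cite[Theorem 1.8, Theorem 1.10]{lew_conv}, with the codimension $2d_*$ replaced by $\bar d$. The argument has three steps: pass from $F$ to a matrix field $A$, perturb an arbitrary continuous $v$ to a strict subsolution, and apply Theorem \ref{th_final}.

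\textbf{Step 1: from $F$ to $A$.} Because $\omega$ is contractible with Lipschitz boundary and $F\in L^\infty$ satisfies (\ref{symF}), the compatibility result of \cite{lew_conv} gives a matrix field $A\in W^{2,p}(\omega,\R^{d\times d}_\sym)$ for every $p<\infty$ with
$$-\mathrm{curl}\,\mathrm{curl}\, A = F.$$
Here $-\mathrm{curl}\,\mathrm{curl}$ is the linearized Riemann curvature operator, so that $(\nabla v)^T\nabla v+2\,\sym\nabla w = 2A$ forces $\mathfrak{Det}\,\nabla^2 v = F$ in the weak sense. The symmetries in (\ref{symF}) are the algebraic Bianchi identities and the differential Bianchi identity is the Saint-Venant-type compatibility condition; together they make this inversion possible. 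One may extend $A$ to a $W^{2,p}$ field on a neighbourhood of $\bar\omega$. By Sobolev embedding, $A\in\mathcal{C}^{1,\beta}(\bar\omega)$ for every $\beta<1$, so $r=1$. The range (\ref{VKrange}) then becomes $\alpha<\min\{(1+\beta)/2,1\}$. Choosing $\beta$ close to $1$ covers every $\alpha\in(0,1)$.

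\textbf{Step 2: a strict subsolution near $v$.} Fix $v\in\mathcal{C}^0(\bar\omega,\R^{\bar d})$ and $\epsilon>0$. Mollify to get a smooth $v_0$ with $\|v_0-v\|_0\le\epsilon/2$, and set $w_0(x)=-Mx$ for a constant $M$. Then
$$\tfrac12(\nabla v_0)^T\nabla v_0+\sym\nabla w_0 = \tfrac12(\nabla v_0)^T\nabla v_0 - M\,\Id_d,$$
which is strictly below $A$ on $\bar\omega$ once $M$ exceeds $\|\nabla v_0\|_0^2+\|A\|_0$. The size of $w_0$ plays no role, since only $v$ is approximated in the statement.

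\textbf{Step 3: apply Theorem \ref{th_final} and check weak solvability.} Theorem \ref{th_final} yields $\tilde v\in\mathcal{C}^{1,\alpha}$ with $\|\tilde v-v_0\|_0\le\epsilon/2$, together with $\tilde w$, solving (\ref{VK}) with the field $A$. Hence $\|\tilde v-v\|_0\le\epsilon$. It remains to check that $\tilde v$ is a weak solution of (\ref{MA}), where $\mathfrak{Det}\,\nabla^2\tilde v$ is defined distributionally as
$$-\mathrm{curl}\,\mathrm{curl}\big(\tfrac12\nabla\tilde v^T\nabla\tilde v\big).$$
This expression is well defined for $\mathcal{C}^1$ (indeed $W^{1,2}$) maps and agrees with the classical formula for smooth maps. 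Applying $-\mathrm{curl}\,\mathrm{curl}$ to the identity $\tfrac12\nabla\tilde v^T\nabla\tilde v+\sym\nabla\tilde w=A$ annihilates $\sym\nabla\tilde w$. We are left with $\mathfrak{Det}\,\nabla^2\tilde v=-\mathrm{curl}\,\mathrm{curl}\,A=F$ in $\mathcal{D}'(\omega)$.

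The main obstacle is Step 1, namely constructing $A$ with enough regularity, $\mathcal{C}^{1,\beta}$ with $\beta$ arbitrarily close to $1$, so that all $\alpha<1$ are reached. This is exactly where the contractibility, the Lipschitz boundary and the $L^\infty$ hypothesis enter, and I would take it directly from \cite[Theorem 1.8]{lew_conv}. Once it is in hand, the rest is a formal application of Theorem \ref{th_final}.
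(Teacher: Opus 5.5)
Your proposal is correct and matches the paper's route: the paper gives no separate proof and obtains the corollary by combining Theorem \ref{th_final} with \cite[Theorem 1.8, Theorem 1.10]{lew_conv}, which supply a field $A\in W^{2,p}\subset\mathcal{C}^{1,1-d/p}$ with $-\mathrm{curl}\,\mathrm{curl}\,A=F$ and the weak formulation in which $\sym\nabla\tilde w$ is annihilated. Your strict subsolution $(v_0,-Mx)$ and your choice of $p$ depending on $\alpha$, so that $\alpha<1-\frac{d}{2p}$, are exactly the details needed to reach every $\alpha\in(0,1)$.
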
 

\medskip

\subsection{Comparison with the literature} Our present result, in
addition to addressing the well-posedness of the geometrical
system (\ref{MA}), contributes to the growing body of work on the
flexibility of $\mathcal{C}^{1,\alpha}$ isometric immersions and
solutions to related PDEs. 

\medskip

\noindent The study of flexibility in the context of the system (\ref{II}) originates from the
classical Nash-Kuiper theorem \cite{Nash1, Kuiper} about
$\mathcal{C}^1$ isometric immersions of a $d$-dimensional Riemannian manifold
into $\mathbb{R}^{d+1}$ (in our terminology, this corresponds to
codimension $k=1$). The H\"older-regular isometric
immersions were studied by Borisov \cite{Borisov1958,Borisov1965}, who conjectured that
the Nash-Kuiper theorem extended to $\mathcal{C}^{1,\alpha}$-regularity for
$\alpha < (1 + d^2 + d)^{-1}$ and $\alpha < 1/5$ for $d = 2$.  
He provided a proof for the exponent $\alpha < 1/7$, dimension $d=2$
and the analytic metric $g$ in \cite{Borisov2004}. 
Borisov's conjecture was confirmed by Conti, De Lellis and Sz\'ekelyhidi
in \cite{CDS}, whereas the case
$\alpha < 1/5$, $d = 2$ was resolved by De Lellis, Inauen and Sz\'ekelyhidi in \cite{DIS1/5}.  
These results were generalized to compact manifolds by Cao and
Sz\'ekelyhidi in \cite{CaoSze2022}. Locally, it is very recently improved to 
$\alpha < (d^2 - d + 1)^{-1}$ by Cao, Hirsch and Inauen in \cite{CHI2}
and $\alpha<(2d-1)^{-1}$ by Inauen in \cite{In2026}.

\medskip

\noindent If the codimension $k$ is larger than $1$, one can construct more regular isometric
immersions. In this direction, Nash first showed \cite{Nash2} that any
manifold $d$-dimensional Riemannian manifold with metric $g \in
\mathcal{C}^r$, $r \geq 3$, admits a
$\mathcal{C}^r$-regular isometric immersion into $\mathbb{R}^{d+k}$
for sufficiently large $k$.  
The codimension bounds were later established and refined by Gromov
\cite{GromovPdr} and G\"unther \cite{Guenther}, while the case of  $g \in
\mathcal{C}^{r,\beta}$ with $r+\beta>2$, was was treated by Jacobowitz \cite{Jaco}.
When the metric's regularity is lower, i.e. $
r + \beta<2$, K\"all\'en \cite{Kallen} constructed a
$\mathcal{C}^{1,\alpha}$ immersion for any $\alpha < (r + \beta)/2$
when $k \geq 2d_*(d+1)$ is sufficiently large. 
See also \cite{DI2020,CaoIn2024,CaoSz2025} for further
results about isometric immersions in high codimension. 
Recently, Lewicka proved in  \cite{lew_full2d2k} that in dimension
$d=2$ and codimension $k\geq 2$, every
subsolution to (\ref{II}) with the metric $g\in\mathcal{C}^{r,\beta}$
admits an approximate sequence of isometric
immersions  with regularity $\mathcal{C}^{1,\alpha}$, for all
$\alpha<\min\{(r+\beta)/2, 1\}$. 
This property is referred to as {\em full flexibility}. In the present
paper, we extend this result to arbitrary dimension $d\geq 2$ and the corresponding codimension 
$\bar d$.

\medskip

\noindent The parallel version of the Nash-Kuiper scheme as in the papers
cited above, was first developed to handle 
flexibility of $\mathcal{C}^{1,\alpha}$ weak solutions to the 
Monge-Amp\`ere equation (\ref{MAeq}) by Lewicka and Pakzad in
\cite{lewpak_MA}, where the result was proved for
$\alpha<1/7$. This regularity exponent was later increased to
$\alpha<1/5$ and $\alpha<1/3$ by Cao and Szekelyhidi \cite{CS}, and
by Cao, Hirsch and Inauen \cite{CHI}, respectively. Closer to our present context,
in \cite{lew_conv} Lewicka introduced the Monge-Amp\`ere system
(\ref{MA}) and obtained flexibility of its solutions for any 
$\alpha<\min\{\beta/2, (1+2d_*/k)^{-1}\}$ given 
$A\in\mathcal{C}^{0,\beta}$, where the dimension $d$ and $k$ were arbitrary;
and also for any $\alpha<\min\{\beta/2, 1\}$ provided that $k\geq 2d_*$.
For the special case $d=2$, weak solutions of progressively higher
regularity, depending on the value of $k$, were obtained in the sequence of
papers \cite{lew_improved, lew_improved2, in_lew}. 
In \cite{in_lew2}, Inauen and Lewicka established the full flexibility
of the Monge-Amp\'ere system for $d=k=2$, serving as a direct precursor
to \cite{lew_conv} and to the present paper. 

\medskip

\noindent We also note that the convex integration technique has been
successfully applied to address Onsager’s conjecture for the Euler
equations, as well as to other fluid mechanics equations (see
\cite{BDSV, Isett, BV2021} and references therein). 

\medskip

\subsection{Overview of the strategy of proofs} \label{sub_over}

Our main technical contribution is the following {\em stage} construction.
whose iteration via the Nash-Kuiper algorithm in Theorem \ref{th_NK} ultimately provides the proof of Theorem \ref{th_final}. 
To ease the notation, we define the {\em defect}
relative to a matrix field $A$ and vector fields $v$ and $w$:
$$\mathcal{D}(A, v, w) \doteq A - \big(\frac{1}{2}(\nabla v)^T\nabla v +
\sym\nabla w\big).$$
 
\begin{theorem}\label{thm_stage} 
Let $\omega\subset\R^d$ be an open, bounded, $d$-dimensional set. Fix
two integers $N, K\geq 1$. There exists $\sigma_0\geq 1$ depending
only on $\omega, N, K$, such that the following holds.
Given $v\in\mathcal{C}^2(\bar\omega,\R^{\bar d})$, 
$w\in\mathcal{C}^2(\bar\omega,\R^{d})$, and 
$A\in\mathcal{C}^{r,\beta}(\bar\omega,\R^{d\times
  d}_\sym)$ with $r\in \{0,1\}$, $\beta\in (0,1]$, satisfying:
\begin{equation}\label{defect0}
0< \|\mathcal{D}\|_0\leq 1 \quad \mbox{where} \quad
\mathcal{D}=\mathcal{D}(A, v, w),
\end{equation}
and given two positive constants $\mathcal{M}, \sigma$ such that:
\begin{equation}\label{Assu}
\mathcal{M}\geq\max\{\|v\|_2, \|w\|_2, 1\},\quad \sigma\geq \sigma_0,
\end{equation}
there exist $\tilde v\in\mathcal{C}^2(\bar \omega,\R^{\bar d})$,
$\tilde w\in\mathcal{C}^{2}(\bar\omega, \R^d)$,
such that the following bounds are valid:
\begin{align*}
& \|\tilde v - v\|_1\leq C\|\mathcal{D}\|_0^{1/2}, \qquad\qquad
\|\tilde w -w\|_1\leq C\|\mathcal{D}\|_0^{1/2} \big(1+ \|\nabla v\|_0\big), 
\vspace{3mm} \tag*{(\theequation)$_1$}\refstepcounter{equation} \label{Abound12}\\
& \|\nabla^2\tilde v\|_0\leq C \mathcal{M}\sigma^{dK+N}, \hspace{12.5mm}
\|\nabla^2\tilde w\|_0\leq C \mathcal{M}\sigma^{dK+N} \big(1+\|\nabla v\|_0\big),  
\vspace{3mm} \tag*{(\theequation)$_2$}\label{Abound22} \\ 
& \displaystyle{\|\mathcal{D}(A, \tilde v, \tilde w)\|_0\leq
  C\Big(\frac{\|A\|_{r,\beta}}{\mathcal{M}^{r+\beta}}\|\mathcal{D}\|_0^{(r+\beta)/2}
  + \frac{\|\mathcal{D}\|_0}{\sigma^{KN}}\Big).}
\tag*{(\theequation)$_3$} \label{Abound32}
\end{align*}
Above, the constants $C$ depend only on $\omega, r,\beta$ and $N, K$.
\end{theorem}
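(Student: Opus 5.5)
Following the K\"all\'en-type scheme of \cite{lew_conv, in_lew2}, the plan is to first mollify and then cancel the defect in $K$ successive rounds, each round removing a fraction of the remaining error. The exponents $\sigma^{dK+N}$ and $\sigma^{-KN}$ in the statement suggest this structure: each round consists of $d$ frequency steps combined with $N$ orders of smoothing, and each round gains a factor $\sigma^{-N}$.

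\textbf{Step 1: mollification.} Mollify $v,w,A$ at scale $l=\|\mathcal{D}\|_0^{1/2}/\mathcal{M}$. The standard commutator estimate gives
\[
\|\mathcal{D}(A,v_l,w_l)-\mathcal{D}\ast\phi_l\|_0\leq C\big(l^2\mathcal{M}^2+l^{r+\beta}\|A\|_{r,\beta}\big).
\]
This accounts for the first term in \ref{Abound32} after adding a small multiple of $\|\mathcal{D}\|_0\Id_d$ to keep the defect positive definite. The mollified fields then obey $\|\nabla^{m+2}v_l\|_0\leq C\mathcal{M}l^{-m}$.

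\textbf{Step 2: decomposition of the defect.} Decompose the (shifted) positive definite defect as
\[
\mathcal{D}_l=\sum_{i=1}^{d_*}a_i^2\,\eta_i\otimes\eta_i ,
\]
with fixed directions $\eta_i$ and smooth coefficients $a_i\sim\|\mathcal{D}\|_0^{1/2}$. This is where the codimension $\bar d=d_*-d+1$ enters. In full codimension one would use one normal direction per $\eta_i$. Here, after performing the $d$ primitive corrugations $\eta_i\otimes\eta_i$ whose coefficients are absorbed by the $w$-component, I would split off $d-1$ terms. These can be handled by the sym-gradient part: a choice of coordinates makes a diagonal/off-diagonal part of rank structure expressible as $\sym\nabla$ of something plus a residual. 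That leaves $\bar d$ genuine rank-one terms, each corrugated in its own orthogonal component $e_i$ of $\R^{\bar d}$.

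\textbf{Step 3: rank-one corrugations.} For each term, add
\[
v\mapsto v+\frac{a_i}{\lambda_i}\Gamma_1(\lambda_i\langle x,\eta_i\rangle)e_i,\qquad w\mapsto w+(\cdots),
\]
with the usual periodic profiles $\Gamma_1,\Gamma_2$. The frequencies are increasing, $\lambda_i=\lambda_{i-1}\sigma$, starting from $\lambda_0 = 1/l$. Using separate components $e_i$ removes all cross terms $\nabla\Gamma^i\otimes\nabla\Gamma^j$. The remaining error is of order $\|\mathcal{D}\|_0\,\lambda_{i-1}/\lambda_i$ plus derivative-of-amplitude terms.

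\textbf{Step 4: iteration to gain $\sigma^{-KN}$.} Instead of accepting this error, iterate $N$ times per direction in the manner of K\"all\'en. The new error is itself a sum of terms of the form $\lambda^{-1}\nabla(\text{amplitude})\cdot(\text{periodic function})$. These can be rewritten as $\sym\nabla$ of an explicit correction plus a term smaller by a factor $\mu/\lambda$, where $\mu$ is the amplitude frequency. Repeating $N$ times yields error $\|\mathcal{D}\|_0\sigma^{-N}$ at frequency cost $\sigma$ per step. Doing $K$ such rounds over the $d$ steps gives total frequency $l^{-1}\sigma^{dK+N}$, hence the bound \ref{Abound22}, while the $C^1$ increments stay $\lesssim\|\mathcal{D}\|_0^{1/2}$, giving \ref{Abound12}.

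\textbf{Main obstacle.} The hardest step is Step 2: reducing $d_*$ directions to $\bar d$ normal components, i.e.\ showing that $d-1$ of the rank-one defect terms can be eliminated via $\sym\nabla w$ alone without losing positivity or the error bounds. I would try to exploit the fact that in a fixed frame the off-diagonal entries $\mathcal{D}_{1j}$, $j\geq2$, can be absorbed by solving an ODE-type problem $\partial_1 w_j+\partial_j w_1=2\mathcal{D}_{1j}$. This absorption must be done at each corrugation stage with controlled derivatives.
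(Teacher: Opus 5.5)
There is a genuine gap, and it sits in Step 2, which is exactly where the codimension $\bar d$ has to be earned. No component of a generic defect can be removed by $\sym\nabla w$ alone with bounds uniform in the frequency. Your ODE choice $w_1=0$, $\partial_1w_j=2\mathcal{D}_{1j}$ does kill the $(1,j)$ entries. But it writes $-\frac12(\partial_jw_l+\partial_lw_j)$ into the block $j,l\geq2$, and these entries have size $\sim\mathrm{diam}(\omega)\|\nabla\mathcal{D}\|_0\sim\delta\mu$, where $\mu\geq\mu_0=\mathcal{M}/\delta_0^{1/2}$. The remaining defect is then neither $O(\delta)$ nor positive definite.

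This is not an artifact of your particular ODE. For $d\geq3$, test the Saint-Venant identity $\partial_{33}E_{12}+\partial_{12}E_{33}-\partial_{23}E_{13}-\partial_{13}E_{23}=0$, valid for $E=\sym\nabla w$, against $\chi(x)\cos(\mu x_3)$. It shows that $E_{12}=\delta\cos(\mu x_3)$ forces $\|E\|_0\gtrsim\delta\mu$. Likewise, $a^2\eta\otimes\eta=\sym\nabla w$ forces $\nabla^2(a^2)$ to vanish on $\eta^\perp$.

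The only terms $\sym\nabla$ absorbs without loss are those that oscillate at a frequency $\lambda$ much larger than that of their amplitude, in a direction $\eta$, \emph{and} have the form $\sym(L\otimes\eta)$. So Step 4 is also false as stated. For $\lambda^{-1}\Gamma(\lambda t_\eta)H$, only the $d$-dimensional component $\sym(L\otimes\eta)$ of $H$ can be integrated by parts. The complementary $(d_*-d)$-dimensional component (the $P_i^j$ of Lemma \ref{lem_IBP}) cannot be integrated. The mean-one part $\lambda^{-2}\nabla a\otimes\nabla a$ of (\ref{step_err}) cannot be integrated at all. Your frequency count does not close either: $\bar d$ corrugations per round at $\lambda_i=\sigma\lambda_{i-1}$ cost $\sigma^{\bar d}$ per round, not $\sigma^{d}$.

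The missing idea is to \emph{reuse} one codimension rather than eliminate directions. The paper corrugates all $d$ directions $\eta_{1j}=(e_1+e_j)/|e_1+e_j|$ in the single component $e_1$, at frequencies $\lambda_{1j}=\mu_k\sigma^j$. The interaction error $\lambda_{1j}^{-1}\Gamma(\lambda_{1j}t_{\eta_{1j}})a_{1j}\nabla^{(2)}V^1_{1,j-1}$ has relative size $\sigma^{-1}$. Lemma \ref{lem_IBP} splits it along $\R^{d\times d}_\sym=\{\sym(L\otimes\eta_{1j})\}\oplus\mathrm{span}\{\eta_{st}\otimes\eta_{st}:2\leq s\leq t\}$.
\begin{itemize}
\item The first part is moved into $w$ by $N$ integrations by parts, each gaining a factor $\sigma^{-1}$.
\item The second part, $G_{1j}=O(\delta_k/\sigma)$, is absorbed into the amplitudes $b_{ij}=(a_{ij}^2-c_{ij})^{1/2}$ of the $d_*-d$ directions $\eta_{ij}$, $2\leq i\leq j$.
\item Those directions are then corrugated, each in its own codimension $n=2\ldots\bar d$, all at one common frequency $\mu_{k+1}$.
\end{itemize}
This is where $1+(d_*-d)=\bar d$ comes from, and why $\delta_k^{1/2}\mu_{k+1}$ grows only by $\sigma^d$ per round.

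The terms $\lambda_{1j}^{-2}\nabla a_{1j}\otimes\nabla a_{1j}$ are pre-absorbed by K\"all\'en's decomposition (Lemma \ref{lem_Kallen}). For $n\geq2$, the analogous term $\mathcal{G}$ is simply small, thanks to the gap $\mu_{k+1}/\lambda_{k+1}\geq\sigma^{N/2}$.

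Finally, the codimensions $n\geq2$ are reused in all $K$ rounds, so the error $\mu_{k+1}^{-1}\Gamma(\mu_{k+1}t_{\eta_{ij}})b_{ij}\nabla^{(2)}v_k^n$ contains all earlier corrugations. It is controlled only because they all oscillate in the same direction $\eta_{ij}$. Their dangerous parts therefore have the form $\sym(\cdot\otimes\eta_{ij})$ and are integrated by the corrector $\Theta_n$. With these ingredients one obtains $\delta_{K-1}^{1/2}\mu_K=\mathcal{M}\sigma^{dK+N}$ and $\delta_K=\delta_0\sigma^{-KN}$.
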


\smallskip

\noindent We emphasize that, compared to the existing literature, we
are able to reduce the magnitude of the defect at the expense of a
smaller blow-up rate for the second derivatives of the subsolutions in
\ref{Abound22}–\ref{Abound32}. We now briefly outline how this is achieved.

\medskip

\noindent In the course of the proof, we inductively define $K$
intermediate subsolutions ${(v_k, w_k)}_{k=1}^K$. The initial pair
$(v_0, w_0)$ is constructed via a mollification of the given $(v,w)$,
while the final pair satisfies $(\tilde v, \tilde w) = (v_K, w_K)$. 
The construction of $(v_{k+1}, w_{k+1})$ from $(v_k, w_k)$ is carried
out by successively adding {\em Kuiper's corrugations}, as introduced
in Lemma \ref{lem_step2}, with the aim of canceling the $d_*$ rank-one
{\em primitive defects} of the form $a^2 \eta \otimes \eta$. These
primitive defects arise from the decomposition of 
$\mathcal{D}(A_0, v_k, w_k)$, see Lemma \ref{lem_dec_def}, where $A$
is replaced by its mollified version $A_0$, yielding the first term on
the right-hand side of the estimate \ref{Abound32}. 

\medskip

\noindent First, codimension $n=1$ is used to cancel the primitive
defects corresponding to $\eta = \eta_{1j}$ for all $j=1\ldots d$,
through a higher tier of intermediate subsolution pairs ${(V_{1j},
  W_{1j})}_{j=1}^d$. Here, Lemma \ref{lem_IBP} is applied $N$ times to
reduce the order of the resulting error terms. A prior application of
{\em K\"allen's iteration}, see Lemma \ref{lem_Kallen}, allows one to
additionally absorb into $\mathcal{D}$ those portions of the error
that cannot be handled by Lemma \ref{lem_IBP}. 
The subsequent codimensions $n=2 \ldots\bar d$ are 
used to consecutively cancel the remaining primitive defects, namely
those corresponding to $\eta = \eta_{ij}$ with $2 \leq i \leq j \leq d$, 
thereby constructing the next intermediate subsolution pairs ${(V_n, W_n)}_{n=2}^{\bar d}$.  
The main observation is that the first error term on the right-hand
side of (\ref{step_err}), which involves $\nabla^2 V_{n-1}^n$, can be
written as a sum of terms of the same structure, now involving only
$\nabla^2 v_0$, together with an expansion of oscillatory
contributions remaining from previous induction steps. All these terms oscillate
in a fixed direction and can therefore be further decomposed and
reduced by an argument similar to (\ref{ibp}) in Lemma \ref{lem_IBP}, but without the $P_i^j$
components. This observation constitutes the main new ingredient of our analysis. 
We then conclude the induction step by setting $(v_{k+1}, w_{k+1}) =(V_{\bar d}, W_{\bar d})$. 

\medskip

\noindent With Theorem \ref{thm_stage} at hand, Theorem \ref{th_final}
follows via a further iterative procedure, referred to as the {\em
  Nash-Kuiper algorithm}, see Theorem \ref{th_NK}. We briefly recall
its main idea, in which the solution $(\tilde v, \tilde w)$ to
(\ref{VK}) is obtained as the limit of the sequence ${(\tilde v_n,
  \tilde w_n)}_{n=1}^\infty$, generated through repeated applications
of Theorem \ref{thm_stage}. 
Along this iteration, the decay rate of $\|\tilde v_n - \tilde
v_{n-1}\|_1$ is roughly $\sigma^{-nKN/2}$, as follows from
\ref{Abound12} and \ref{Abound32}. Combining this with the blow-up
rate $\sigma^{n(dK+N)}$ of $\|\nabla^2 \tilde v_n\|_0$ in
\ref{Abound22}, and applying an interpolation inequality in H\"older
spaces, we obtain a bound on the corresponding ${\mathcal C}^{1,\alpha}$ norm: 
\begin{equation*}
\|\tilde v_n-\tilde v_{n-1}\|_{1,\alpha}\leq C\sigma^{n\big(\alpha(dK+N+KN/2)-KN/2\big)}.
\end{equation*}
For the sequence $\{\tilde v_n\}_{n=1}^\infty$ to be Cauchy in
${\mathcal C}^{1,\alpha}$, the power of the relative frequencies
$\sigma$ along the iteration must be negative, which is equivalent to:
\begin{equation*}
\alpha< \alpha_0\doteq \frac{KN/2}{dK+N+KN/2} = \frac{1}{1+2r_{K,N}} 
\qquad\mbox{where }\quad r_{K,N} = \frac{dK+N}{KN}. 
\end{equation*}
We observe that $r_{K,N}$ is the ratio of the blow-up rate of
$\|\nabla^2 \tilde v\|_0$ to the decay rate of $\|\mathcal{D}(A, \tilde
v, \tilde w)\|_0$ (specifically, the portion corresponding to the
second term on the right-hand side of \ref{Abound32}), and that it can be made arbitrarily small:
\begin{equation*}
r_{K,N} \to 0 \qquad \mbox{as } N, K\to\infty,
\end{equation*}
which corresponds to $\alpha_0 = 1$. Since the H\"older regularity
obtained by iterating Theorem \ref{thm_stage} is ultimately limited
only by the regularity of $A$ and by $\alpha_0$, this yields the range
stated in (\ref{VKrange}). 
\smallskip

\subsection{Organization of the paper and notation} 
In Section \ref{sec_step}, we collect several preparatory statements used in
the proof of Theorem \ref{thm_stage}. The proof, along with the stage
construction, is carried out in Section \ref{sec_khamsa}. The Nash-Kuiper
scheme argument, which yields the proof of Theorem \ref{th_final} is recalled in
section \ref{sec4}. Finally, Section \ref{sec_iso_imm} outlines
the additional arguments needed to extend Theorem \ref{th_final} to
the isometric immersion system (\ref{II}). 

\medskip

\noindent By $\mathbb{R}^{d\times d}_{\sym}$ we denote the space of symmetric
$d\times d$ matrices, and $\mathbb{R}^{d\times d}_{\sym, >}$ stands for the set of such matrices which are positive definite. The space of H\"older continuous vector fields
$\mathcal{C}^{m,\alpha}(\bar\omega,\R^k)$ consists of restrictions of
all $f\in \mathcal{C}^{m,\alpha}(\mathbb{R}^d,\R^k)$ to the closure of
an open, bounded set $\omega\subset\R^d$. The
$\mathcal{C}^m(\bar\omega,\R^k)$ norm of this restriction is
denoted by $\|f\|_m$, while its H\"older norm in $\mathcal{C}^{m,
  \alpha}(\bar\omega,\R^k)$ is $\|f\|_{m,\alpha}$. By $\nabla^{(m)}f$
we denote the table-valued field of all partial derivatives of $f$ of order
$m$; in the introduction section, we kept the notation $\nabla^2f$ for the matrix of second derivatives. If $d=k$, the symmetric part of $\nabla f$ is: $\sym\nabla f = (\nabla f + (\nabla f)^T)/2$.
By $C$ we denote a universal constant that may change from line to
line, but it depends only on the specified parameters.

\section{The preparatory statements}\label{sec_step}

In this section, we collect several technical ingredients that will be
used in the proof of Theorem \ref{thm_stage}. 
The first lemma concerns the
convolution and commutator estimates from \cite{CDS}.

\begin{lemma}\label{lem_stima}
Let $\phi\in\mathcal{C}_c^\infty(\R^d,\mathbb{R})$ be a standard
mollifier that is nonnegative, radially symmetric, supported on the
unit ball $B(0,1)\subset\R^d$ and such that $\int_{\mathbb{R}^d} \phi \dx = 1$. Denote: 
$$\phi_l (x) \doteq \frac{1}{l^d}\phi(\frac{x}{l})\quad\mbox{ for all
}\; l\in (0,1], \;  x\in\R^d.$$
Then, for every $f,g\in\mathcal{C}^0(\mathbb{R}^d,\R)$, every
$m\geq 0$ and $r\in\{0,1\}$, $\beta\in (0,1]$, there holds:
\begin{align*}
& \|\nabla^{(m)}(f\ast\phi_l)\|_{0} \leq
\frac{C}{l^m}\|f\|_0,\tag*{(\theequation)$_1$}\vspace{1mm} \refstepcounter{equation} \label{stima1}\\
& \|f - f\ast\phi_l\|_0\leq l^{r
+\beta} \|f\|_{r,\beta},\tag*{(\theequation)$_2$} \vspace{1mm} \label{stima2}\\
& \|\nabla^{(m)}\big((fg)\ast\phi_l - (f\ast\phi_l)
(g\ast\phi_l)\big)\|_0\leq {C}{l^{2- m}}\|\nabla f\|_{0}
\|\nabla g\|_{0}, \tag*{(\theequation)$_3$} \label{stima4}
\end{align*}
with constants $C>0$ depending only on $d$ and $m$. 
\end{lemma}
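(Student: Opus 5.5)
The plan is to prove the three estimates by differentiating under the convolution and using the moment properties of $\phi$. Throughout, $\|\cdot\|_0$ denotes the supremum over $\R^d$.

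\emph{Estimate \ref{stima1}.} We write $\nabla^{(m)}(f\ast\phi_l)=f\ast\nabla^{(m)}\phi_l$ and observe that $\nabla^{(m)}\phi_l(x)=l^{-d-m}(\nabla^{(m)}\phi)(x/l)$. Changing variables gives $\|\nabla^{(m)}\phi_l\|_{L^1}=l^{-m}\|\nabla^{(m)}\phi\|_{L^1}$. Young's inequality then yields the bound with $C=\|\nabla^{(m)}\phi\|_{L^1}$, which depends only on $d$ and $m$.

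\emph{Estimate \ref{stima2}.} We use $\int\phi_l=1$ to write
$$f(x)-f\ast\phi_l(x)=\int\big(f(x)-f(x-y)\big)\phi_l(y)\,\mathrm{d}y .$$
For $r=0$, the Hölder seminorm gives $|f(x)-f(x-y)|\le [f]_\beta|y|^\beta\le \|f\|_{0,\beta}\,l^\beta$ on $\mathrm{supp}\,\phi_l\subset B(0,l)$. For $r=1$, we expand $f(x-y)=f(x)-\nabla f(x)\cdot y+R$, where $|R|\le [\nabla f]_\beta |y|^{1+\beta}$ by the mean value theorem applied to the difference $\nabla f(x-ty)-\nabla f(x)$. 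The linear term integrates to zero because $\phi$ is radially symmetric, so $\int y\,\phi_l(y)\,\mathrm{d}y=0$. This leaves $l^{1+\beta}[\nabla f]_\beta\le l^{1+\beta}\|f\|_{1,\beta}$, with constant $1$ as stated.

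\emph{Estimate \ref{stima4}.} This is the commutator estimate of \cite{CDS}, and it is the only step that needs an idea. We use the pointwise identity
$$(fg)\ast\phi_l-(f\ast\phi_l)(g\ast\phi_l)=\big((f-f_l)(g-g_l)\big)\ast\phi_l-(f-f_l)\ast\phi_l\,(g-g_l)\ast\phi_l ,$$
where $f_l\doteq f\ast\phi_l$ and the functions $f_l,g_l$ inside the brackets are evaluated at the point $x$ and frozen. Equivalently, for fixed $x$,
$$\int\big(f(x-y)-f_l(x)\big)\big(g(x-y)-g_l(x)\big)\phi_l(y)\,\mathrm{d}y-\int\big(f(x-y)-f_l(x)\big)\phi_l\,\mathrm{d}y\int\big(g(x-y)-g_l(x)\big)\phi_l\,\mathrm{d}y ,$$
and the second product vanishes. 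For $m=0$ this gives the bound $\le \sup_{|y|\le l}|f(x-y)-f_l(x)|\cdot\sup_{|y|\le l}|g(x-y)-g_l(x)|\le Cl^2\|\nabla f\|_0\|\nabla g\|_0$.

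For $m\ge1$ the frozen trick does not commute with differentiation, so we argue by the product rule instead. We write the commutator as $\int\big(f(x-y)-f(x)\big)\big(g(x-y)-g(x)\big)\phi_l(y)\,\mathrm{d}y-(f-f_l)(g-g_l)$, which one checks by expanding both terms. We then differentiate each term $m$ times in $x$, putting the derivatives on $\phi_l$ via $x-y=z$ in the first integral. This produces terms of the form $\int (f(z)-f(x))(g(z)-g(x))\nabla^{(m)}\phi_l(x-z)\,\mathrm{d}z$ together with terms in which derivatives fall on $f(x)$ or $g(x)$.

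The latter terms need care, since $f$ is only $\mathcal C^1$. The cleaner route, which I would follow, is to regularize: it suffices to prove the estimate for smooth $f,g$ and pass to the limit, since both sides are stable under approximation. For smooth $f,g$, we expand $\nabla^{(m)}$ of the commutator via Leibniz into sums of the form
$$\big(\nabla^{(a)}f\,\nabla^{(b)}g\big)\ast\nabla^{(m-a-b)}\phi_l-\big(\nabla^{(a)}f\ast\cdot\big)\big(\nabla^{(b)}g\ast\cdot\big).$$
It is simpler still to use the $m=0$ argument with $\phi_l$ replaced by its derivatives: the identity above with the frozen values and the moment structure gives, for each $m$, a bound $\|\nabla^{(m)}\phi_l\|_{L^1}\,l^2\|\nabla f\|_0\|\nabla g\|_0\le Cl^{2-m}\|\nabla f\|_0\|\nabla g\|_0$. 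The point is that every term ultimately involves two increments of size $l\|\nabla f\|_0$ and $l\|\nabla g\|_0$ integrated against a kernel of $L^1$ norm $l^{-m}$.

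The main obstacle is the bookkeeping of this last reduction. I expect to settle it by following \cite[Lemma 1]{CDS} verbatim.
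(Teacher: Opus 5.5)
The first two bounds are proved correctly, and so is the commutator bound at $m=0$. Your frozen-value identity is right, and so is the $2l\|\nabla f\|_0$ bound on $|f(x-y)-f\ast\phi_l(x)|$. The paper gives no proof of this lemma and simply imports it from \cite{CDS}, so these parts are the standard arguments. The genuine gap is the commutator bound for $m\geq 1$: what you write there is not a proof, and the reduction you call ``simpler still'' fails as stated. The derivative $\nabla^{(m)}$ of the commutator is $(fg)\ast\nabla^{(m)}\phi_l-\sum_a\binom{m}{a}(f\ast\nabla^{(a)}\phi_l)(g\ast\nabla^{(m-a)}\phi_l)$. It is not the $m=0$ expression with $\phi_l$ replaced by $\nabla^{(m)}\phi_l$. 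Moreover $\int\nabla^{(m)}\phi_l=0$, so for that expression both the vanishing of the ``second product'' and the invariance under adding constants are lost. Regularizing $f,g$ does not help either. After differentiating the CDS identity, the terms carrying $\nabla^{(a)}f(x)$ with $a\geq 2$ are not controlled by $\|\nabla f\|_0$ even for smooth $f$; they disappear only through a cancellation you never carry out. Deferring to \cite{CDS} matches what the paper does, but your own sketch in its place would not work.

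The missing idea is that the frozen trick does survive differentiation, provided you freeze at a fixed point rather than at the moving evaluation point. Fix $x_0\in\R^d$ and set $\tilde f=f-f(x_0)$, $\tilde g=g-g(x_0)$. Since $\int\phi_l=1$, the commutator is unchanged when constants are added to $f$ or $g$. Hence $(fg)\ast\phi_l-(f\ast\phi_l)(g\ast\phi_l)=(\tilde f\tilde g)\ast\phi_l-(\tilde f\ast\phi_l)(\tilde g\ast\phi_l)$ holds as an identity of functions on $\R^d$. You may therefore differentiate it and then evaluate at $x_0$: for $|\theta|=m$,
\begin{equation*}
\partial^\theta\big((fg)\ast\phi_l-(f\ast\phi_l)(g\ast\phi_l)\big)(x_0)=\int_{\R^d}\tilde f(y)\tilde g(y)\,\partial^\theta\phi_l(x_0-y)\,\mathrm{d}y-\sum_{\gamma\leq\theta}\binom{\theta}{\gamma}\partial^\gamma(\tilde f\ast\phi_l)(x_0)\,\partial^{\theta-\gamma}(\tilde g\ast\phi_l)(x_0).
\end{equation*}
On $B(x_0,l)$ you have $|\tilde f|\leq l\|\nabla f\|_0$ and $|\tilde g|\leq l\|\nabla g\|_0$, and $\|\partial^\theta\phi_l\|_{L^1}=Cl^{-m}$. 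So the integral is at most $Cl^{2-m}\|\nabla f\|_0\|\nabla g\|_0$. For the Leibniz terms, $|\tilde f\ast\phi_l(x_0)|=|f\ast\phi_l(x_0)-f(x_0)|\leq l\|\nabla f\|_0$. For $|\gamma|\geq1$, choose $i$ with $\gamma_i\geq 1$ and write $\partial^\gamma(\tilde f\ast\phi_l)=\partial_i f\ast\partial^{\gamma-e_i}\phi_l$, which gives $\|\partial^\gamma(\tilde f\ast\phi_l)\|_0\leq Cl^{1-|\gamma|}\|\nabla f\|_0$; the same holds for $g$. Every product is therefore bounded by $Cl^{1-|\gamma|}\,l^{1-(m-|\gamma|)}\|\nabla f\|_0\|\nabla g\|_0=Cl^{2-m}\|\nabla f\|_0\|\nabla g\|_0$. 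This covers all $m\geq0$ at once and needs only $f,g$ Lipschitz, so no regularization is required.
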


\medskip

\noindent The next lemma provides a decomposition of symmetric
matrices into linear combinations of rank-one {\em primitive matrices}.
The result is elementary and its proof is left to the reader. 

\begin{lemma}\label{lem_dec_def}
For all $i,j=1\ldots d$, define the unit vectors:
$$\eta_{ij}\doteq\frac{e_i+e_j}{|e_i+e_j|} \in\R^d.$$
Then $\{\eta_{ij}\otimes \eta_{ij}\}_{i,j=1\ldots d,\;i\leq j}$ is a basis of the
linear space $\R^{d\times d}_\sym$, so that:
$$H=\sum_{i\leq j}\bar a_{ij}(H) \eta_{ij}\otimes \eta_{ij}
\quad\mbox{ for all } H\in \R^{d\times d}_\sym,$$
where $\big\{\bar a_{ij}:\R^{d\times d}_\sym\to \R\big\}_{i,j=1\ldots d,\;i\leq j}$
are the linear coordinate functions. Moreover, denoting:
\begin{equation}\label{H_0}
H_0 \doteq \sum_{i\leq j}\eta_{ij}\otimes \eta_{ij},
\end{equation}
there exists $r_0\in (0, 1)$, depending only on $d$, such that:
$$|\bar a_{ij} (H)-1|\leq\frac{1}{2} \quad \mbox{ for all } H\in
B(H_0,r_0)\subset \R^{d\times d}_{\sym} \quad \mbox{and all }\; i \leq j.$$
\end{lemma}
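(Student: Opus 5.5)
The plan is to reduce everything to a dimension count, then a triangular change of basis, then continuity of the inverse map.

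\emph{Spanning and independence.} The family $\{\eta_{ij}\otimes\eta_{ij}\}_{i\leq j}$ has exactly $d_*=d(d+1)/2=\dim\R^{d\times d}_\sym$ elements. It is therefore enough to show that it spans. For $i=j$ we have $\eta_{ii}=e_i$, so $\eta_{ii}\otimes\eta_{ii}=e_i\otimes e_i$. For $i<j$ we have $|e_i+e_j|=\sqrt2$, and
$$\eta_{ij}\otimes\eta_{ij}=\frac12\big(e_i\otimes e_i+e_j\otimes e_j\big)+\frac12\big(e_i\otimes e_j+e_j\otimes e_i\big).$$
Solving for the off-diagonal part gives
$$e_i\otimes e_j+e_j\otimes e_i = 2\,\eta_{ij}\otimes\eta_{ij}-e_i\otimes e_i-e_j\otimes e_j .$$
Every element of the standard basis $\{e_i\otimes e_i\}_i\cup\{e_i\otimes e_j+e_j\otimes e_i\}_{i<j}$ thus lies in the span, and the family is a basis.

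\emph{Explicit coordinates.} Inverting the triangular relations above gives the coordinate functions directly. For $H\in\R^{d\times d}_\sym$ and $i<j$,
$$\bar a_{ij}(H)=2H_{ij}.$$
For the diagonal entries,
$$\bar a_{ii}(H)=H_{ii}-\sum_{j\neq i}H_{ij}.$$
To see this, insert these coefficients into the decomposition formula and compare entries. The off-diagonal entries match immediately. The diagonal entry $(i,i)$ equals $\bar a_{ii}+\frac12\sum_{j\neq i}\bar a_{ij}=H_{ii}$, as required. All the maps $\bar a_{ij}$ are linear, hence Lipschitz. Concretely,
$$|\bar a_{ij}(H)-\bar a_{ij}(H')|\leq C_d|H-H'|$$
with, for instance, $C_d=d$ in the operator or Frobenius norm.

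\emph{Local estimate.} By definition of $H_0$ and uniqueness of coordinates in a basis, $\bar a_{ij}(H_0)=1$ for all $i\leq j$. By linearity,
$$|\bar a_{ij}(H)-1|=|\bar a_{ij}(H-H_0)|\leq C_d|H-H_0|.$$
Choosing $r_0=\min\{1/(2C_d),1/2\}\in(0,1)$ then yields $|\bar a_{ij}(H)-1|\leq 1/2$ on $B(H_0,r_0)$, with $r_0$ depending only on $d$.

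There is no genuine obstacle. The only point needing care is the consistency check on the diagonal entries in the explicit inversion, and the dimension count avoids even that.
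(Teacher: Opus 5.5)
Your proof is correct. The paper gives no proof of this lemma (it calls it elementary and leaves it to the reader), and your argument is exactly the routine verification intended: spanning via $e_i\otimes e_j+e_j\otimes e_i=2\,\eta_{ij}\otimes\eta_{ij}-e_i\otimes e_i-e_j\otimes e_j$, the explicit coordinates $\bar a_{ij}(H)=2H_{ij}$ for $i<j$ and $\bar a_{ii}(H)=H_{ii}-\sum_{j\neq i}H_{ij}$, and then $|\bar a_{ij}(H)-1|=|\bar a_{ij}(H-H_0)|\leq C_d|H-H_0|$. Your explicit formulas also show that $\bar a_{1t}(P)=0$ for all $t$ exactly when the first row of $P$ vanishes, which is a convenient way to check property (ii) in Lemma \ref{lem_IBP}.
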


\medskip

\noindent As the next ingredient, we recall the {\em step} construction
from \cite[Lemma 2.1]{lew_conv}, in which a single codimension is used
to cancel one rank-one defect of the form $a(x)^2e_i\otimes e_i$.

\begin{lemma}\label{lem_step2}
Let $v\in \mathcal{C}^1(\R^d, \R^{\bar d})$, $w\in \mathcal{C}^1(\R^d,
\R^{d})$, $\lambda>0$ and $a\in \mathcal{C}^2(\R^d,\R)$ be given. Denote: 
$$\Gamma(t) \doteq 2\sin t,\quad \bar\Gamma(t) \doteq -\frac{1}{2}\sin (2t),\quad
\dbar\Gamma(t) \doteq \frac{1}{2}\cos (2t),\quad 
\tbar\Gamma(t) \doteq 1- \frac{1}{2}\cos(2t).$$
Fix a coordinate $n=1\ldots \bar d$ and a unit vector $\eta\in\R^d$. Using the notation:
$$t_\eta \doteq \langle x, \eta\rangle,$$
let $\tilde v\in \mathcal{C}^1(\R^d, \R^{\bar d})$, $\tilde w\in
\mathcal{C}^1(\R^d, \R^{d})$ be given by:
\begin{equation}\label{defi_per}
\begin{split}
& \tilde v \doteq v + \frac{\Gamma(\lambda t_\eta)}{\lambda} a(x) e_n,
\\ & \tilde w \doteq w -\frac{\Gamma(\lambda t_\eta)}{\lambda} a(x)\nabla v^n
+ \frac{\Gamma(\lambda t_\eta)}{\lambda} a(x)^2\eta + 
\frac{\Gamma(\lambda t_\eta)}{\lambda^2} a(x)\nabla a(x).
\end{split}
\end{equation}
Then, the following identity is valid on $\R^d$:
\begin{equation}\label{step_err}
\begin{split}
& \big(\frac{1}{2}(\nabla \tilde v)^T \nabla \tilde v + \sym\nabla \tilde w\big) - 
\big(\frac{1}{2}(\nabla v)^T \nabla v + \sym\nabla w\big) - a(x)^2\eta\otimes \eta
\\ & = -\frac{\Gamma(\lambda t_\eta)}{\lambda} a \nabla^2 v^n
+ \frac{\dbar\Gamma(\lambda  t_\eta) }{\lambda^2} a\nabla^2 a
+ \frac{\tbar\Gamma(\lambda  t_\eta)}{\lambda^2}\nabla a\otimes\nabla a.
\end{split}
\end{equation}
\end{lemma}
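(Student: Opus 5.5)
The identity \eqref{step_err} is a direct computation: expand both quadratic forms, differentiate each corrector in $\tilde w$, and collect terms according to the trigonometric profile multiplying them. Write $s\doteq\lambda t_\eta$. The one fact used throughout is the chain rule
$$\nabla\Big(\frac{f(s)}{\lambda}\Big) = f'(s)\,\eta \quad\mbox{ for any profile } f.$$
This is why each corrector carries a factor $1/\lambda$ or $1/\lambda^2$: differentiating the profile costs one power of $\lambda$ and produces the direction $\eta$.

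\emph{Quadratic term in $v$.} Since $\tilde v$ differs from $v$ only in the $n$-th component, we have
$$\nabla\tilde v^n=\nabla v^n+\xi,\qquad \xi\doteq\Gamma'(s)\,a\,\eta+\frac{\Gamma(s)}{\lambda}\nabla a .$$
Hence
$$\frac12(\nabla\tilde v)^T\nabla\tilde v-\frac12(\nabla v)^T\nabla v=\sym(\nabla v^n\otimes\xi)+\frac12\,\xi\otimes\xi .$$
The second term expands as
$$\frac12\Gamma'^2a^2\,\eta\otimes\eta+\frac{\Gamma\Gamma'}{\lambda}\,a\,\sym(\eta\otimes\nabla a)+\frac{\Gamma^2}{2\lambda^2}\,\nabla a\otimes\nabla a .$$

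\emph{Correctors in $\tilde w$.} Next compute $\sym\nabla$ of each corrector in $\tilde w$.

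The first corrector is $-\frac{\Gamma}{\lambda}a\nabla v^n$. Its symmetrized gradient is
$$-\Gamma' a\,\sym(\eta\otimes\nabla v^n)-\frac{\Gamma}{\lambda}\sym(\nabla a\otimes\nabla v^n)-\frac{\Gamma}{\lambda}\,a\nabla^2v^n .$$
The first two pieces cancel $\sym(\nabla v^n\otimes\xi)$ exactly. The third piece is the first error term in \eqref{step_err}.

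For the remaining two correctors, the coefficients are forced, so I write them with unknown profiles $f(s)\frac{a^2}{\lambda}\eta$ and $g(s)\frac{a\nabla a}{\lambda^2}$ and solve.

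\begin{itemize}
\item \emph{The $\eta\otimes\eta$ coefficient.} We need $\frac12\Gamma'^2+f'=1$. Since $\frac12\Gamma'^2=2\cos^2s=1+\cos2s$, this gives $f'=-\cos 2s$, so $f=\bar\Gamma$.
\item \emph{The mixed $\sym(\eta\otimes a\nabla a)/\lambda$ terms.} They come from three sources: $\Gamma\Gamma'=2\sin2s$, the product rule on $a^2$ giving $2f=-\sin2s$, and $g'$. Vanishing requires $g'=-\sin2s$, so $g=\dbar\Gamma$.
\item \emph{What remains at order $\lambda^{-2}$.} The term $\frac{\dbar\Gamma}{\lambda^2}a\nabla^2a$ survives. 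The $\nabla a\otimes\nabla a$ coefficient is
$$\frac{\Gamma^2}{2}+\dbar\Gamma=2\sin^2s+\frac12\cos2s=1-\frac12\cos2s=\tbar\Gamma .$$
\end{itemize}
This reproduces the right-hand side of \eqref{step_err} exactly.

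So the identity holds once the profiles in the last two correctors of \eqref{defi_per} are $\bar\Gamma$ and $\dbar\Gamma$, which is the reading of \cite[Lemma 2.1]{lew_conv}. The $\Gamma$ displayed there should be read in that sense: with $\Gamma$ itself the $\eta\otimes\eta$ coefficient would not equal $1$. There is no real obstacle beyond this. The only point needing care is the bookkeeping of the $1/\lambda$ cross terms, where the three contributions $\Gamma\Gamma'$, $2\bar\Gamma$ and $\dbar\Gamma{}'$ must cancel, and the trigonometric identities above verify that they do.
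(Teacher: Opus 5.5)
Your computation is correct and is the paper's argument: the paper proves (\ref{step_err}) by direct inspection using the three identities $\frac12(\Gamma')^2+\bar\Gamma'=1$, $\Gamma'\Gamma+2\bar\Gamma+\dbar\Gamma'=0$ and $\frac12\Gamma^2+\dbar\Gamma=\tbar\Gamma$, which are exactly the coefficient conditions you derive for the $\eta\otimes\eta$, mixed $\sym(\eta\otimes a\nabla a)/\lambda$, and $\nabla a\otimes\nabla a$ terms. You are also right that the last two correctors in (\ref{defi_per}) must carry the profiles $\bar\Gamma$ and $\dbar\Gamma$ rather than $\Gamma$; this is a typo in the statement, as those identities and the paper's later use of the construction in (\ref{VW1}) and (\ref{VWp}) confirm, and otherwise the $\eta\otimes\eta$ coefficient would not equal $1$.
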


\medskip

\noindent The proof of (\ref{step_err}) follows by direct inspection, in view of the identities:
$$ \frac{1}{2} (\Gamma')^2+ \bar\Gamma'=1, \qquad
\Gamma'\Gamma  + 2\bar\Gamma + \dbar\Gamma' =0, \qquad
\frac{1}{2}\Gamma^2 + \dbar\Gamma = \tbar\Gamma.$$
Next, we present the  {\em integration by parts} argument, put
forward in \cite{CHI2}. We remark that if $\Gamma_0$ below has the
general form $\alpha\sin(\beta t)$ or 
$\alpha \cos(\beta t)$, as is the case for the zero-mean periodic profiles $\Gamma$, $\bar\Gamma$
$\dbar\Gamma$ and $\tbar\Gamma -1$ in Lemma \ref{lem_step2}, then the
primitives $\Gamma_i$ defined in (\ref{recu_ibp}) are of the same form
$\frac{\alpha}{\beta^i}\sin(\beta t)$ or $\frac{\alpha}{\beta^i}\cos(\beta t)$. In particular, all their
derivatives are bounded, which ensures the uniformity of the estimates
in the proof of Theorem \ref{thm_stage}. 

\begin{lemma}\label{lem_IBP}
Given  $H\in\mathcal{C}^{k+1}(\R^d,\R^{d\times d}_\sym)$, $\lambda>0$,
$j=1\ldots d$ and $\Gamma_0\in\mathcal{C}(\R,\R)$, there holds:
\begin{equation}\label{ibp}
\begin{split}
\frac{\Gamma_0(\lambda t_{\eta_{1j}})}{\lambda}H = & \; (-1)^{k+1}\frac{\Gamma_{k+1}(\lambda
  t_{\eta_{1j}})}{\lambda^{k+2}} \sym\nabla L^{j}_k(H)  \\ & + \sym\nabla
\Big(\sum_{i=0}^k(-1)^i\frac{\Gamma_{i+1}(\lambda t_{\eta_{1j}})}{\lambda^{i+2}}L_i^j(H)\Big) 
+ \sum_{i=0}^{k}(-1)^i\frac{\Gamma_i(\lambda t_{\eta_{1j}})}{\lambda^{i+1}} P_i^j(H),
\end{split}
\end{equation}
where $t_{\eta_{ij}}=\langle x, \eta_{1j}\rangle$ with
$\eta_{1j} = \frac{e_1+e_j}{|e_1+e_j|}$. The functions $\Gamma_i\in
\mathcal{C}^{i}(\R,\R)$ satisfy the recursive definition: 
\begin{equation}\label{recu_ibp}
\Gamma_{i+1}' = \Gamma_{i}  \quad\mbox{ for all } \; i =0\ldots k,
\end{equation}
while $L_i^j(H)\in\mathcal{C}^{k+1-i}(\R^d,\R^d)$ and
$P_i^j(H)\in\mathcal{C}^{k+1-i}(\R^d,\R^{d\times d}_\sym)$ have the following properties:
\begin{itemize}
\item[(i)] the entries of $L_i^j(H)$ and $P_i^j(H)$ depend linearly on
  $H$ and consist of the linear combinations of the entries
  of $\nabla^{(i)}H$, 
\item[(ii)] $P_i^j(H)\in\mathrm{span}\big(\eta_{st}\otimes
  \eta_{st};\; 2\leq s\leq t\big)$, so that, recalling the
  decomposition in Lemma \ref{lem_dec_def}, there holds: $\bar
  a_{1t}(P_i^j(H))=0$ for all $t=1\ldots d$.
\end{itemize}
\end{lemma}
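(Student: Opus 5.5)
The plan is to prove the identity by induction on $k$. The whole argument rests on one pointwise algebraic decomposition together with the elementary product rule for oscillatory vector fields.

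\textbf{Step 1 (algebraic splitting).} Fix $j$ and write $\eta=\eta_{1j}$; note that $\langle \eta, e_1\rangle\neq 0$, including the case $j=1$ where $\eta=e_1$. I claim that
$$\R^{d\times d}_\sym = \{\sym(L\otimes\eta):\ L\in\R^d\}\oplus \mathrm{span}\big(\eta_{st}\otimes\eta_{st};\ 2\leq s\leq t\big).$$
The second space consists exactly of the symmetric matrices whose first row and column vanish, so its dimension is $(d-1)d/2$. The first space has dimension at most $d$, so the two dimensions add up to at most $d_*$, and it suffices to show the intersection is trivial. Suppose $\sym(L\otimes\eta)$ has vanishing first row. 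The $(1,1)$ entry is $L_1\eta_1=0$, so $L_1=0$. The $(1,k)$ entry is then $\frac12\eta_1L_k=0$, so $L=0$. Hence every $H$ splits uniquely as $H=\sym(L_0^j(H)\otimes\eta)+P_0^j(H)$, where $L_0^j$ and $P_0^j$ are linear maps with constant coefficients depending only on $d$ and $j$. In particular $\bar a_{1t}(P_0^j(H))=0$ for all $t$, by uniqueness of the coordinates in Lemma \ref{lem_dec_def}.

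\textbf{Step 2 (one integration by parts).} Let $\Gamma_1'=\Gamma_0$. For a $\mathcal{C}^1$ vector field $L$, the chain rule gives
$$\sym\nabla\Big(\frac{\Gamma_1(\lambda t_\eta)}{\lambda^2}L\Big)=\frac{\Gamma_0(\lambda t_\eta)}{\lambda}\sym(L\otimes\eta)+\frac{\Gamma_1(\lambda t_\eta)}{\lambda^2}\sym\nabla L .$$
Apply this with $L=L_0^j(H)$ and use Step 1. This yields
$$\frac{\Gamma_0}{\lambda}H=\sym\nabla\Big(\frac{\Gamma_1}{\lambda^2}L_0^j(H)\Big)-\frac{\Gamma_1}{\lambda^2}\sym\nabla L_0^j(H)+\frac{\Gamma_0}{\lambda}P_0^j(H),$$
which is precisely (\ref{ibp}) for $k=0$.

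\textbf{Step 3 (iteration).} Define recursively
$$L_i^j(H)\doteq L_0^j\big(\sym\nabla L_{i-1}^j(H)\big),\qquad P_i^j(H)\doteq P_0^j\big(\sym\nabla L_{i-1}^j(H)\big).$$
The remainder after step $i$ is $(-1)^{i+1}\frac{\Gamma_{i+1}}{\lambda^{i+2}}\sym\nabla L_i^j(H)$. This has the form $\frac{\Gamma_{i+1}}{\lambda}\tilde H$ with $\tilde H=(-1)^{i+1}\lambda^{-(i+1)}\sym\nabla L_i^j(H)$. Applying Step 2 to it, with profile $\Gamma_{i+1}$ and primitive $\Gamma_{i+2}$, produces three pieces:
\begin{itemize}
\item the exact symmetric-gradient term $(-1)^{i+1}\frac{\Gamma_{i+2}}{\lambda^{i+3}}L_{i+1}^j$,
\item the projection term $(-1)^{i+1}\frac{\Gamma_{i+1}}{\lambda^{i+2}}P_{i+1}^j$,
\item the new remainder $(-1)^{i+2}\frac{\Gamma_{i+2}}{\lambda^{i+3}}\sym\nabla L_{i+1}^j$.
\end{itemize}
Summing these contributions for $i=0,\ldots,k$ gives exactly the three groups of terms in (\ref{ibp}), with the signs and powers of $\lambda$ as stated.

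Properties (i) and (ii) then follow by induction. Since $L_0^j$ and $P_0^j$ are constant-coefficient linear maps, $L_i^j(H)$ and $P_i^j(H)$ are linear combinations of entries of $\nabla^{(i)}H$, hence of class $\mathcal{C}^{k+1-i}$ when $H\in\mathcal{C}^{k+1}$. Every $P_i^j$ takes values in the span of Step 1, which gives (ii). The only non-routine point is the direct-sum decomposition in Step 1, and the first-row computation above settles it.
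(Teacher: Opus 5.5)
Your proof is correct and follows the paper's argument. The paper also inducts on $k$ using the splitting $H=\sym(L_0^j\otimes\eta_{1j})+P_0^j$ and the product rule, and its recursion (applying the $k=0$ identity to $\sym\nabla L_{k-1}^j$) is exactly your $L_i^j=L_0^j(\sym\nabla L_{i-1}^j)$, $P_i^j=P_0^j(\sym\nabla L_{i-1}^j)$. The only difference is that the paper writes $L_0^j=2\sqrt{2}He_1-\sqrt{2}H_{11}(e_1+e_j)$ explicitly, which is correctly normalized only for $j\geq 2$; for $j=1$, where $\eta_{11}=e_1$, one needs $2He_1-H_{11}e_1$. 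Your abstract direct-sum argument handles all $j$ uniformly, but your first-row computation should also be stated as proving that $L\mapsto\sym(L\otimes\eta)$ is injective, so the dimensions add up to exactly $d_*$ rather than ``at most'' $d_*$, which is what the conclusion needs.
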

\begin{proof}
We assume that the matrix field $H$ is smooth and 
validate (\ref{ibp}) inductively for any $k\geq 0$, constructing the fields
$\{L_i^j =L_i^j(H)\}_{i\geq 0}$ and $\{P_i^j=P_i^j(H)\}_{i\geq 0}$ with the properties stated in (i), (ii). 
At $k=0$, we may check directly that these are valid in:
\begin{equation}\label{ibp0}
\begin{split}
& \frac{\Gamma_0(\lambda t_{\eta_{1j}})}{\lambda}H = \frac{\Gamma_0(\lambda t_{\eta_{1j}})}{\lambda}
\sym\big(L_0^{j}\otimes\eta_{ij}\big) + \frac{\Gamma_0(\lambda t_{\eta_{1j}})}{\lambda}P_0^j
\\ & \hspace{1.95cm} 
= -\frac{\Gamma_{1}(\lambda t_{\eta_{1j}})}{\lambda^{2}} \sym\nabla L^{j}_0  + \sym\nabla
\Big(\frac{\Gamma_{1}(\lambda t_{\eta_{1j}})}{\lambda^{2}}L_0^j\Big) 
+ \frac{\Gamma_0(\lambda t_{\eta_{1j}})}{\lambda} P_0^j, \\
& \mbox{where: } \;  L_0^j = 2\sqrt{2}He_1 - \sqrt{2}H_{11}(e_1+e_j),
\quad P_0^j = H-\sym(L_0^j\otimes \eta_{1j}).
\end{split}
\end{equation}
For the induction step, assume that $\{L_i^j, P_i^j\}_{i=0}^{k-1}$
are defined and (\ref{ibp}) holds at some $k-1\geq 0$:
\begin{equation}\label{ibpk-1}
\begin{split}
\frac{\Gamma_0(\lambda t_{\eta_{1j}})}{\lambda}H = & \; (-1)^{k}\frac{\Gamma_{k}(\lambda
  t_{\eta_{1j}})}{\lambda^{k+1}} \sym\nabla L^{j}_{k-1}  \\ & + \sym\nabla
\Big(\sum_{i=0}^{k-1}(-1)^i\frac{\Gamma_{i+1}(\lambda t_{\eta_{1j}})}{\lambda^{i+2}}L_i^j\Big) 
+ \sum_{i=0}^{k-1}(-1)^i\frac{\Gamma_i(\lambda t_{\eta_{1j}})}{\lambda^{i+1}} P_i^j.
\end{split}
\end{equation}
We now apply (\ref{ibp0}) to $H=\sym\nabla L_{k-1}^j$ and write:
\begin{equation*}
\begin{split}
& \frac{\Gamma_k(\lambda t_{\eta_{1j}})}{\lambda} \sym\nabla L_{k-1}^j
= -\frac{\Gamma_{k+1}(\lambda t_{\eta_{1j}})}{\lambda^{2}} \sym\nabla
L^{j}_k  + \sym\nabla
\Big(\frac{\Gamma_{k+1}(\lambda t_{\eta_{1j}})}{\lambda^{2}}L_k^j\Big) 
+ \frac{\Gamma_k(\lambda t_{\eta_{1j}})}{\lambda} P_k^j, \\
& \mbox{where: } \;  L_k^j = 2\sqrt{2}(\sym\nabla L_{k-1}^j)e_1 -
\sqrt{2}\langle \partial_1L_{k-1}^j, e_1\rangle (e_1+e_j), \\ & 
\hspace{1.35cm} P_k^j = \sym\nabla L_{k-1}^j - \sym(L_k^j\otimes \eta_{1j}).
\end{split}
\end{equation*}
Multiplying the above by $(-1)/\lambda^k$ and replacing with the
first term in the right hand side of (\ref{ibpk-1}), provides the
identity (\ref{ibp}). The recursive formulas for $L_k^j, P_k^j$ imply
the propagation of the properties (i), (ii). The proof is done. 
\end{proof}

\medskip

\noindent Finally, we recall a version of {\em K\"all\'en's
decomposition}, which originates from an iterative argument in 
\cite{Kallen}. In comparison with Lemma \ref{lem_dec_def}, the
decomposition below additionally absorbs the
non-oscillatory component  $\frac{1}{\lambda^2}\nabla a\otimes\nabla a$ 
of the last defect term on the right hand side of (\ref{step_err}).
The statement below is taken from \cite[Lemma 6.1]{lew_full2d2k} and
we point out that a similar result appeared in \cite[Proposition
3.1]{in_lew2} with only one extra term of the type
$\frac{1}{\lambda^2}\nabla a\otimes\nabla a$, while \cite[Lemma
2.2]{CHI2} featured more absorbed terms, as in (\ref{H_deco}).
The matrix field $H$ in (\ref{H_deco})
should be thought of as the scaled defect $\mathcal{D}$ in the proof
of Theorem \ref{thm_stage}. 

\begin{lemma}\label{lem_Kallen}
Fix two integers $M, N\geq 1$ and let $H\in \mathcal{C}^{M+N}(\R^d,\R^{d\times d}_\sym)$ satisfy:
\begin{equation}\label{ass_H0}
\|H-H_0\|_0\leq \frac{r_0}{2} \quad\mbox{ and }\quad 
\|\nabla^{(m)}H\|_0\leq \bar C\mu^m \quad \mbox{for all } \; m=1\ldots M+N,
\end{equation}
 for some given $\mu, \bar C>1$ and with $r_0$ as in Lemma \ref{lem_dec_def}.
Then, there exists $\sigma_0 \geq 1$ depending only on $d, M, N$,
such that the following holds. Let $\sigma\geq \sigma_0$ and define:
$$\lambda_{1j} = \sigma^j\mu \quad\mbox{ for } j=1\ldots d.$$
There exist $\{a_{ij}\in \mathcal{C}^{M+1}(\R^d,\R)\}_{i,j:1\ldots d, \, i\leq j}$
such that, writing:
\begin{equation}\label{H_deco}
H= \sum_{i\leq j} a_{ij}^2\eta_{ij}\otimes\eta_{ij} + \sum_{j=1}^d\frac{1}{\lambda_{1j}^2}\nabla
a_{1j}\otimes\nabla a_{1j} + \mathcal{F}_0,
\end{equation}
with $\{\eta_{ij}\in\R^d\}_{i,j:1\ldots d, \;i\leq j}$ as in Lemma \ref{lem_dec_def}, there hold the estimates:
\begin{equation}\label{Ebounds}
\begin{split}
&  \frac{1}{2}\leq a_{ij}^2\leq \frac{3}{2} \hspace{1.4cm} \mbox{ and }\quad 
\frac{1}{2}\leq a_{ij}\leq \frac{3}{2} \hspace{1.6cm} \mbox{ for all } \;  i\leq j,\\
& \|\nabla^{(m)} a_{ij}^2\|_0\leq C\mu^m\quad 
\mbox{and } \quad \|\nabla^{(m)} a_{ij}\|_0\leq C\mu^m \;\quad\mbox{ for }\;
 m=1\ldots M+1,\;  i\leq j,\\
& \|\nabla^{(m)}\mathcal{F}_0\|_0\leq C\frac{\mu^m}{\sigma^{2N}}\quad\mbox{ for } \; m=0\ldots M.  
\end{split}
\end{equation}
The constant $C$ above depends only on $d, M, N, \bar C$.
\end{lemma}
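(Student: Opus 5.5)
The plan is to build the coefficients $a_{ij}$ by a finite fixed-point (Picard-type) iteration. At each round, the correction term $\sum_j\lambda_{1j}^{-2}\nabla a_{1j}\otimes\nabla a_{1j}$ computed from the previous round is subtracted from $H$, and Lemma \ref{lem_dec_def} is applied to what remains. Concretely, I would set $a^{(0)}_{ij}\doteq\sqrt{\bar a_{ij}(H)}$, and for $k=1\ldots N$:
\begin{equation*}
a^{(k)}_{ij}\doteq \Big(\bar a_{ij}\Big(H-\sum_{s=1}^d\frac{1}{\lambda_{1s}^2}\nabla a^{(k-1)}_{1s}\otimes\nabla a^{(k-1)}_{1s}\Big)\Big)^{1/2}.
\end{equation*}
I then take $a_{ij}\doteq a^{(N)}_{ij}$. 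By linearity of $\bar a_{ij}$ and Lemma \ref{lem_dec_def}, we have exactly $H=\sum_{i\leq j}(a^{(N)}_{ij})^2\eta_{ij}\otimes\eta_{ij}+\sum_s\lambda_{1s}^{-2}\nabla a^{(N-1)}_{1s}\otimes\nabla a^{(N-1)}_{1s}$. Hence (\ref{H_deco}) holds with
\begin{equation*}
\mathcal{F}_0\doteq\sum_{s=1}^d\frac{1}{\lambda_{1s}^2}\Big(\nabla a^{(N-1)}_{1s}\otimes\nabla a^{(N-1)}_{1s}-\nabla a^{(N)}_{1s}\otimes\nabla a^{(N)}_{1s}\Big).
\end{equation*}

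The first step is well-definedness and the uniform bounds. By induction on $k$, I would show that $a^{(k)}_{ij}\in\mathcal{C}^{M+N+1-k}$ satisfies $\|\nabla^{(m)}a^{(k)}_{ij}\|_0\leq C\mu^m$ for $1\leq m\leq M+N+1-k$. For this I use $\lambda_{1s}\geq\sigma\mu$, so the correction term has $\mathcal{C}^0$ norm at most $C\sigma^{-2}$. Its $m$-th derivatives are at most $C\mu^m\sigma^{-2}$, with each round losing one derivative. Choosing $\sigma_0$ large, the argument of $\bar a_{ij}$ stays in $B(H_0,r_0)$, since $\|H-H_0\|_0\leq r_0/2$. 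Thus $\bar a_{ij}(\cdot)\in[1/2,3/2]$, the square root is smooth there, and Fa\`a di Bruno's formula for $\sqrt{\cdot}$ composed with a function whose derivatives grow like $\mu^m$ gives the derivative bounds. After $N$ rounds, $M+1$ derivatives remain, which matches (\ref{Ebounds}).

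The second step is the contraction estimate, which yields the $\sigma^{-2N}$ gain. I would prove by induction that
\begin{equation*}
\|\nabla^{(m)}(a^{(k)}_{ij}-a^{(k-1)}_{ij})\|_0\leq C\mu^m\sigma^{-2k}\qquad\mbox{for } m\leq M+N+1-k.
\end{equation*}
The difference of the arguments of $\bar a_{ij}$ at rounds $k$ and $k-1$ is $\sum_s\lambda_{1s}^{-2}$ times a difference of tensor products. Writing that difference bilinearly and using $\lambda_{1s}^{-2}\leq\mu^{-2}\sigma^{-2}$, the bound $\|\nabla^{(m+1)}a^{(\cdot)}\|_0\leq C\mu^{m+1}$, and the inductive bound on $\nabla^{(m+1)}(a^{(k-1)}-a^{(k-2)})$, this difference is bounded by $C\mu^m\sigma^{-2}\cdot\sigma^{-2(k-1)}$. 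The square root is Lipschitz with controlled higher derivatives on $[1/2,3/2]$, so the same bound transfers to $a^{(k)}-a^{(k-1)}$ via a Leibniz/Fa\`a di Bruno estimate for $\sqrt{x}-\sqrt{y}=(x-y)/(\sqrt{x}+\sqrt{y})$. Applying this at $k=N$ in the bilinear expansion of $\mathcal{F}_0$ gives $\|\nabla^{(m)}\mathcal{F}_0\|_0\leq C\mu^m\sigma^{-2N}$ for $m\leq M$.

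The main obstacle is the derivative bookkeeping. Each iteration consumes one derivative, because $\nabla a^{(k-1)}$ enters the next round. One has to check that the hypothesis of $M+N$ derivatives on $H$ leaves exactly $M+1$ derivatives on $a_{ij}$ and $M$ on $\mathcal{F}_0$, with constants depending only on $d,M,N,\bar C$. The other point to check is the choice of $\sigma_0$ (depending on $d,M,N$, and through the constants also on $\bar C$) making all arguments of $\bar a_{ij}$ lie in $B(H_0,r_0)$ at every round. Only the crude bound $\lambda_{1j}\geq\sigma\mu$ is needed; the distinct frequencies $\sigma^j\mu$ play no role here.
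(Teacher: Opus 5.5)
Your iteration is the standard K\"all\'en fixed-point scheme. The paper gives no proof of its own and imports the lemma from earlier work, where it is proved this way. Your contraction mechanism is right: each round gains a factor $\lambda_{1s}^{-2}\mu^2\le\sigma^{-2}$. However, the derivative bookkeeping, which you yourself single out as the main obstacle, is off by one. As written, the proposal does not deliver the regularity in (\ref{Ebounds}).

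Since $H$ is only $\mathcal{C}^{M+N}$, the field $a^{(0)}_{ij}=\sqrt{\bar a_{ij}(H)}$ is only $\mathcal{C}^{M+N}$, not $\mathcal{C}^{M+N+1}$. Each round loses one derivative, so $a^{(k)}_{ij}\in\mathcal{C}^{M+N-k}$. After your $N$ rounds you therefore get only $a_{ij}=a^{(N)}_{ij}\in\mathcal{C}^{M}$. Moreover $\mathcal{F}_0$ contains $\nabla a^{(N)}_{1s}$, so it is controlled only up to order $M-1$. The lemma asks for $a_{ij}\in\mathcal{C}^{M+1}$ and for bounds on $\nabla^{(m)}\mathcal{F}_0$ for all $m\le M$.

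The fix comes from noticing that you also under-count the decay. In
$\mathcal{F}_0=\sum_s\lambda_{1s}^{-2}\big(\nabla a^{(N-1)}_{1s}\otimes\nabla a^{(N-1)}_{1s}-\nabla a^{(N)}_{1s}\otimes\nabla a^{(N)}_{1s}\big)$,
the prefactor $\lambda_{1s}^{-2}\le\sigma^{-2}\mu^{-2}$, combined with $\|\nabla^{(p+1)}a\|_0\le C\mu^{p+1}$, contributes an extra $\sigma^{-2}$ on top of the increment bound. So $N$ rounds actually give $\sigma^{-2(N+1)}$, and one round fewer suffices.

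Take $a_{ij}\doteq a^{(N-1)}_{ij}\in\mathcal{C}^{M+1}$. For $N=1$ this is just $a_{ij}=\sqrt{\bar a_{ij}(H)}$ with $\mathcal{F}_0=-\sum_s\lambda_{1s}^{-2}\nabla a_{1s}\otimes\nabla a_{1s}$. Use the increment bounds $\|\nabla^{(m)}(a^{(k)}_{ij}-a^{(k-1)}_{ij})\|_0\le C\mu^m\sigma^{-2k}$ for $m\le M+N-k$. Then $\mathcal{F}_0$ involves at most $M+1$ derivatives of $a^{(N-1)}$ and $a^{(N-2)}$, so it is $\mathcal{C}^{M}$. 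Its bilinear expansion gives
$\|\nabla^{(m)}\mathcal{F}_0\|_0\le C\sigma^{-2}\mu^{-2}\sum_{p+q=m}\mu^{p+1}\mu^{q+1}\sigma^{-2(N-1)}=C\mu^m\sigma^{-2N}$ for all $m\le M$, exactly as required.

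Your remark that $\sigma_0$ must depend on $\bar C$ is correct, contrary to the literal statement. The first correction can be as large as $C\bar C^2\sigma^{-2}$ and must stay below $r_0/2$. This is harmless in the application in Step 3, where $\|\nabla^{(m)}H\|_0\le\frac{r_0}{2}\mu_k^m$, so $\bar C$ can be taken to be a fixed constant.
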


\smallskip

\noindent We note that if $H\in \mathcal{C}^\infty(\R^d,\R^{d\times d}_\sym)$, then
$\{a_{ij}\}_{i\leq j}$ and $\mathcal{F}$ are smooth as well, and their $(M+1)$-th and $M$-th
order derivatives, respectively, depend on the at most $M+N$ derivatives of $H$. 

\medskip

\section{The stage construction and a proof of Theorem \ref{thm_stage}} \label{sec_khamsa} 

We will now complete the {\em stage} in the present formulation of the 
convex integration algorithm. Recall that, given any
$v\in\mathcal{C}^1(\bar\omega,\R^{\bar d})$, 
$w\in\mathcal{C}^1(\bar\omega,\R^{d})$, 
$A\in\mathcal{C}^{r,\beta}(\bar\omega,\R^{d\times d}_\sym)$, we denote:
$$\mathcal{D}(A, v, w) \doteq A - \big(\frac{1}{2}(\nabla v)^T\nabla v +
\sym\nabla w\big).$$
As explained in the introduction section \ref{sec_intro}, the proof below inductively defines $K$
intermediate subsolutions $\{(v_k, w_k)\}_{k=1}^K$, over 13
steps. The initial pair $(v_0, w_0)$ is constructed in step 1, where we
also $A$ to $A_0$. The final pair, obtained in step 13, satisfies $(\tilde v, \tilde w) = (v_K, w_K)$. 

\medskip

\noindent The construction of $(v_{k+1}, w_{k+1})$ from $(v_k, w_k)$ is carried
out in steps 3-13. We successively add corrugations according to Lemma
\ref{lem_step2}, with the aim of canceling the
$d_*$ rank-one primitive defects $a^2 \eta \otimes
\eta$, see (\ref{step_err}). These primitive defects arise from the
decomposition of $\mathcal{D}(A_0, v_k, w_k)$ in Lemma \ref{lem_dec_def},
performed in step 3. In steps 4-8, we first use codimension $n=1$ to
cancel the primitive defects corresponding to all $\eta_{1j}$,
through a higher tier of intermediate subsolution pairs
${(V_{1j}, W_{1j})}_{j=1}^d$. Here, Lemma \ref{lem_IBP} is
applied $N$ times to reduce the order of the resulting error terms. A
prior application of Lemma \ref{lem_Kallen} allows one to additionally
absorb into $\mathcal{D}$ those portions of the error whose
oscillation profiles $\Gamma$ have nonzero mean and would otherwise be
of too large an order under the integration (\ref{recu_ibp}). 

\medskip

\noindent In steps 9-12, the codimensions $n=2 \ldots\bar d$ are
used to cancel the remaining primitive defects, namely those corresponding to $\eta_{ij}$ with $2 \leq i \leq j \leq
d$ (see the relation of $n$ and $(i,j)$ in (\ref{nij})), thereby
constructing the next intermediate subsolution pairs ${(V_n, W_n)}_{n=2}^{\bar d}$.   
The main observation is that the first error term on the right-hand
side of (\ref{step_err}), which involves $\nabla^{(2)} v_{k}^n$, can be
written as a sum of terms of the same structure, now involving only
$\nabla^{(2)} v_0$, together with an expansion of oscillatory
contributions remaining from previous induction steps. All these terms oscillate
in a fixed direction and can therefore be further decomposed and
reduced by an argument similar to (\ref{ibp}), but without the $P_i^j$
components. This observation enables the bounds established in step 12,
which constitute the main new ingredient of our analysis. 
We then conclude the induction step by setting $(v_{k+1}, w_{k+1}) =(V_{\bar d}, W_{\bar d})$. 


\begin{figure}[htbp]
\centering
\includegraphics[scale=0.53]{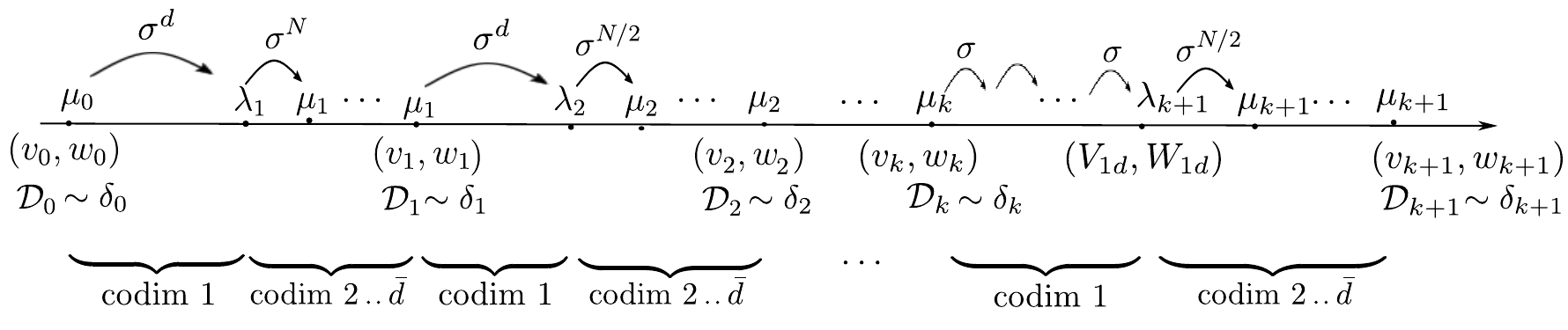}
\caption{{Progression of frequencies, intermediate fields, defects
 and codimensions used in the proof of Theorem \ref{thm_stage}.}} 
\label{fig_freq}
\end{figure}

\medskip

\noindent {\bf Proof of Theorem \ref{thm_stage}}. 
 
{\bf 1. (Mollification and the initial quantities)} 
Let $N$, $K$, $v$, $w$, $A$ be as
in the statement of the theorem, together with $\mathcal{M}$,
$\sigma$ satisfying (\ref{Assu}), where the relative frequency threshold $\sigma_0$ will
be determined in the course of the proof. Recall the definition of
$\mathcal{D}$ in (\ref{defect0}) and denote:
$$\delta_0 \doteq \|\mathcal{D}\|_0, \qquad \mu_0 \doteq \frac{\mathcal{M}}{\delta_0^{1/2}}.$$ 
We now define the mollified fields $v_0\in\mathcal{C}^\infty(\bar\omega, \R^{\bar d})$,
$w_0\in \mathcal{C}^\infty(\bar\omega, \R^{d})$,
$A_0\in\mathcal{C}^\infty(\bar\omega, \R^{d\times d}_\sym)$, relative
to the kernel $\phi_l$ as in Lemma \ref{lem_stima}, with $l=1/\mu_0$, namely:
$$v_0 \doteq v\ast \phi_{1/\mu_0},\quad w_0 \doteq w\ast
\phi_{1/\mu_0}, \quad A_0 \doteq A\ast \phi_{1/\mu_0}.$$
Denote the initial defect in the stage construction by:
$${\mathcal{D}}_0 \doteq \mathcal{D}(A_0, v_0, w_0) = A_0 -
\big(\frac{1}{2}(\nabla v_0)^T\nabla v_0 + \sym\nabla w_0\big).$$ 
We collect the following initial bounds:
\begin{align*}
& \|v_0-v\|_1 + \|w_0-w\|_1 \leq C \frac{\mathcal{M}}{\mu_0} = C\delta_0^{1/2},
\tag*{(\theequation)$_1$}\refstepcounter{equation} \label{pr_stima1}\\
& \|A_0-A\|_0 \leq C_A \frac{\|A\|_{r,\beta}}{\mu_0^{r+\beta}} = C_A
\frac{\|A\|_{r,\beta}}{\mathcal{M}^{r+\beta}} \delta_0^{(r+\beta)/2},
\tag*{(\theequation)$_2$} \label{pr_stima2}\\ 
& \|\nabla^{(m+2)}v_0\|_0 + \|\nabla^{(m+2)}w_0\|_0 \notag\\
&\qquad\quad \leq
C\mathcal{M}\mu_0^m = C\delta_0^{1/2} \mu_0^{m+1}
\quad \mbox{ for } m=0\ldots K(2N+4)+2,
\tag*{(\theequation)$_3$} \label{pr_stima3}\\ 
& \|\nabla^{(m)} \mathcal{D}_0\|_0\leq
C_0 \delta_0\mu_0^m \qquad \quad \mbox{ for }
m=0\ldots K(2N+4), \tag*{(\theequation)$_4$}\label{pr_stima4} 
\end{align*}
where the constants $C, C_0\geq 1$ depend only on $\omega, N, K$ and $C_A$
on $\omega, r,\beta$, as above. Indeed,  \ref{pr_stima1},
\ref{pr_stima2} result from \ref{stima2} in Lemma \ref{lem_stima}, 
while \ref{pr_stima3} follows by applying \ref{stima1} to 
$\nabla^{(2)}v$ and $\nabla^{(2)}w$ with the differentiability exponent $m-1$.
Since:
$$\mathcal{D}_0 = \mathcal{D}\ast \phi_{1/\mu_0} - \frac{1}{2}\big((\nabla
v_0)^T\nabla v_0 - ((\nabla v)^T\nabla v)\ast\phi_{1/\mu_0}\big), $$ 
we get \ref{pr_stima4} by applying \ref{stima1} to $\mathcal{D}$, and
\ref{stima4} to $\nabla v$.

\medskip

{\bf 2. (Setting up the induction)}
In steps 3-12, we will construct the intermediate fields 
$\{v_k\in\mathcal{C}^{(K-k)(2N+4)+2}(\bar\omega, \R^{\bar d})\}_{k=1}^K$ and
$\{w_k\in \mathcal{C}^{(K-k)(2N+4)+2}(\bar\omega, \R^{d})\}_{k=1}^K$, with their defects:
$$\mathcal{D}_k \doteq \mathcal{D}(A_0, v_k, w_k)   
= A_0 - \big(\frac{1}{2}(\nabla v_k)^T\nabla v_k + \sym\nabla w_k\big),$$
with respect to the progression of frequencies and the
defect magnitudes (see Figure \ref{fig_freq}) in:
\begin{equation}\label{DEF}
\begin{split}
& \lambda_k \doteq \mu_{k-1}\sigma^d, \qquad 
\mu_k \doteq \mu_0\sigma^{d+N+(d+\frac{N}{2})(k-1)},
\\ & \delta_k \doteq \frac{\delta_0}{\sigma^{Nk}}
\quad \mbox{ for all }\; k=1\ldots K.
\end{split}
\end{equation}
We will prove the following estimates, valid for all $k=1\ldots K$:
\begin{align*}
& \|v_k - v\|_1\leq C\delta_0^{1/2}, \qquad\qquad
\|w_k -w\|_1\leq C\delta_0^{1/2} \big(1+ \|\nabla v\|_0\big), 
\vspace{3mm} \tag*{(\theequation)$_1$}\refstepcounter{equation} \label{Bbound12}\\
& \|\nabla^{(m+2)} v_k^1\|_0\leq C\delta_{k-1}^{1/2}\lambda_k^{m+1} 
\qquad \mbox{for all } m=0\ldots (K-k)(2N+4),
\vspace{3mm} \tag*{(\theequation)$_2$}\label{Bbound22} \\ 
& \|\nabla^{(m+2)} v_k^n\|_0\leq C\delta_{k-1}^{1/2}\mu_k^{m+1}
\hspace{0.35cm} \quad \mbox{for all } n=2\ldots \bar d, \; m=0\ldots (K-k)(2N+4),
\vspace{3mm} \tag*{(\theequation)$_3$}\label{Bbound32} \\ 
& \|\nabla^{(m+2)} w_k\|_0\leq C\delta_{k-1}^{1/2}\mu_k^{m+1}\big(1+\|\nabla v\|_0\big)
\qquad \mbox{for all } m=0\ldots (K-k)(2N+4),
\vspace{3mm} \tag*{(\theequation)$_4$}\label{Bbound42} \\ 
& \|\nabla^{(m)}\mathcal{D}_k\|_0\leq C_k\delta_k\mu_k^m 
\hspace{3.45cm} \quad \mbox{for all } m=0\ldots (K-k)(2N+4).
\tag*{(\theequation)$_5$} \label{Bbound52}
\end{align*}
The fields $\{(v_k, w_k)\}_{k=1}^K$ will be defined by applying Lemma \ref{lem_step2}
with frequencies as in (\ref{DEF}), and with the corresponding amplitudes $\{a_{ij}^k\in
\mathcal{C}^{(K-k)(2N+4)+N+5}(\bar\omega, \R)\}_{i,j:1\ldots 
  d,\;i\leq j}$, $\{b_{ij}^k\in \mathcal{C}^{(K-k)(2N+4)+3}(\bar\omega, \R)\}_{i,j:2\ldots
  d,\;i\leq j}$ that obey the bound belows, valid for all $k=1\ldots K$:
\begin{align*}
& \|\nabla^{(m)}a_{ij}^k\|_0\leq C\delta_{k-1}^{1/2}\mu_{k-1}^m
\qquad \mbox{for } m=0\ldots (K-k)(2N+4) +N+5 \;\mbox{ and } i\leq j,
\tag*{(\theequation)$_1$} \refstepcounter{equation} \label{Cbound12} \\ 
& \|\nabla^{(m)}b_{ij}^k\|_0\leq C\delta_{k-1}^{1/2}\lambda_k^m
\hspace{1.05cm} \mbox{for } m=0\ldots  (K-k)(2N+4)+3 \;\mbox{ and } 2\leq i\leq j.
\tag*{(\theequation)$_2$}\label{Cbound22} 
\end{align*}
The constants $C, C_k\geq 1$ in \ref{Bbound12}-\ref{Bbound52} and
\ref{Cbound12}-\ref{Cbound22} will depend only on $\omega, N, K$.

\medskip

\noindent From now on, we fix $k=0\ldots K-1$. If $k\geq 1$,
we assume that \ref{Bbound12}-\ref{Bbound52} and
\ref{Cbound12}, \ref{Cbound22} hold for the counter values up to and
including $k$, in addition to the bounds
\ref{pr_stima1}-\ref{pr_stima4} at $k=0$. In the steps
3-12 below, we will validate \ref{Bbound12}-\ref{Bbound52} and
\ref{Cbound12}, \ref{Cbound22} at $k+1$. In the final step 13, we will 
deduce the statement of the theorem.

\medskip

{\bf 3. (The defect decomposition)} Define the intermediate
frequencies at the present $k$-counter value of the induction, noting that
$\lambda_{1d}$ equals $\lambda_{k+1}$ in  (\ref{DEF}):
$$\lambda_{1j} \doteq \mu_k\sigma^j \quad\mbox{ for all } j=1\ldots d.$$
We apply Lemma \ref{lem_Kallen} to the relative
frequency $\sigma$ and the following quantities: 
$$H= H_0+\frac{r_0}{2C_k\delta_k}\mathcal{D}_k, \quad \mu=\mu_k, \quad \sigma,$$
involving $H_0$ given in (\ref{H_0}). Consequently, 
we obtain the positive coefficients $a_{ij} = {\bar a_{ij}}^{1/2}$ in the
decomposition of $H$ above, together with the 
error field $\mathcal{F}_0$. Multiplying these by
$(2C_k\delta_k/r_0)^{1/2}$ and $2C_k\delta_k/r_0$, respectively, we obtain the scaled fields:
$$\{a_{ij}^{k+1}\in\mathcal{C}^{(K-k)(2N+4)-N+1}(\bar\omega, \R)\}_{i,j=1\ldots
  d,\; i\leq j}, \quad \mathcal{F}
\in \mathcal{C}^{(K-k)(2N+4)-N}(\bar\omega, \R^{d\times d}_\sym),$$ 
in the following decomposition of $\mathcal{D}_k$:
\begin{equation}\label{def1}
\mathcal{D}_k = \sum_{i\leq j} (a_{ij}^{k+1})^2\eta_{ij}\otimes
\eta_{ij}+\sum_{j=1}^d\frac{1}{\lambda_{1j}^2} \nabla a_{1j}^{k+1}\otimes \nabla a_{1j}^{k+1}
+\mathcal{F} - \frac{2C_k\delta_k}{r_0}H_0,
\end{equation}
that satisfy the scaled version of (\ref{Ebounds}):
\begin{equation}\label{Fbounds}
\begin{split}
&  \frac{C_k\delta_k}{r_0}\leq (a_{ij}^{k+1})^2\leq
\frac{3C_k\delta_k}{r_0} \quad\mbox{ and }\quad  
\frac{1}{2}\big(\frac{2C_k\delta_k}{r_0}\big)^{1/2}
\leq a_{ij}^{k+1}\leq \frac{3}{2} \big(\frac{2C_k\delta_k}{r_0}\big)^{1/2}
\quad  \mbox{for all } \;  i\leq j,\\
& \|\nabla^{(m)} (a_{ij}^{k+1})^2\|_0\leq C\delta_k\mu_k^m\quad 
\mbox{and } \quad \|\nabla^{(m)} a_{ij}^{k+1}\|_0\leq
C\delta_k^{1/2}\mu_k^m \\
& \hspace{4.55cm}\mbox{ for all }  m=1\ldots (K-k)(2N+4)-N+1,\;  i\leq j,\\
& \|\nabla^{(m)}\mathcal{F}\|_0\leq
C\delta_k\frac{\mu_k^m}{\sigma^{2N}}\hspace{9mm}\mbox{ for all } m=0\ldots (K-k)(2N+4)-N.  
\end{split}
\end{equation}
The constants $C$, $C_k$ depend only on $\omega, K, N$.
In particular, there follows \ref{Cbound12} at $k+1$.

\medskip

{\bf 4. (Adding corrugations in the first codimension)} 
We recursively define the fields: 
$\{V_{1j}\in\mathcal{C}^{(K-k)(2N+4)-N+1}(\bar\omega,\R^{\bar d})\}_{j=1}^d$,
$\{W_{1j}\in\mathcal{C}^{(K-k)(2N+4)-2N-1}(\bar\omega,\R^{d})\}_{j=1}^d$,  by setting:
$$(V_{1,0}, W_{1,0}) = (v_k, w_k)$$
and further, utilizing construction (\ref{defi_per}) in Lemma \ref{lem_step2} for each $j=1\ldots d$,
with $a=a_{ij}$ (we suppress the superscript $k+1$ to ease the
notation), $\lambda = \lambda_{1j}$, $\eta=\eta_{1j}$ and $n=1$:
\begin{equation}\label{VW1}
\begin{split}
& V_{1j}\doteq V_{1,j-1}+\frac{a_{1j}}{\lambda_{ij}}\Gamma(\lambda_{1j}t_{\eta_{1j}})e_1,
\\ & W_{1j} \doteq W_{1,j-1} - \frac{a_{1j}}{\lambda_{1j}} \Gamma(\lambda_{1j} t_{\eta_{1j}})\nabla V^1_{1,j-1}
+ \frac{(a_{1j})^2}{\lambda_{1j}} \bar\Gamma(\lambda_{1j}
t_{\eta_{1j}})\eta_{1j} \\ &\qquad \quad+ 
\frac{a_{1j}}{(\lambda_{1j})^2} \dbar \Gamma(\lambda_{1j}
t_{\eta_{1j}})\nabla a_{1j} - \bar\Theta_{1j}.
\end{split}
\end{equation}
Note the additional perturbation term in $W_{1j}$, that is given by:
\begin{equation}\label{omega1}
\begin{split}
& \bar\Theta_{11} \doteq \Theta_{11} + \frac{2C_k\delta_k}{r_0}H_0x, \qquad 
\bar\Theta_{1j} \doteq \Theta_{1j} \quad\mbox{ for } j=2\ldots d,
\\ & \Theta_{1j} = -\sum_{i=0}^N (-1)^i \frac{\Gamma_{i+1}(\lambda_{1j}t_{\eta_{1j}})}{(\lambda_{1j})^{i+2}} 
L_i^j(a_{1j}\nabla^{(2)}V_{1,{j-1}}^1) \\ & \qquad\quad +  \sum_{i=0}^N (-1)^i
\frac{\dbar\Gamma_{i+1}(\lambda_{1j}t_{\eta_{1j}})}{(\lambda_{1j})^{i+3}}  
L_i^j(a_{1j}\nabla^{(2)} a_{1j}) \\ & \qquad\quad + \sum_{i=0}^N (-1)^i
\frac{(\tbar\Gamma-1)_{i+1}(\lambda_{1j}t_{\eta_{1j}})}{(\lambda_{1j})^{i+3}}  
L_i^j(\nabla a_{1j}\otimes \nabla a_{1j}),
\end{split}
\end{equation}
where $\{L_i^j\}_{i=0}^N$ are the operators defined in Lemma \ref{lem_IBP}.
By (\ref{ibp}) and (\ref{step_err}), we get:
\begin{equation*}
\begin{split}
& \mathcal{D}(A_0, V_{1j}, W_{1j}) - \mathcal{D}(A_0, V_{1,j-1},
W_{1,j-1}) \\ & = -\Big(\big(\frac{1}{2}(\nabla V_{1j})^T \nabla V_{1j} +
\sym\nabla W_{1j}\big) - \big(\frac{1}{2}(\nabla V_{1,j-1})^T \nabla V_{1,j-1} +
\sym\nabla W_{1,j-1}\big)\Big)\\ & = - (a_{1j})^2\eta_{1j}\otimes\eta_{1j} + 
\frac{\Gamma(\lambda_{1j}t_{\eta_{1j}})}{\lambda_{1j}} a_{1j}\nabla^{(2)}V_{1,{j-1}}^1 
- \frac{\dbar\Gamma(\lambda_{1j}t_{\eta_{1j}})}{(\lambda_{1j})^{2}}  
a_{1j}\nabla^{(2)} a_{1j} \\ & \qquad - \frac{(\tbar\Gamma-1)(\lambda_{1j}t_{\eta_{1j}})}{(\lambda_{1j})^{2}}  
\nabla a_{1j}\otimes \nabla a_{1j} - \frac{1}{(\lambda_{1j})^{2}}  
\nabla a_{1j}\otimes \nabla a_{1j} +\sym\nabla\bar\Theta_{1j}.
\end{split}
\end{equation*}
We above formula can be rewritten as:
\begin{equation}\label{def2}
\begin{split}
& \mathcal{D}(A_0, V_{1j}, W_{1j}) - \mathcal{D}(A_0, V_{1,j-1},
W_{1,j-1}) \\ & = \Big( - (a_{1j})^2\eta_{1j}\otimes\eta_{1j} -
\frac{1}{(\lambda_{1j})^{2}}  
\nabla a_{1j}\otimes \nabla a_{1j} \Big) - \mathcal{F}_{1j} - G_{1j} +
\left\{\begin{array}{ll} \displaystyle{\frac{2C_k\delta_k}{r_0}H_0} &\mbox{if }
    j=1\\ 0& \mbox{if } j\geq 2,\end{array}\right.
\end{split}
\end{equation}
where the error fields $\{\mathcal{F}_{1j}\}_{j=1}^d$ collect the following terms:
\begin{equation}\label{F1j}
\begin{split}
& \mathcal{F}_{1j}\doteq  (-1)^{N+1} \frac{\Gamma_{N+1}(\lambda_{1j}t_{\eta_{1j}})}{(\lambda_{1j})^{N+2}} 
\sym\nabla L_N^j(-a_{1j}\nabla^{(2)}V_{1,{j-1}}^1) \\ & \qquad\quad + (-1)^{N+1}
\frac{\dbar\Gamma_{N+1}(\lambda_{1j}t_{\eta_{1j}})}{(\lambda_{1j})^{N+3}}  
\sym\nabla L_N^j(a_{1j}\nabla^{(2)} a_{1j}) \\ & \qquad\quad + (-1)^{N+1}
\frac{(\tbar\Gamma-1)_{N+1}(\lambda_{1j}t_{\eta_{1j}})}{(\lambda_{1j})^{N+3}}  
\sym\nabla L_N^j(\nabla a_{1j}\otimes \nabla a_{1j}),
\end{split}
\end{equation}
while $\{G_{1j}\}_{j=1}^d$ are given by means of the operators
$\{P_i^j\}_{i=0}^N$ in Lemma \ref{lem_IBP}:
\begin{equation}\label{G1j}
\begin{split}
& G_{1j}= \sum_{i=0}^N (-1)^i \frac{\Gamma_{i}(\lambda_{1j}t_{\eta_{1j}})}{(\lambda_{1j})^{i+1}} 
P_i^j(-a_{1j}\nabla^{(2)}V_{1,{j-1}}^1) \\ & \qquad\quad +  \sum_{i=0}^N (-1)^i
\frac{\dbar\Gamma_{i}(\lambda_{1j}t_{\eta_{1j}})}{(\lambda_{1j})^{i+2}}  
P_i^j(a_{1j}\nabla^{(2)} a_{1j}) \\ & \qquad\quad + \sum_{i=0}^N (-1)^i
\frac{(\tbar\Gamma-1)_{i}(\lambda_{1j}t_{\eta_{1j}})}{(\lambda_{1j})^{i+2}}  
P_i^j(\nabla a_{1j}\otimes \nabla a_{1j}).
\end{split}
\end{equation}
Recall that there holds:
\begin{equation}\label{higher}
G_{1j}(x)\in\mbox{span}\{\eta_{st}\otimes \eta_{st};
2\leq s\leq t\} \quad \mbox{ for all } j=1\ldots d, \quad x\in\bar\omega.
\end{equation}
Inserting (\ref{def2}) into (\ref{def1}), we observe that:
\begin{equation}\label{def3}
\begin{split}
& \mathcal{D}(A_0, V_{1d}, W_{1d}) = \mathcal{D}_k + \sum_{j=1}^d
\Big(\mathcal{D}(A_0, V_{1j}, W_{1j}) - \mathcal{D}(A_0, V_{1,j-1}, W_{1,j-1})\Big)
\\ & = \sum_{2\leq i\leq j} (a_{ij}^{k+1})^2\eta_{ij}\otimes\eta_{ij}
+\Big(\mathcal{F} - \sum_{j=1}^d\mathcal{F}_{1j} \Big) - \sum_{j=1}^d G_{1j}. 
\end{split}
\end{equation}

\medskip

{\bf 5. (Bounds on $V_{1j}$)} Recalling the definition (\ref{VW1}),
the newly constructed fields $V_{1j}$ satisfy:
$$V_{1j}^1 = v_k + \sum_{i=1}^j (V_{1i}^1-V_{1,i-1}^1).$$
Further, by (\ref{Fbounds}) there holds, for all $m=0\ldots (K-k)(2N+4)-N+1$:
\begin{equation*}
\begin{split}
\|\nabla^{(m)}(V_{1j}^1-V_{1,j-1}^1)\|_0 & \leq C\sum_{p+q=m}
(\lambda_{1j})^{p-1}\|\nabla^{(q)}a_{1j} \|_0\\ &\leq C\sum_{p+q=m}
(\lambda_{1j})^{p-1}\delta_{k}^{1/2}\mu_k^q\leq
C\delta_k^{1/2}(\lambda_{1j})^{m-1}.
\end{split}
\end{equation*}
Applying \ref{pr_stima1} when $k=0$ and \ref{Bbound12} if $k\geq 1$,
it consequently transpires that:
\begin{equation}\label{V1j-1}
\|V_{1j}^1- v^1\|_1\leq \|v_k-v\|_1 + \sum_{i=1}^j \|V_{1i}^1-V_{1,i-1}^1\|_1
\leq C\delta_0^{1/2}+  C\delta_k^{1/2}\leq C\delta_0^{1/2}, 
\end{equation}
while for all $m=0\ldots (K-k)(2N+4)-N+1$ we get:
\begin{equation}\label{V1j-m}
\begin{split}
\|\nabla^{(m+2)}V_{1j}^1\|_0& \leq \|\nabla^{(m+2)}v_k^1\|_0 +
\sum_{i=1}^j \|\nabla^{(m+2)}(V_{1i}^1-V_{1,i-1}^1)\|_0
\\ & \leq  \|\nabla^{(m+2)}v_k^1\|_0 +
C \delta_k^{1/2}(\lambda_{1j})^{m+1}\leq C \delta_k^{1/2}(\lambda_{1j})^{m+1}.
\end{split}
\end{equation}
The final inequality above follows by noting that
$\{\delta_i^{1/2}\mu_i\}_{i=0}^K$ is an increasing sequence:
\begin{equation}\label{increase}
\frac{\delta_{i+1}^{1/2}\mu_{i+1}}{\delta_i^{1/2}\mu_i} =
\frac{1}{\sigma^{N/2}} \frac{\mu_{i+1}}{\mu_i} \geq
\frac{\sigma^{d+\frac{N}{2}}}{\sigma^{N/2}} \geq 1 \quad \mbox{ for } i=0\ldots K-1.
\end{equation}
In conclusion, we get (\ref{V1j-m}) at $k=0$ by \ref{pr_stima3}, because:
$$\|\nabla^{(m+2)}v_0\|_0\leq C \delta_0^{1/2} \mu_0^{m+1}\leq C
(\delta_0^{1/2} \mu_0)(\lambda_{1j})^m \leq C (\delta_k^{1/2}
\mu_k)(\lambda_{1j})^m \leq C \delta_0^{1/2} (\lambda_{1j})^{m+1},$$
while at $k\geq 1$, we use the inductive assumption \ref{Bbound22} in:
\begin{equation*}
\begin{split}
\|\nabla^{(m+2)}v_k^1\|_0& \leq C \delta_{k-1}^{1/2} (\lambda_k)^{m+1}\leq C
(\delta_{k-1}^{1/2} \lambda_k)(\lambda_{1j})^m = C (\delta_k^{1/2}\sigma^{N/2}
\lambda_k)(\lambda_{1j})^m \\ & \leq C (\delta_k^{1/2}
\mu_k)(\lambda_{1j})^m \leq C \delta_0^{1/2} (\lambda_{1j})^{m+1}.
\end{split}
\end{equation*}
In particular, since $v_{k+1}$ will be defined to differ from
$V_{1d}$ only on its $n=2\ldots \bar d$ coordinates, the bounds (\ref{V1j-1}),
(\ref{V1j-m}) readily imply a partial bound in \ref{Bbound12}
and \ref{Bbound22} at $k+1$, since:
\begin{equation}\label{fin1}
\begin{split}
& \|v_{k+1}^1-v^1\|_1\leq C\delta_0^{1/2}, \\
& \|\nabla^{(m+2)}v_{k+1}^1\|_0 \leq C \delta_k^{1/2}
\lambda_{k+1}^{m+1} \quad \mbox{ for all } m=0\ldots (K-k)(2N+4)-N+1.
\end{split}
\end{equation}

\medskip

{\bf 6. (Bounds on $W_{1j}$)} Similarly as for $V_{1j}$, we write:
$$W_{1j} = w_k + \sum_{i=1}^j (W_{1i}-W_{1,i-1}),$$
and, recalling the definition (\ref{VW1}), prepare the estimate:
\begin{equation}\label{W1j-diff}
\begin{split}
\|\nabla^{(m)}(W_{1j}-W_{1,j-1})\|_0  \leq & \; C\sum_{p+q+r=m}
(\lambda_{1j})^{p-1}\|\nabla^{(q)}a_{1j} \|_0\|\nabla^{(r+1)}V_{1, j-1}^1\|_0
\\ & + C\sum_{p+q=m} (\lambda_{1j})^{p-1}\|\nabla^{(q)}(a_{1j})^2 \|_0
\\ & + C\sum_{p+q+r=m}
(\lambda_{1j})^{p-2}\|\nabla^{(q)}a_{1j} \|_0 \|\nabla^{(r+1)}a_{1j}
\|_0 + \|\nabla^{(m)}\bar\Theta_{1j}\|_0
\end{split}
\end{equation}
To carry out estimating the four terms in the right hand side above, we first use
(\ref{V1j-m}), \ref{Bbound22}, \ref{pr_stima3} to check that for
$m=0\ldots (K-k)(2N+4)-N-1$:
\begin{equation}\label{Vj-1}
\|\nabla^{(m+2)}V_{1, j-1}^1\|_0\leq C \left\{\begin{array}{ll}
    \delta_k^{1/2}(\lambda_{1,j-1})^{m+1} & \mbox{if } j\geq 2, \; k\geq 0\\
\delta_{k-1}^{1/2} (\lambda_k)^{m+1} & \mbox{if } j=1, \; k\geq 1\\
\delta_0^{1/2}\mu_0^{m+1} & \mbox{if } j\geq 1, \; k=0\\
\end{array}\right\} \leq C\delta_k^{1/2}(\lambda_{1,j-1})^{m+1},
\end{equation}
where the final inequality above is due, as before, to
(\ref{increase}), and with the convention that $\lambda_{1,0}\doteq\mu_k$. 
Consequently, in virtue of (\ref{Fbounds}) and  (\ref{V1j-1}), we get
that for all $m=0\ldots (K-k)(2N+4)-N$:
\begin{equation*}
\begin{split}
& \sum_{p+q+r=m} (\lambda_{1j})^{p-1}\|\nabla^{(q)}a_{1j} \|_0\|\nabla^{(r+1)}V_{1, j-1}^1\|_0
\\ & \qquad \leq C \sum_{p+q=m}
(\lambda_{1j})^{p-1}\delta_k^{1/2}\mu_k^q\big(\delta_0^{1/2}+\|\nabla v\|_0\big)
+ C\hspace{-3mm} 
\sum_{p+q+r=m,\; r\geq 1}  (\lambda_{1j})^{p-1}\delta_k^{1/2}\mu_k^q\delta_k(\lambda_{1,j-1})^{r}
\\ & \qquad \leq C\delta_k^{1/2}(\lambda_{1j})^{m-1}\big(1+ \|\nabla v\|_0\big),\vspace{2mm}\\
& \sum_{p+q=m} (\lambda_{1j})^{p-1}\|\nabla^{(q)}(a_{1j})^2 \|_0
+ \hspace{-2mm} \sum_{p+q+r=m}
(\lambda_{1j})^{p-2}\|\nabla^{(q)}a_{1j} \|_0 \|\nabla^{(r+1)}a_{1j} \|_0 \\ & \qquad \leq 
C\sum_{p+q=m} (\lambda_{1j})^{p-1}\delta_k\mu_k^q 
+ C\sum_{p+q+r=m} (\lambda_{1j})^{p-2} \delta_k\mu_k^{q+r+1}\leq C
\delta_k(\lambda_{1j})^{m-1}.
\end{split}
\end{equation*}
Finally, recalling the definition (\ref{omega1}) and Lemma
\ref{lem_IBP}, we note that:
\begin{equation*}
\begin{split}
\|\nabla^{(m)}\Theta_{1j}\|_0\leq C\sum_{i=0}^N
\Big(& \sum_{p+q=m}(\lambda_{1j})^{p-(i+2)}\|\nabla^{(q+i)}(a_{1j} \nabla^{(2)}V_{1, j-1}^1)\|_0
\\ & + \sum_{p+q=m}(\lambda_{1j})^{p-(i+3)}\|\nabla^{(q+i)}(a_{1j} \nabla^{(2)}a_{1, j})\|_0
\\ & + \sum_{p+q=m}(\lambda_{1j})^{p-(i+3)}\|\nabla^{(q+i)}(\nabla a_{1j} \otimes \nabla a_{1j})\|_0\Big),
\end{split}
\end{equation*}
which yields for all $m=0\ldots (K-k)(2N+4)-2N-1$:
\begin{equation*}
\begin{split}
& \|\nabla^{(m)}\Theta_{1j}\|_0 
\leq C\sum_{i=0}^N \sum_{p+q=m} \Big(\sum_{r+t=q+i}(\lambda_{1j})^{p-(i+2)}
\|\nabla^{(r)}a_{1j}\|_0\| \nabla^{(t+2)}V_{1, j-1}^1\|_0 
\\ & \hspace{4.7cm} + \sum_{r+t=q+i} (\lambda_{1j})^{p-(i+3)}\|\nabla^{(r)}a_{1j}\|_0\| \nabla^{(t+2)}a_{1j}\|_0
\\ & \hspace{4.7cm} + \sum_{r+t=q+i}(\lambda_{1j})^{p-(i+3)}\|\nabla^{(r+1)}a_{1j} \|_0\|
\nabla^{(t+1)} a_{1j}\|_0 \Big)  
\\ & \leq C\sum_{i=0}^N \sum_{p+q=m} \Big(\sum_{r+t=q+i} (\lambda_{1j})^{p-(i+2)}\delta_k^{1/2}\mu_k^r
\delta_k^{1/2}(\lambda_{1,j-1})^{t+1} + \sum_{r+t=q+i} \hspace{-2mm} 
(\lambda_{1j})^{p-(i+2)}\delta_k\mu_k^{r+t+2}\Big)
\\ &\leq C\sum_{i=0}^N \sum_{p+q=m} \Big( (\lambda_{1j})^{p-(i+2)}\delta_k(\lambda_{1,j-1})^{q+i+1}
+ (\lambda_{1j})^{p-(i+2)}\delta_k\mu_k^{q+i+2}\Big) \\ & 
\leq C \delta_k (\lambda_{1j})^{m-1}\sum_{i=0}^N
\Big( \big(\frac{\lambda_{1,j-1}}{\lambda_{1j}}\big)^{i+1}
+ \big(\frac{\mu_k}{\lambda_{1j}}\big)^{i+2}\Big) \leq C \delta_k(\lambda_{1j})^{m-1}.
\end{split}
\end{equation*}
In conclusion, the bound (\ref{W1j-diff}) becomes:
\begin{equation*}
\begin{split}
\|\nabla^{(m)}(W_{1j}-W_{1,j-1})\|_0  \leq
C\delta_k^{1/2}(\lambda_{1j})^{m-1}&\big(1+ \|\nabla v\|_0\big) +
C\delta_k \|\nabla^{(m)}(H_0x)\|_0  \\ & \mbox{for all } m =0\ldots (K-k)(2N+4)-2N-1.
\end{split}
\end{equation*}
Therefore, my means of \ref{pr_stima1}, \ref{pr_stima3}, \ref{Bbound12}, \ref{Bbound42}:
\begin{equation}\label{fin2}
\begin{split}
& \|W_{1d}-w\|_1\leq \|w_k-w\|_1 +
\sum_{j=1}^d\|W_{1j}-W_{1,j-1}\|_1\leq C\delta_0^{1/2} \big(1+ \|\nabla v\|_0\big), \\
& \|\nabla^{(m+2)}W_{1d}\|_0 \leq C \|\nabla^{(m+2)}w_k\|_0 +
\sum_{j=1}^d \|\nabla^{(m+2)}(W_{1j}-W_{1,j-1})\|_0  \\ & \qquad
\qquad\quad 
\leq  C\left\{\begin{array}{ll}\delta_{k-1}^{1/2}\mu_k^{m+1}\big(1+ \|\nabla
    v\|_0\big) & \mbox{if } k\geq 1 \\ \delta_0^{1/2}\mu_0^{m+1} &
    \mbox{if } k=0\end{array}\right\} + 
C\delta_{k}^{1/2}\lambda_k^{m+1}\big(1+ \|\nabla v\|_0\big)
\\ & \qquad \qquad\quad \leq C \delta_{k}^{1/2}\mu_{k+1}^{m+1}\big(1+ \|\nabla v\|_0\big)
\quad \mbox{ for all } m=0\ldots (K-k)(2N+4)-2N-3,
\end{split}
\end{equation}
where, similarly to (\ref{increase}), we have noted that the sequence
$\{\delta_i^{1/2}\mu_{i+1}\}_{i=0}^{K-1}$ is increasing:
\begin{equation}\label{increase2}
\frac{\delta_{i+1}^{1/2}\mu_{i+2}}{\delta_i^{1/2}\mu_{i+1}} =
\frac{1}{\sigma^{N/2}} \frac{\mu_{i+2}}{\mu_{i+1}} =
\frac{\sigma^{d+\frac{N}{2}}}{\sigma^{N/2}} \geq 1 \quad \mbox{ for } i=0\ldots K-2.
\end{equation}

\medskip

{\bf 7. (Bounds on $\mathcal{F}_{1j}$)} Recalling the definition
(\ref{F1j}) and Lemma \ref{lem_IBP}(i), we proceed to bound:
\begin{equation*}
\begin{split}
\|\nabla^{(m)}\mathcal{F}_{1j}\|_0 \leq C\sum_{p+q=m}\Big(& \sum_{r+t=q+N+1}
(\lambda_{1j})^{p-(N+2)} \|\nabla^{(r)}a_{1j}\|_0\|\nabla^{(t+2)}V_{1,{j-1}}^1\|_0  \\ &
+ \sum_{r+t=q+N+1}
(\lambda_{1j})^{p-(N+3)}  \|\nabla^{(r)}a_{1j}\|_0\|\nabla^{(t+2)} a_{1j}\|_0 \\ & 
+ \sum_{r+t=q+N+1} (\lambda_{1j})^{p-(N+3)}  
\|\nabla^{(r+1)} a_{1j}\|_0\| \nabla^{(t+1)} a_{1j}\|_0 \Big).
\end{split}
\end{equation*}
From (\ref{Fbounds}) and (\ref{Vj-1}), it hence follows that for all $m=0\ldots (K-k)(2N+4)-2N-2$:
\begin{equation}\label{F1j_bound}
\begin{split}
\|\nabla^{(m)}\mathcal{F}_{1j}\|_0 & \leq  C\sum_{p+q=m}\Big(\sum_{r+t=q+N+1}
(\lambda_{1j})^{p-(N+2)} \delta_k^{1/2}\mu_k^r\delta_k^{1/2}
(\lambda_{1,j-1})^{t+1}\\ & \hspace{2cm} + \sum_{r+t=q+N+1}
(\lambda_{1j})^{p-(N+3)} \delta_k\mu_k^{r+t+2} \Big) \\ & \leq
C\delta_k \sum_{p+q=m}\Big( (\lambda_{1j})^{p-(N+2)}(\lambda_{1,j-1})^{q+N+2}
+ (\lambda_{1j})^{p-(N+3)}(\lambda_{1,j-1})^{q+N+3}\Big) \\ &
= C\delta_k (\lambda_{1j})^m\Big(\big(\frac{\lambda_{1,j-1}}{\lambda_{1j}}\big)^{N+2}
+ \big(\frac{\lambda_{1,j-1}}{\lambda_{1j}}\big)^{N+3}\Big)\leq
C\frac{\delta_k}{\sigma^{N+2}}\lambda_{k+1}^{m}.
\end{split}
\end{equation}

\medskip

{\bf 8. (Bounds on $G_{1j}$ and the updated decomposition coefficients $b_{ij}$)} 
As in the previous step, we use Lemma \ref{lem_IBP}(i) to bound the terms in (\ref{G1j}):
\begin{equation*}
\begin{split}
\|\nabla^{(m)} G_{1j}\|_0 \leq C \sum_{i=0}^N \sum_{p+q=m}\Big(&\sum_{r+t=q+i}
(\lambda_{1j})^{p-(i+1)} \|\nabla^{(r)}a_{1j}\|_0\|\nabla^{(t+2)}V_{1,{j-1}}^1\|_0  \\ &
+ \sum_{r+t=q+i}
(\lambda_{1j})^{p-(i+2)}  \|\nabla^{(r)}a_{1j}\|_0\|\nabla^{(t+2)} a_{1j}\|_0 \\ & 
+ \sum_{r+t=q+i} (\lambda_{1j})^{p-(i+2)}  
\|\nabla^{(r+1)} a_{1j}\|_0\| \nabla^{(t+1)} a_{1j}\|_0 \Big).
\end{split}
\end{equation*}
From (\ref{Fbounds}) and (\ref{Vj-1}) we get that, for all $m=0\ldots (K-k)(2N+4)-2N-1$:
\begin{equation*}
\begin{split}
\|\nabla^{(m)} G_{1j}\|_0 & \leq C \sum_{i=0}^N \sum_{p+q=m}\Big(\sum_{r+t=q+i}
(\lambda_{1j})^{p-(i+1)} \delta_k^{1/2}\mu_k^r\delta_k^{1/2}
(\lambda_{1,j-1})^{t+1}\\ & \hspace{3cm} + \sum_{r+t=q+i}
(\lambda_{1j})^{p-(i+2)} \delta_k\mu_k^{r+t+2} \Big) \\ & \leq
C\delta_k \sum_{i=0}^N \sum_{p+q=m}\Big( (\lambda_{1j})^{p-(i+1)}(\lambda_{1,j-1})^{q+i+1}
+ (\lambda_{1j})^{p-(i+2)}(\lambda_{1,j-1})^{q+i+2}\Big) \\ &
= C\delta_k (\lambda_{1j})^m \sum_{i=0}^N \Big(\big(\frac{\lambda_{1,j-1}}{\lambda_{1j}}\big)^{i+1}
+ \big(\frac{\lambda_{1,j-1}}{\lambda_{1j}}\big)^{i+2}\Big) 
\leq C\frac{\delta_k}{\sigma}\lambda_{k+1}^{m}.
\end{split}
\end{equation*}
Recall the property (\ref{higher}). Denoting $c_{ij} = \bar a_{ij} (\sum_{j=1}^d G_{1j} )$,
Lemma \ref{lem_dec_def} implies that:
\begin{equation}\label{c_deco}
\sum_{j=1}^d G_{1j} = \hspace{-2mm} \sum_{i,j=2\ldots d,\; i\leq j}
\hspace{-2mm} c_{ij} \eta_{ij}\otimes\eta_{ij},
\end{equation}
where the coefficients $\{c_{ij}\}_{2\leq i\leq j}$ obey the same
bounds as those derived for $\sum_{j=1}^{d}G_{1j}$, namely: 
\begin{equation}\label{c_bd}
\|\nabla^{(m)} c_{ij}\|_0 \leq
C\frac{\delta_k}{\sigma}\lambda_{k+1}^{m} \quad\mbox{ for all } m
=0\ldots (K-k)(2N+4)-2N-1.
\end{equation}
In view of (\ref{Fbounds}), we note that if $\sigma\geq\sigma_0$
has been taken large enough, then:
$$\frac{C_k\delta_k}{2r_0} \leq (a_{ij})^2 - c_{ij} \leq \frac{4C_k\delta_k}{r_0}, $$
and therefore the following fields are well defined:
$$\{b_{ij}^{k+1}\in\mathcal{C}^{(K-k)(2N+4)-2N-1}(\bar\omega, \R)\}_{i,j=2\ldots d,\; i\leq j}, 
\qquad b_{ij}^{k+1} \doteq \big((a^{k+1}_{ij})^2 - c_{ij}\big )^{1/2}.$$
By (\ref{c_bd}) and (\ref{Fbounds}), we note that:
$$\|\nabla^{(m)}(b_{ij}^{k+1})^2\|_0\leq C\delta_k\mu_k^m + C
\frac{\delta_k}{\sigma}\lambda_{k+1}^{m}  \leq C\delta_k
\lambda_{k+1}^{m} \quad\mbox{ for all } m =0\ldots (K-k)(2N+4)-2N-1.$$
Applying Fa\'a di Bruno's formula, we also arrive at, for $m\neq 0$:
\begin{equation*}
\begin{split}
\|\nabla^{(m)}b_{ij}^{k+1}\|_0& \leq C \Big\|\sum_{p_1+2p_2+\ldots
  mp_m=m}\hspace{-3mm} (b_{ij}^{k+1})^{2(1/2-p_1-\ldots -p_m)}\prod_{t=1}^m
\big|\nabla^{(t)}(b ^{k+1}_{ij})^2\big|^{p_t}\Big\|_0 
\\  & \leq C \|b_{ij}^{k+1}\|_0\hspace{-3mm} \sum_{p_1+2p_2+\ldots mp_m=m} \prod_{t=1}^m
\Big(\frac{\delta_k\lambda_{k+1}^t}{C_k\delta_k /(2r_0)}\Big)^{p_t}
\leq C\delta_k^{1/2}\lambda_{k+1}^{m}. 
\end{split}
\end{equation*}
This already yields the bound \ref{Cbound22} at $k+1$:
\begin{equation}\label{Bbounds}
\|\nabla^{(m)}b_{ij}^{k+1}\|_0\leq C\delta_k^{1/2}\lambda_{k+1}^{m}
\quad\mbox{ for all } m=0\ldots (K-k)(2N+4)-2N-1.
\end{equation}
Finally, inserting (\ref{c_deco}) in (\ref{def3}), we take note of
the current defect decomposition:
\begin{equation}\label{def4}
\begin{split}
\mathcal{D}(A_0, V_{1d}, W_{1d}) & = \sum_{2\leq i\leq j} (a_{ij}^{k+1})^2\eta_{ij}\otimes\eta_{ij}
 - \sum_{j=1}^d G_{1j} + \Big(\mathcal{F} - \sum_{j=1}^d\mathcal{F}_{1j} \Big)
\\ & = \sum_{2\leq i\leq j} (b_{ij}^{k+1})^2\eta_{ij}\otimes\eta_{ij}
+\Big(\mathcal{F} - \sum_{j=1}^d\mathcal{F}_{1j} \Big).
\end{split}
\end{equation}

\medskip

{\bf 9. (Adding corrugations in the remaining codimensions $n=2\ldots \bar d$)}
We now recursively define the remaining vector fields 
$\{V_{n}\in\mathcal{C}^{(K-k)(2N+4)-2N-2}(\bar\omega,\R^{\bar d})\}_{n=2}^{\bar d}$,
$\{W_{n}\in\mathcal{C}^{(K-k)(2N+4)-2N-2}(\bar\omega,\R^{d})\}_{n=2}^{\bar d}$,
by first setting:
$$(V_{1}, W_{1}) \doteq (V_{1d}, W_{1d})$$
and then using (\ref{defi_per}) in Lemma \ref{lem_step2} for each $n=2\ldots \bar d$,
with $a=b_{ij}$ (we suppress the superscript $k+1$), $\lambda =
\mu_{k+1}$, and $\eta=\eta_{ij}$. We also define the one-to-one correspondence between the pairs of indices $ij$ and the single index $n$ as above, through the formula:
\begin{equation}\label{nij}
n = \frac{(i-2)(2d-1-i)}{2}+j, \qquad n=2\ldots \bar d, \quad
2\leq i\leq j,
\end{equation}
so that when $n=2$ then $ij= (2,2)$, and when $n=\bar{d}$ then $ij= (d,d)$. We declare:
\begin{equation}\label{VWp}
\begin{split}
& V_n\doteq V_{n-1}+\frac{b_{ij}}{\mu_{k+1}}\Gamma(\mu_{k+1} t_{\eta_{ij}})e_n,
\\ & W_n \doteq W_{n-1} - \frac{b_{ij}}{\mu_{k+1}} \Gamma(\mu_{k+1} t_{\eta_{ij}})\nabla V^n_{n-1}
+ \frac{(b_{ij})^2}{\mu_{k+1}} \bar\Gamma(\mu_{k+1}
t_{\eta_{ij}})\eta_{ij} \\ &\qquad \quad + 
\frac{b_{ij}}{\mu_{k+1}^2} \dbar\Gamma(\mu_{k+1}
t_{\eta_{ij}})\nabla b_{ij} + \Theta_n,
\end{split}
\end{equation}
where the additional perturbation term $\Theta_n$ in $W_{n}$ is:
\begin{equation}\label{omegap}
\begin{split}
& \Theta_n \doteq 2\frac{\Gamma_{1}(\mu_{k+1}t_{\eta_{ij}})}{\mu_{k+1}^{2}} b_{ij}
\sum_{s=1}^k \Gamma'(\mu_{s}t_{\eta_{ij}})\nabla b_{ij}^s
+  \frac{\Gamma_{1}(\mu_{k+1}t_{\eta_{ij}})}{\mu_{k+1}^2} b_{ij}  
\sum_{s=1}^k\mu_{s}\Gamma''(\mu_{s}t_{\eta_{ij}})b_{ij}^s\eta_{ij}\\ & \qquad\quad - 
\frac{\Gamma_{2}(\mu_{k+1}t_{\eta_{ij}})}{\mu_{k+1}^3} \nabla\Big(b_{ij}  
\sum_{s=1}^k\mu_{s}\Gamma''(\mu_{s}t_{\eta_{ij}})b_{ij}^s\Big).
\end{split}
\end{equation}
We set the principal fields at the consecutive counter $k+1$ as
\begin{equation}\label{final_step}
v_{k+1} \doteq V_{\bar d}, \quad W_{k+1} \doteq W_{\bar d},
\end{equation}
and use (\ref{step_err}) Lemma \ref{lem_step2} to get:
\begin{equation*}
\begin{split}
& \mathcal{D}(A_0, v_{k+1}, w_{k+1}) - \mathcal{D}(A_0, V_{1,d},
W_{1,d}) \\ & = -\sum_{n=2}^{\bar d}\Big(\big(\frac{1}{2}(\nabla V_{n})^T \nabla V_{n} +
\sym\nabla W_{n}\big) - \big(\frac{1}{2}(\nabla V_{n-1})^T \nabla V_{n-1} +
\sym\nabla W_{n-1}\big)\Big)\\ & = - \sum_{n=2}^{\bar d}
\Big((b_{ij})^2\eta_{ij}\otimes\eta_{ij} - 
\frac{\Gamma(\mu_{k+1}t_{\eta_{ij}})}{\mu_{k+1}} b_{ij}\nabla^{(2)}V_{n-1}^n 
+ \frac{\dbar\Gamma(\mu_{k+1}t_{\eta_{ij}})}{\mu_{k+1}^{2}}  
b_{ij}\nabla^{(2)} b_{ij} \\ & \qquad \qquad + \frac{\tbar\Gamma(\mu_{k+1} t_{\eta_{ij}})}{\mu_{k+1}^{2}}  
\nabla b_{ij}\otimes \nabla b_{ij} +\sym\nabla\Theta_n\Big).
\end{split}
\end{equation*}
Inserting the above in (\ref{def4}), we arrive at the final defect
decomposition bound:
\begin{equation}\label{def5}
\mathcal{D}(A_0, v_{k+1}, w_{k+1}) = \Big(\mathcal{F} -
\sum_{j=1}^d\mathcal{F}_{1j} \Big)  + \sum_{n=2}^{\bar d}\mathcal{H}_n - \mathcal{G},
\end{equation}
where the error fields $\{\mathcal{H}_n\}_{n=2}^{\bar d}$ and $\mathcal{G}$ are given in:
\begin{equation}\label{GH}
\begin{split}
& \mathcal{H}_n \doteq \frac{\Gamma(\mu_{k+1}t_{\eta_{ij}})}{\mu_{k+1}} b_{ij}\nabla^{(2)}V_{n-1}^n - \sym\nabla\Theta_n,\\
& \mathcal{G} \doteq \sum_{n=2}^{\bar d} \Big(\frac{\dbar\Gamma(\mu_{k+1}t_{\eta_{ij}})}{\mu_{k+1}^{2}}  
b_{ij}\nabla^{(2)} b_{ij} + \frac{\tbar\Gamma(\mu_{k+1} t_{\eta_{ij}})}{\mu_{k+1}^{2}}  
\nabla b_{ij}\otimes \nabla b_{ij}\Big).
\end{split}
\end{equation}

\medskip

{\bf 10. (Bounds on $V_n$)} Note first that by (\ref{final_step}) there holds:
\begin{equation*}
 v_{k+1} = V_{1d} + \sum_{n=2}^{\bar d} (V_n - V_{n-1}),
\qquad v_{k+1}^n=V_n^n = v_k^n +
\frac{\Gamma(\mu_{k+2}t_{\eta_{ij}})}{\mu_{k+1}} b_{ij} \quad\mbox{ for
 } n=2\ldots\bar d.
\end{equation*}
Now, in virtue of (\ref{Fbounds}), we estimate for all $m=0\ldots (K-k)(2N+4)-2N-2$:
$$\|\nabla^{(m)}(V_n-V_{n-1})\|_0\leq C\sum_{p+q=m}\mu_{k+1}^{p-1}\|\nabla^{(q)}b_{ij}\|_0
\leq C \sum_{p+q=m} \mu_{k+1}^{p-1}\delta_k^{1/2}\lambda_{k+1}^q\leq
C\delta_{k}^{1/2}\mu_{k+1}^{m-1}.$$
Applying (\ref{V1j-1}), (\ref{V1j-m}), and further \ref{pr_stima1},
\ref{Bbound12}, followed by \ref{pr_stima3}, \ref{Bbound32}, we observe:
\begin{equation*}
\begin{split}
& \|v_{k+1}^n-v^n\|_1\leq \|v_{k}-v\|_1+ C\delta_{k}^{1/2}\leq C\delta_0^{1/2},
\\ & \|\nabla^{(m+2)} v_{k+1}^n\|_0\leq \|\nabla^{(m+2)} v_{k}^n\|_0 +
C\delta_{k}^{1/2}\mu_{k+1}^{m+1} \\ & \hspace{2.5cm}\leq C\left\{\begin{array}{ll}
    \delta_{k-1}^{1/2}\mu_k^{m+1} & \mbox{if } k\geq 1 \\
    \delta_0^{1/2}\mu_0^{m+1} & \mbox{if } k = 0 
\end{array}\right\} +  C\delta_{k}^{1/2}\mu_{k+1}^{m+1} \leq
C\delta_{k}^{1/2}\mu_{k+1}^{m+1} \\ 
& \hspace{2.5cm}\mbox{for all } m=0\ldots (K-k)(2N+4)-2N-4, \quad 
n=2\ldots \bar d.
\end{split}
\end{equation*}
In the last inequality above we argued as in the proof of (\ref{fin2}), utilizing (\ref{increase2}). 
By (\ref{fin1}) we hence conclude the two inductive bounds \ref{Bbound12},
\ref{Bbound32} at the counter $k+1$.

\medskip

{\bf 11. (Bounds on $W_n$)} Similarly, as for $V_n$, we write by (\ref{final_step}):
$$w_{k+1} = W_{1d} + \sum_{n=2}^{\bar d} (W_n-W_{n-1}).$$
Recalling the definition (\ref{VWp})  and noting that $V_{n-1}^n = v_k^n$, we write:
\begin{equation}\label{Wp_diff}
\begin{split}
& \|\nabla^{(m)}(W_n-W_{n-1})\|_0\leq  C \hspace{-3mm}
\sum_{p+q+r=m}  \hspace{-3mm}\mu_{k+1}^{p-1}\|\nabla^{(q)}b_{ij}\|_0
\|\nabla^{(r+1)}v_k^n\|_0 \\ & + C\sum_{p+q=m} \mu_{k+1}^{p-1}\|\nabla^{(q)}(b_{ij})^2\|_0
+ C  \hspace{-3mm}\sum_{p+q+r=m}  \hspace{-3mm}\mu_{k+1}^{p-2}\|\nabla^{(q)}b_{ij}\|_0
\|\nabla^{(r+1)}b_{ij}\|_0 + \|\nabla^{(m)}\Theta_n\|_0.
\end{split}
\end{equation}
To estimate the first three terms in the right hand side, we use
(\ref{Bbounds}), \ref{pr_stima1}, \ref{pr_stima3}, together with
the induction assumptions \ref{Bbound12}, \ref{Bbound32}. Namely, for
all $m=0\ldots (K-k)(2N+4)-2N-2$:
\begin{equation*}
\begin{split}
& \sum_{p+q+r=m} \mu_{k+1}^{p-1}\|\nabla^{(q)}b_{ij}\|_0
\|\nabla^{(r+1)}v_k^n\|_0 \\ & \leq C\sum_{p+q=m} \mu_{k+1}^{p-1}\delta_k^{1/2}\lambda_{k+1}^q
\big(\delta_0^{1/2}+\|\nabla v\|_0\big)
+ C \hspace{-4mm}\sum_{p+q+r=m,\; r\geq 1} 
\hspace{-3mm}\mu_{k+1}^{p-1}\delta_k^{1/2}\lambda_{k+1}^q \delta_{k-1}^{1/2}\mu_k^r
\\ & \leq C \delta_k^{1/2}\mu_{k+1}^{m-1}\big(\delta_0^{1/2}+\|\nabla v\|_0\big) 
+ C \delta_k \mu_{k+1}^{m-1}
\\ & \leq C \delta_k^{1/2}\mu_{k+1}^{m-1}\big(1+\|\nabla v\|_0\big), 
\end{split}
\end{equation*}
where we also applied the fact that $\|\nabla^{(m+2)}v_k^n\|_0 \leq C
\delta_{k-1}^{1/2}\mu_k^{m+1} \leq C\delta_{k}^{1/2}\mu_{k+1}^{m+1}$
for all $k\geq 1$ by (\ref{increase2}), 
whereas at $k=0$ the same bound holds trivially in view of \ref{pr_stima3}.
Similarly:
\begin{equation*}
\begin{split}
& \sum_{p+q=m} \mu_{k+1}^{p-1}\|\nabla^{(q)}(b_{ij})^2\|_0
+ \sum_{p+q+r=m} \mu_{k+1}^{p-2}\|\nabla^{(q)}b_{ij}\|_0 
\|\nabla^{(r+1)}b_{ij}\|_0 \\ & \leq C \sum_{p+q=m}
\mu_{k+1}^{p-1}\delta_k\lambda_{k+1}^q + C \sum_{p+q+r=m}
\mu_{k+1}^{p-2}\delta_k\lambda_{k+1}^{q+r+1}\leq C\delta_k\mu_{k+1}^{m-1}.
\end{split}
\end{equation*}
Also, by definition (\ref{omegap}) and the inductive assumption \ref{Cbound22}:
\begin{equation*}
\begin{split}
\|\nabla^{(m)}\Theta_n\|_0\leq C\sum_{s=1}^k\Big( &
\sum_{p+q+r+t=m} \mu_{k+1}^{p-2}\|\nabla^{(q)}
b_{ij}\|_0\mu_s^r\|\nabla^{(t+1)}b_{ij}^s\|_0 \\ & + 
\hspace{-3mm}\sum_{p+q+r+t=m} \hspace{-3mm}\mu_{k+1}^{p-2}\|\nabla^{(q)}
b_{ij}\|_0\mu_s^{r+1}\|\nabla^{(t)}b_{ij}^s\|_0 \\ & + \hspace{-0mm}\sum_{p+q=m} \; \sum_{r+t+u=q+1} 
\hspace{-2mm}\mu_{k+1}^{p-3}\|\nabla^{(r)}b_{ij}\|_0\mu_s^{t+1}\|\nabla^{(u)}b_{ij}^s\|_0  \Big),
\end{split}
\end{equation*}
whereas, for all $m=0\ldots (K-k)(2N+4)-2N-2$:
\begin{equation*}
\begin{split}
\|\nabla^{(m)}\Theta_n\|_0 & \leq C\sum_{s=1}^k\Big(
\sum_{p+q+r+t=m} \mu_{k+1}^{p-2} \delta_k^{1/2} \lambda_{k+1}^q\mu_s^r
\delta_{s-1}^{1/2} \lambda_{s}^{t+1}+ 
\sum_{p+q+r+t=m} \mu_{k+1}^{p-2}\delta_k^{1/2} \lambda_{k+1}^q
\mu_s^{r+1}\delta_{s-1}^{1/2} \lambda_{s}^t \\ & \hspace{1.7cm} + \sum_{p+q=m} \;\sum_{r+t+u=q+1} 
\mu_{k+1}^{p-3} \delta_k^{1/2} \lambda_{k+1}^r\mu_s^{t+1}
\delta_{s-1}^{1/2} \lambda_{s}^u\Big)
\\ & \leq C \mu_{k+1}^{m-1} \sum_{s=1}^k \delta_k^{1/2}\delta_{s-1}^{1/2}\Big(
\frac{\lambda_s}{\mu_{k+1}} + \frac{\mu_s}{\mu_{k+1}} + \frac{\lambda_{k+1}\mu_s}{\mu_{k+1}^2}\Big) 
\\ & \leq C \delta_k \mu_{k+1}^{m-1}\sum_{s=1}^k \sigma^{(k-s+1)N/2}
\frac{\mu_s}{\mu_{k+1}} \leq C\delta_k\mu_{k+1}^{m-1},
\end{split}
\end{equation*}
as  $s=1\ldots k$, there holds $\mu_{k+1}/\mu_s\geq \sigma^{(k+1-s)N/2}$.
In summary, (\ref{Wp_diff}) becomes:
\begin{equation*}
\begin{split}
\|\nabla^{(m)}(W_n-W_{n-1})\|_0& \leq C
\delta_k^{1/2}\mu_{k+1}^{m-1}\big(1+\|\nabla v\|_0\big)  
\\ & \mbox{for all } m=0\ldots (K-k)(2N+4)-2N-2.
\end{split}
\end{equation*}
It now follows, through (\ref{fin2}), the inductive bounds
\ref{Bbound12}, \ref{Bbound42} at the counter value  $k+1$:
\begin{equation*}
\begin{split}
& \|w_{k+1}-w\|_1\leq \|w_k-w\|_1 + \sum_{n=2}^{\bar d} \|W_n-W_{n-1}\|_1\leq 
C \delta_0^{1/2}\big(1+\|\nabla v\|_0\big),
\\ & \|\nabla^{(m+2)}w_{k+1}\|_0\leq \|\nabla^{(m+2)}W_{1d}\|_0 + 
\sum_{n=2}^{\bar d} \|\nabla^{(m+2)}(W_n-W_{n-1})\|_0 \\ &
\hspace{2.56cm} \leq C\delta_k^{1/2}\mu_{k+1}^{m+1}\big(1+\|\nabla v\|_0\big)  
 \quad \mbox{for all } m =0\ldots (K-k)(2N+4)-2N-4.
\end{split}
\end{equation*}

\medskip

{\bf 12. (Bounds on $\mathcal{G}$ and $\mathcal{H}_n$)}
Recall the definition (\ref{GH}). From (\ref{Bbounds}) we directly see that:
\begin{equation}\label{Gbound}
\begin{split}
\|\nabla^{(m)}\mathcal{G}\|_0\leq C\sum_{p+q=m}
\mu_{k+1}^{p-2}\delta_k\lambda_{k+1}^{q+2} & \leq C
\delta_k\mu_{k+1}^m\big(\frac{\lambda_{k+1}}{\mu_{k+1}}\big)^2 \leq
C\frac{\delta_k}{\sigma^N}\mu_{k+1}^m 
\\ & \mbox{for all } m= 0\ldots (K-k)(2N+4)-2N-3.. 
\end{split}
\end{equation}
Since $V^n_{n-1}= v_k^n$, and since, by our construction:
$$v_k^n = v_0^n + \sum_{s=1}^k  
\frac{\Gamma(\mu_s t_{\eta_{ij}})}{\mu_s}b_{ij}^s,$$ 
we can rewrite the formula for $\mathcal{H}_n$ as:
\begin{equation}\label{H_again}
\begin{split}
\mathcal{H}_n & = \frac{\Gamma(\mu_{k+1}t_{\eta_{ij}})}{\mu_{k+1}} b_{ij}\nabla^{(2)}
\Big(v_0^n + \sum_{s=1}^k \frac{\Gamma(\mu_s t_{\eta_{ij}})}{\mu_j^s}b_{ij}^s\Big) -\sym\nabla\Theta_n
\\ & = \frac{\Gamma(\mu_{k+1}t_{\eta_{ij}})}{\mu_{k+1}} b_{ij}\nabla^{(2)}v_0^n 
+ \sum_{s=1}^k \frac{\Gamma(\mu_{k+1}t_{\eta_{ij}})}{\mu_{k+1}} b_{ij}
\frac{\Gamma(\mu_s t_{\eta_{ij}})}{\mu_s}\nabla^{(2)}b_{ij}^s 
\\ & \quad + 2 \sum_{s=1}^k \frac{\Gamma(\mu_{k+1}t_{\eta_{ij}})}{\mu_{k+1}} b_{ij}
\Gamma'(\mu_s t_{\eta_{ij}})\,\sym\big(\nabla b_{ij}^s\otimes \eta_{ij}\big)
\\ & \quad + \sum_{s=1}^k \frac{\Gamma(\mu_{k+1}t_{\eta_{ij}})}{\mu_{k+1}} b_{ij}
\mu_s \Gamma''(\mu_s t_{\eta_{ij}})b_{ij}^s\eta_{ij}\otimes \eta_{ij}
-\sym\nabla \Theta_n.
\end{split}
\end{equation}
The components of the third term in the right hand side above can be
decomposed as:
\begin{equation*}
\begin{split}
& \frac{\Gamma(\mu_{k+1}t_{\eta_{ij}})}{\mu_{k+1}} b_{ij}
\Gamma'(\mu_s t_{\eta_{ij}})\,\sym\big(\nabla b_{ij}^s\otimes
\eta_{ij}\big) \\ & = 
\sym\nabla\Big(\frac{\Gamma_1(\mu_{k+1}t_{\eta_{ij}})}{\mu_{k+1}^2} b_{ij} 
\Gamma'(\mu_s t_{\eta_{ij}})\nabla b_{ij}^s\Big)  -
\frac{\Gamma_1(\mu_{k+1}t_{\eta_{ij}})}{\mu_{k+1}^2} \sym\nabla\Big(b_{ij}
\Gamma'(\mu_s t_{\eta_{ij}})(\nabla b_{ij}^s\Big),
\end{split}
\end{equation*}
and, similarly, we express components of the fourth term as:
\begin{equation*}
\begin{split}
& \frac{\Gamma(\mu_{k+1}t_{\eta_{ij}})}{\mu_{k+1}} b_{ij}
\mu_s \Gamma''(\mu_s t_{\eta_{ij}})b_{ij}^s\eta_{ij}\otimes \eta_{ij}
\\ & = \sym\nabla\Big(\frac{\Gamma_1(\mu_{k+1}t_{\eta_{ij}})}{\mu_{k+1}^2} b_{ij}
\mu_s \Gamma''(\mu_s t_{\eta_{ij}})b_{ij}^s\eta_{ij}\Big) 
- \frac{\Gamma_1(\mu_{k+1}t_{\eta_{ij}})}{\mu_{k+1}^2} \sym\Big(\nabla \big(b_{ij}
\mu_s \Gamma''(\mu_s t_{\eta_{ij}})b_{ij}^s\big)\otimes \eta_{ij}\Big)
\\ & = \sym\nabla\Big(\frac{\Gamma_1(\mu_{k+1}t_{\eta_{ij}})}{\mu_{k+1}^2} b_{ij}
\mu_s \Gamma''(\mu_s t_{\eta_{ij}})b_{ij}^s\eta_{ij}\Big) 
- \sym\nabla\Big(\frac{\Gamma_2(\mu_{k+1}t_{\eta_{ij}})}{\mu_{k+1}^3} \nabla \big(b_{ij}
\mu_s \Gamma''(\mu_s t_{\eta_{ij}})b_{ij}^s\big)\Big) 
\\ & \quad + \frac{\Gamma_2(\mu_{k+1}t_{\eta_{ij}})}{\mu_{k+1}^3}
\nabla^{(2)}\Big( b_{ij}\mu_s \Gamma''(\mu_s t_{\eta_{ij}})b_{ij}^s\Big),
\end{split}
\end{equation*}
Recalling the formula for $\Theta_n$ in (\ref{omegap}), the equation
(\ref{H_again}) hence becomes:
\begin{equation}\label{H_again2}
\begin{split}
\mathcal{H}_n & = \frac{\Gamma(\mu_{k+1}t_{\eta_{ij}})}{\mu_{k+1}} b_{ij}\nabla^{(2)}v_0^n 
+ \sum_{s=1}^k \frac{\Gamma(\mu_{k+1}t_{\eta_{ij}})}{\mu_{k+1}} b_{ij}
\frac{\Gamma(\mu_s t_{\eta_{ij}})}{\mu_s}\nabla^{(2)}b_{ij}^s 
\\ & \quad - 2 \sum_{s=1}^k
\frac{\Gamma_1(\mu_{k+1}t_{\eta_{ij}})}{\mu_{k+1}^2} \sym\nabla\Big(b_{ij}
\Gamma'(\mu_s t_{\eta_{ij}})\nabla b_{ij}^s\Big)
\\ & \quad + \sum_{s=1}^k \frac{\Gamma_2(\mu_{k+1}t_{\eta_{ij}})}{\mu_{k+1}^3}
\nabla^{(2)}\Big( b_{ij}\mu_s \Gamma''(\mu_s t_{\eta_{ij}})b_{ij}^s\Big).
\end{split}
\end{equation}
We finally estimate the four terms above, together with their derivatives of
orders $m=0\ldots (K-k)(2N+4)-2N-3$. For the first term, we use
(\ref{Bbounds}) and \ref{pr_stima3} to get:
\begin{equation*}
\begin{split}
& \big\|\nabla^{(m)}\Big(\frac{\Gamma(\mu_{k+1}t_{\eta_{ij}})}{\mu_{k+1}}
b_{ij}\nabla^{(2)}v_0^n \Big)\big\|_0 \leq
C\sum_{p+q+r=m}\mu_{k+1}^{p-1}\|\nabla^{(q)}b_{ij}\|_0
\|\nabla^{(r+2)}v_0\|_0 \\ & \leq C\sum_{p+q+r=m}\mu_{k+1}^{p-1}\delta_k^{1/2}\lambda_{k+1}^q
\delta_0^{1/2}\mu_0^{r+1}\leq C \delta_{k}\sigma^{Nk/2}\mu_{k+1}^m\frac{\mu_0}{\mu_{k+1}}
\leq C\frac{\delta_k}{\sigma^N}\mu_{k+1}^m,
\end{split}
\end{equation*}
in view of $\mu_{k+1}/\mu_0\geq\sigma^{(k+2)N/2}$. This
is the only estimate necessitating the larger increase from
$\lambda_{1}$ to $\mu_1$ than from all the subsequent $\lambda_{k+1}$ to $\mu_{k+1}$.
For the second, third and fourth terms in (\ref{H_again2}), we fix $s=1\ldots k$ and use
(\ref{Bbounds}) together with 
the inductive assumption \ref{Cbound22}, to bound the derivatives of
$b_{ij}^s$. Thus, the components of the second term obey:
\begin{equation*}
\begin{split}
& \big\|\nabla^{(m)}\Big(\frac{\Gamma(\mu_{k+1}t_{\eta_{ij}})}{\mu_{k+1}} b_{ij}
\frac{\Gamma(\mu_s t_{\eta_{ij}})}{\mu_s}\nabla^{(2)}b_{ij}^s
\Big)\big\|_0\leq C\sum_{p+q+r+t=m}\mu_{k+1}^{p-1}\|\nabla^{(q)}b_{ij}\|_0
\mu_s^r \|\nabla^{(t+2)}b^s_{ij}\|_0\\ &  \leq
C\sum_{p+q+r+t=m}\mu_{k+1}^{p-1} \delta_k^{1/2}\lambda_{k+1}^q\mu_s^{r-1}\delta_{s-1}^{1/2}
\lambda_s^{t+2} \leq C\delta_k
\sigma^{(k-s+1)N/2}\mu_{k+1}^m\frac{\lambda_s^2}{\mu_{k+1}
  \mu_s}\leq C\frac{\delta_k}{\sigma^N}\mu_{k+1}^m,
\end{split}
\end{equation*}
as $\mu_{k+1}\mu_s/ \lambda_s^2\geq \sigma^{N/2} \sigma^{((k+1)-(s-1))N/2}=
\sigma^{(k-s+3)N/2}$.
For the third term in (\ref{H_again2}), we get:
\begin{equation*}
\begin{split}
& \big\|\nabla^{(m)}\Big(\frac{\Gamma_1(\mu_{k+1}t_{\eta_{ij}})}{\mu_{k+1}^2} \sym\nabla\big(b_{ij}
\Gamma'(\mu_s t_{\eta_{ij}})\nabla b_{ij}^s\big) \Big)\big\|_0 \\ & \leq C\sum_{p+q=m}
\; \sum_{r+t+u=q+1} \mu_{k+1}^{p-2}\|\nabla^{(r)}b_{ij}\|_0
\mu_s^t \|\nabla^{(u+1)}b^s_{ij}\|_0 \\ & \leq C\sum_{p+q=m}\;
\sum_{r+t+u=q+1} \mu_{k+1}^{p-2}\delta_k^{1/2}\lambda_{k+1}^r\mu_s^t\delta_{s-1}^{1/2}\lambda_s^{u+1}
\leq C \delta_k^{1/2}\delta_{s-1}^{1/2} \sum_{p+q=m}\mu_{k+1}^{p-2}\lambda_{k+1}^{q+1}\lambda_s
\\ & \leq C \delta_k \sigma^{(k-s+1)N/2}\mu_{k+1}^m\frac{\lambda_{k+1}\lambda_s}{\mu_{k+1}^2}
\leq C\frac{\delta_k}{\sigma^N}\mu_{k+1}^m,
\end{split}
\end{equation*}
as $\mu_{k+1}^2/(\lambda_{k+1}\lambda_s)\geq 
\sigma^{N/2}\sigma^{((k+1)-(s-1))N/2}= \sigma^{(k-s+3)N/2}$.
For the last term  in (\ref{H_again2}), we get:
\begin{equation*}
\begin{split}
& \big\|\nabla^{(m)}\Big(\frac{\Gamma_2(\mu_{k+1}t_{\eta_{ij}})}{\mu_{k+1}^3}
\nabla^{(2)}\big( b_{ij}\mu_s \Gamma''(\mu_s t_{\eta_{ij}})b_{ij}^s\big)
\Big)\big\|_0\\ & \leq C\sum_{p+q=m}\; \sum_{r+t+u=q+2} \mu_{k+1}^{p-3}\|\nabla^{(r)}b_{ij}\|_0
\mu_s^{t+1} \|\nabla^{u}b^s_{ij}\|_0 \\ & \leq C\sum_{p+q=m}
\; \sum_{r+t+u=q+2} \mu_{k+1}^{p-3}
\delta_k^{1/2}\lambda_{k+1}^r\mu_s^{t+1} \delta_{s-1}^{1/2}\lambda_s^u 
\leq C \delta_k^{1/2}\delta_{s-1}^{1/2} \sum_{p+q=m}\mu_{k+1}^{p-3}\lambda_{k+1}^{q+2}\mu_s
\\ & \leq C \delta_k \sigma^{(k-s+1)N/2}\mu_{k+1}^m\frac{\lambda_{k+1}^2\mu_s}{\mu_{k+1}^3}
\leq C\frac{\delta_k}{\sigma^N}\mu_{k+1}^m,
\end{split}
\end{equation*}
since  $\mu_{k+1}^3/(\lambda_{k+1}^2\mu_s)\geq 
\sigma^{N}\sigma^{((k+1)-s))N/2}= \sigma^{(k-s+3)N/2}$. This finally implies:
\begin{equation}\label{Hbound}
\|\nabla^{(m)}\mathcal{H}_n\|_0\leq C
\frac{\delta_k}{\sigma^N}\mu_{k+1}^m \quad \mbox{ for all } m =0\ldots (K-k)(2N+4)-2N-3.
\end{equation}
In view of the decomposition (\ref{def5}) and  the bounds on the
derivatives of the fields $\mathcal{F}$,
$\{\mathcal{F}_{1j}\}_{j=1}^d$, $\mathcal{G}$, $\{\mathcal{H}_n\}_{n=2}^{\bar d}$, 
given in (\ref{Fbounds}), (\ref{F1j_bound}), (\ref{Gbound}),
(\ref{Hbound}), we arrive at:
\begin{equation*}
\begin{split}
\|\nabla^{(m)}\mathcal{D}_{k+1}\|_0\leq C\Big(
\frac{\delta_k}{\sigma^{2N}}\mu_k^m + \frac{\delta_k}{\sigma^{N+2}}\lambda_{k+1}^m 
& + \frac{\delta_k}{\sigma^N}\mu_{k+1}^m \Big)\leq
C\frac{\delta_k}{\sigma^N}\mu_{k+1}^m  \\ & \mbox{ for all } m =0\ldots (K-k)(2N+4)-2N-3,
\end{split}
\end{equation*}
proving the final inductive estimate \ref{Bbound52} at the counter $k+1$.

\medskip

{\bf 13. (End of proof)} After the total of $K$ steps, we declare:
$$\tilde v \doteq v_K, \qquad \tilde w \doteq w_K, \qquad \tilde{\mathcal{D}} = 
(A-A_0) + \mathcal{D}_K = A
- \big(\frac{1}{2}(\nabla \tilde v)^T\nabla \tilde v + \sym\nabla \tilde w\big).$$
Recalling \ref{Bbound12}-\ref{Bbound42} and (\ref{DEF}), we conclude that:
\begin{align*}
& \|\tilde v - v\|_1\leq C\delta_0^{1/2} = C\|\mathcal{D}\|_0^{1/2}, \\
& \|\tilde w-w\|_1\leq C\delta_0^{1/2} \big(1+ \|\nabla v\|_0\big) =
C\|\mathcal{D}\|_0^{1/2} \big(1+ \|\nabla v\|_0\big),\\
& \|\nabla^{(2)}\tilde v\|_0\leq C\delta_{K-1}^{1/2}\mu_K = 
C\frac{\delta_0^{1/2}}{\sigma^{(K-1)N/2}}\mu_0\sigma^{d+N+(d+\frac{N}{2})(K-1)}
\\ & \hspace{3.7cm}= C\delta_0^{1/2}\mu_0 \sigma^{d+N+d(K-1)} = C\mathcal{M}\sigma^{dK+N},\\ 
& \|\nabla^{(2} \tilde w\|_0\leq C\delta_{K-1}^{1/2}\mu_K\big(1+\|\nabla v\|_0\big)
= C\mathcal{M}\sigma^{dK+N}\big(1+\|\nabla v\|_0\big),
\end{align*}
while \ref{pr_stima2} and \ref{Bbound52} yield:
\begin{align*}
\|\tilde{\mathcal{D}}\|_0 & \leq \|A-A_0\|_0 +\|\mathcal{D}_K\|_0\leq 
C_A\frac{\|A\|_{r,\beta}}{\mathcal{M}^{r+\beta}} \delta_0^{(r+\beta)/2}
+ C\delta_K \\ & = (C+C_A)\Big( \frac{\|A\|_{r,\beta}}{\mathcal{M}^{r+\beta}}
\|\mathcal{D}\|_0^{(r+\beta)/2} + \frac{\|\mathcal{D}\|_0}{\sigma^{NK}}\Big).
\end{align*}
These are precisely the claimed bounds \ref{Abound12}-\ref{Abound32}. The proof is done.
\endproof

\section{The Nash-Kuiper scheme and a proof of Theorem \ref{th_final}}\label{sec4}

The proof of Theorem \ref{th_final} relies on iterating Theorem
\ref{thm_stage} via "induction on stages" which is the essence of 
the so-called {\em Nash-Kuiper scheme}. We now quote the result in \cite[Theorem 5.1]{lew_conv},
which is similar to the argument in \cite[section 6]{CDS}. Recall our notation:
$$\mathcal{D}(A, v, w) \doteq A-\big(\frac{1}{2}(\nabla v)^T\nabla v + \sym\nabla w\big).$$

\begin{theorem}\label{th_NK}
Let $\omega\subset\R^d$ be an open, bounded, $d$-dimensional set, and
let $A\in\mathcal{C}^{r,\beta}(\bar\omega, \R^{d\times d}_\sym)$ with
$r\in \{0,1\}$, $\beta\in (0,1]$, $r+\beta<2$.
Assume that $k, J, S\geq 1$ are such that the statement of Theorem \ref{thm_stage}
holds true with $k$ replacing the specific codimension $\bar d$, $J$ replacing the exponent $dK+N$ in
\ref{Abound22}, and $S$ replacing the exponent $KN$ in \ref{Abound32}, namely:

\begin{equation*}
\left[\quad  \begin{minipage}{12cm}
There exists $\sigma_0>1$ depending only on $\omega, k, J, S$ such
that, given any $v\in\mathcal{C}^2(\bar\omega,\R^k)$, $w\in\mathcal{C}^2(\bar\omega,\R^d)$
and any $\mathcal{M}$, $\sigma$ with:
\begin{equation*}
\begin{split}
& \mathcal{D}\doteq \mathcal{D}(A, v,w) \quad \mbox{ satisfies } \quad
0< \|\mathcal{D}\big\|_0\leq 1,\\
& \mathcal{M}\geq \max\big\{\|v\|_2, \|w\|_2, 1\big\}, \qquad \sigma\geq \sigma_0,
\end{split}
\end{equation*}
there exists $\tilde v\in\mathcal{C}^2(\bar\omega,\R^k)$,
$\tilde w\in\mathcal{C}^2(\bar\omega,\R^d)$, obeying the bounds:
\begin{equation*}
\begin{split}
& \|\tilde v - v\|_1\leq C\|\mathcal{D}\|_0^{1/2}, \qquad\qquad
\|\tilde w -w\|_1\leq C\|\mathcal{D}\|_0^{1/2} \big(1+ \|\nabla v\|_0\big), 
\vspace{3mm} \\
& \|\nabla^2\tilde v\|_0\leq C \mathcal{M}\sigma^{J}, \hspace{12.5mm}
\|\nabla^2\tilde w\|_0\leq C \mathcal{M}\sigma^J \big(1+\|\nabla v\|_0\big),  
\vspace{3mm} \\ 
& \displaystyle{\|\mathcal{D}(A, \tilde v, \tilde w)\|_0\leq
  C\Big(\frac{\|A\|_{r,\beta}}{\mathcal{M}^{r+\beta}}\|\mathcal{D}\|_0^{(r+\beta)/2}
  + \frac{\|\mathcal{D}\|_0}{\sigma^{S}}\Big)},
\end{split}
\end{equation*}
with constants $C$ depending only on $\omega, k, r, \beta, J, S$.
\end{minipage}\quad \right]
\end{equation*}
Then, for every $v\in\mathcal{C}^2(\bar\omega,\R^k)$,
$w\in\mathcal{C}^2(\bar\omega,\R^d)$ such that:
$$\mathcal{D}\doteq \mathcal{D}(A, v,w) \quad \mbox{ satisfies } \quad 0<
\|\mathcal{D}\big\|_0\leq 1,$$
and for every $\alpha$ in the range:
\begin{equation*}
0< \alpha <\min\Big\{\frac{r+\beta}{2},\frac{1}{1+2J/S}\Big\},
\end{equation*}
there exist $\tilde v\in\mathcal{C}^{1,\alpha}(\bar\omega,\R^k)$ and
$\tilde w\in\mathcal{C}^{1,\alpha}(\bar\omega,\R^d)$ with the following properties:
\begin{align*}
& \|\tilde v - v\|_1\leq C\|\mathcal{D}\|_0^{1/2}, \quad \|\tilde w -
w\|_1\leq C\|\mathcal{D}\|_0^{1/2}(1+\|\nabla v\|_0), \vspace{1mm}\\
& A-\big(\frac{1}{2}(\nabla \tilde v)^T\nabla \tilde v + \sym\nabla
\tilde w\big) =0 \quad\mbox{ in }\; \bar\omega, 
\end{align*}
where the constants $C$ depend only on $\omega, k, r,\beta, J, S$.
\end{theorem}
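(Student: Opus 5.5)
The proof is the standard Nash--Kuiper iteration of the stage statement, as in \cite[section 6]{CDS} and \cite[Theorem 5.1]{lew_conv}. We build a sequence $(v_n,w_n)$ with $(v_0,w_0)=(v,w)$, each obtained from the previous one by the bracketed stage hypothesis. We then show that it converges in $\mathcal{C}^{1,\alpha}$ while the defect tends to zero.

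\textbf{Choice of parameters.} Fix $\alpha$ in the admissible range and write $\delta_0=\|\mathcal{D}\|_0$. We prescribe a geometric decay $\delta_n\doteq \delta_0\sigma^{-nS'}$ for the defects, where $S'<S$ is close to $S$ and $\sigma\geq\sigma_0$ is a large parameter. We also set
$$\mathcal{M}_n\doteq \mathcal{M}_0\sigma^{nJ},\qquad \mathcal{M}_0\geq \max\{\|v\|_2,\|w\|_2,1\}$$
large. In the spirit of \ref{Abound22}, the inductive hypotheses at level $n$ are:
\begin{itemize}
\item[(i)] $\|\mathcal{D}(A,v_n,w_n)\|_0\leq\delta_n$;
\item[(ii)] $\|v_n\|_2,\|w_n\|_2\leq \mathcal{M}_n$, up to the harmless factor $(1+\|\nabla v\|_0)$, which stays bounded since $\|\nabla v_n\|_0\leq \|\nabla v\|_0+C\sum_i\delta_i^{1/2}$.
\end{itemize}
Applying the stage hypothesis with $\mathcal{M}=\mathcal{M}_n$ then gives the $\mathcal{C}^2$ bound at level $n+1$ directly. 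It also gives a new defect at most
$$C\Big(\|A\|_{r,\beta}\,\mathcal{M}_n^{-(r+\beta)}\delta_n^{(r+\beta)/2}+\delta_n\sigma^{-S}\Big).$$
The second term is $\leq \delta_{n+1}/2$ once $\sigma^{S-S'}\geq 2C$.

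\textbf{Main obstacle.} The main difficulty is the first term: we must ensure
$$\|A\|_{r,\beta}\,\mathcal{M}_n^{-(r+\beta)}\delta_n^{(r+\beta)/2}\lesssim \delta_{n+1}$$
uniformly in $n$. Substituting the geometric laws, this is the requirement that
$$n\Big(J(r+\beta)+S'\tfrac{r+\beta}{2}-S'\Big)\geq -S' +\text{const},$$
i.e.\ that the exponent of $\sigma^n$ is nonnegative, together with a large enough $\mathcal{M}_0$ to absorb the constants at $n=0$. If the exponent is negative, this fails for large $n$. To avoid that, I would first try the approach of \cite{CDS}: decouple the choice by taking $\mathcal{M}_n=\mathcal{M}_0\sigma^{nJ'}$ with $J'\geq J$ chosen so that the $A$-term balances. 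The stage hypothesis still applies with any larger $\mathcal{M}$, because the assumption only asks for $\mathcal{M}\geq \max\{\|v\|_2,\|w\|_2,1\}$. Tracking the exponents, the condition $\alpha<\frac{r+\beta}{2}$ should emerge precisely from this balance, while $\alpha<(1+2J/S)^{-1}$ comes from the interpolation step below. This exponent bookkeeping, and the choice of $\mathcal{M}_0$ and $\sigma$ depending only on $\omega,k,r,\beta,J,S$, $\alpha$ and $\|A\|_{r,\beta}$, is the step I expect to need the most care.

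\textbf{Convergence.} The stage estimates give
$$\|v_{n+1}-v_n\|_1\leq C\delta_n^{1/2},\qquad \|v_{n+1}-v_n\|_2\leq C\mathcal{M}_n\sigma^{J}.$$
Interpolation in H\"older spaces then yields
$$\|v_{n+1}-v_n\|_{1,\alpha}\leq C\,\delta_n^{(1-\alpha)/2}\big(\mathcal{M}_n\sigma^J\big)^{\alpha},$$
which behaves like $\sigma^{n(\alpha J'-(1-\alpha)S'/2)}$. This decays geometrically in $n$ exactly in the stated range of $\alpha$, after letting $S'\to S$ and $J'\to J$ or the balanced value. The same holds for $w_n$, with the extra factor $(1+\|\nabla v\|_0)$.

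Hence $(v_n,w_n)$ is Cauchy in $\mathcal{C}^{1,\alpha}$. The limit $(\tilde v,\tilde w)$ has zero defect, since $\delta_n\to0$ and $\mathcal{D}$ is continuous in $\mathcal{C}^1$. Summing the geometric series $\sum_n\delta_n^{1/2}\leq C\delta_0^{1/2}$ gives the claimed $\mathcal{C}^1$ closeness bounds.

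One technicality remains: the stage hypothesis requires $0<\|\mathcal{D}\|_0$. If some defect vanishes exactly, we simply stop the iteration there.
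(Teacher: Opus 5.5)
Your proposal is correct and follows the same route as the paper. The paper does not reprove Theorem \ref{th_NK} but invokes the Nash--Kuiper iteration of \cite[Theorem 5.1]{lew_conv} and \cite[section 6]{CDS}, and your device of inflating $\mathcal{M}_n=\mathcal{M}_0\sigma^{nJ'}$, with $J'$ slightly above $\max\{J,\,S'\frac{2-(r+\beta)}{2(r+\beta)}\}$, absorbs the mollification term and gives exactly $\alpha<\min\{\frac{r+\beta}{2},\frac{1}{1+2J/S}\}$ once $S'\to S$.

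Only bookkeeping needs correcting: $\mathcal{M}_0$ must also depend on $\delta_0$, $\|v\|_2$, $\|w\|_2$, and $\sigma$ must beat the per-step factor $C(1+\|\nabla v\|_0)$ from the $\mathcal{C}^2$ bound on $w$. Neither dependence affects the final $\mathcal{C}^1$ constants.
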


\noindent We remark that \cite[Theorem 5.1]{lew_conv} is stated as above, but
with $r=0$. The proof, allowing now for $r=1$, follows by
a straightforward inspection of the proof in \cite{lew_conv}. 

\medskip

\noindent Clearly, Theorem \ref{th_NK} and Theorem \ref{thm_stage} yield together
the next result below, because:
\begin{equation*}
\begin{split}
\frac{1}{1+2J/S} = \frac{S}{S+2J} = \frac{KN}{KN+ (dK+N)} & \to
1\quad \mbox{ as } \; K, N\to\infty.
\end{split}
\end{equation*}

\medskip

\begin{corollary}\label{th_NKH}
Let $\omega\subset\R^d$ be an open, bounded set, and
let $A\in\mathcal{C}^{r,\beta}(\bar\omega, \R^{d\times d}_\sym)$ with
$r\in \{0,1\}$, $\beta\in (0,1]$, $r+\beta<2$.
Fix $\alpha$ as in (\ref{VKrange}).
Then, given $v\in\mathcal{C}^2(\bar\omega,\R^{\bar d})$, $w\in\mathcal{C}^2(\bar\omega,\R^d)$
such that:
$$\mathcal{D}\doteq\mathcal{D}(A,v,w) \quad\mbox{ satisfies } \quad 0<\|\mathcal{D}\|_0\leq 1,$$
there exist  $\tilde v\in\mathcal{C}^2(\bar\omega,\R^{\bar d})$,
$\tilde w\in\mathcal{C}^2(\bar\omega,\R^d)$
with the following properties:
\begin{align*}
& \|\tilde v - v\|_1\leq C\|\mathcal{D}\|_0^{1/2}, \quad \|\tilde w -
w\|_1\leq C\|\mathcal{D}\|_0^{1/2}(1+\|\nabla v\|_0), \vspace{1mm}\\
& A-\big(\frac{1}{2}(\nabla \tilde v)^T\nabla \tilde v + \sym\nabla
\tilde w\big) =0 \quad\mbox{ in }\; \bar\omega, 
\end{align*}
The constants $C$ depend only
on $\omega, r, \beta$ and $\alpha$. 
\end{corollary}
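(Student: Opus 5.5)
The plan is to combine Theorem \ref{th_NK} with Theorem \ref{thm_stage} by choosing the integers $N,K$ according to $\alpha$. The statement concerns the specific codimension $\bar d$, and $\tilde v,\tilde w$ are really obtained in $\mathcal{C}^{1,\alpha}$ as in Theorem \ref{th_NK}.

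First, fix $\alpha$ satisfying (\ref{VKrange}). Since $r+\beta<2$, this means $\alpha<(r+\beta)/2<1$. Next choose integers $N,K\geq 1$, depending only on $d$ and $\alpha$, so large that
\begin{equation*}
\alpha<\frac{KN}{KN+dK+N}=\frac{1}{1+2J/S}, \qquad J\doteq dK+N,\quad S\doteq KN.
\end{equation*}
This choice is possible because $\frac{KN}{KN+dK+N}=\big(1+\frac{d}{N}+\frac{1}{K}\big)^{-1}\to 1$ as $N,K\to\infty$. For instance, take $N=K$ large enough that $1+(d+1)/N<1/\alpha$.

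With these $N,K$ fixed, Theorem \ref{thm_stage} supplies the threshold $\sigma_0$, which depends only on $\omega,N,K$ and hence only on $\omega,\alpha$. The same theorem gives exactly the bracketed hypothesis of Theorem \ref{th_NK}, with $k=\bar d$, $J=dK+N$, $S=KN$, and with constants depending on $\omega,r,\beta,N,K$. The only point to check is that the hypothesis class matches. Both statements take $v,w\in\mathcal{C}^2$, assume $0<\|\mathcal{D}\|_0\leq 1$, require $\mathcal{M}\geq\max\{\|v\|_2,\|w\|_2,1\}$ and $\sigma\geq\sigma_0$, and assert identical bounds. So nothing beyond relabeling is needed.

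Applying Theorem \ref{th_NK} to the given $v,w$ then produces $\tilde v,\tilde w$ with the $\mathcal{C}^1$ bounds $C\|\mathcal{D}\|_0^{1/2}$ and $C\|\mathcal{D}\|_0^{1/2}(1+\|\nabla v\|_0)$, solving $\mathcal{D}(A,\tilde v,\tilde w)=0$ and of regularity $\mathcal{C}^{1,\alpha}$. The constants depend on $\omega,\bar d,r,\beta,J,S$. Since $J$ and $S$ are functions of $\alpha$ and $d$, and $\bar d$ is a function of $d$, these constants depend only on $\omega,r,\beta,\alpha$, as claimed.

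There is no real obstacle here. The substantive work lies in Theorem \ref{thm_stage} and in the iteration of Theorem \ref{th_NK}, both of which are assumed. The only care needed is in the order of choices: $N,K$ must be fixed from $\alpha$ before $\sigma_0$ is invoked, so that every constant is independent of the data $v,w$.
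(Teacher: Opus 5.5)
Your argument is the paper's own: apply Theorem~\ref{th_NK} with $k=\bar d$, $J=dK+N$, $S=KN$, whose bracketed hypothesis is exactly Theorem~\ref{thm_stage}, after fixing $N,K$ (depending only on $d,\alpha$) so large that $\alpha<\frac{1}{1+2J/S}$, which is possible because this threshold tends to $1$. One arithmetic correction is needed: $\frac{1}{1+2J/S}=\frac{S}{S+2J}=\frac{KN}{KN+2(dK+N)}$, not $\frac{KN}{KN+dK+N}$ (the paper's display drops the same factor $2$), so with $N=K$ you must require $1+2(d+1)/N<1/\alpha$. Your condition $1+(d+1)/N<1/\alpha$ is too weak: for example, $d=2$, $\alpha=\frac12$, $N=K=4$ gives threshold $0.4<\alpha$. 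With this fix everything else in your argument goes through unchanged.
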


\bigskip

\noindent The proof of Theorem \ref{th_final} proceeds in the same manner as \cite[Theorem 1.1]{lew_conv}: we first use the basic stage-like construction to reduce the defect magnitude $\|\mathcal{D}\|_0$
below $1$, after which Corollary \ref{th_NKH} yields the final result. \endproof

\section{Extension to the isometric immersion system }\label{sec_iso_imm}

Using the same ideas underlying the proof of Theorem \ref{th_final}, one
can show the following full flexibility result for the isometric
immersion system (\ref{II}):

\begin{theorem}\label{th_final2}
Let $\omega\subset\R^d$ be an open, bounded set, diffeomorphic to the unit ball $B_1\subset \R^d$, let  $g\in
\mathcal{C}^{r,\beta}(\bar\omega, \R^{d\times d}_{\sym,>})$  be 
a Riemannian metric on $\bar\omega$, and $k$ be a codimension satisfying:
$$k\geq \bar d.$$
Then, for every immersion $u\in \mathcal{C}^1(\bar\omega,\R^{d+k})$ such that:
$$(\nabla u)^T\nabla u < g \quad \mbox{ in } \; \bar\omega,$$
for every $\epsilon>0$, and for every regularity exponent $\alpha$ in:
\begin{equation*}
0<\alpha<\min\Big\{\frac{r+\beta}{2}, 1\Big\},
\end{equation*}
there exists $\tilde u\in \mathcal{C}^{1,\alpha}(\bar\omega,\R^{d+k})$ such that:
$$ \|\tilde u - u\|_0\leq \epsilon \quad\mbox{ and } \quad 
(\nabla \tilde u)^T\nabla \tilde u = g \quad \mbox{ in }\; \bar\omega.$$
\end{theorem}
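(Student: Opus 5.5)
The plan is to follow the same two-level architecture as for Theorem \ref{th_final}: a stage theorem for the isometric immersion system (\ref{II}), analogous to Theorem \ref{thm_stage}, iterated by a Nash--Kuiper scheme analogous to Theorem \ref{th_NK}. The defect is now $\mathcal{D}(g,u)\doteq g-(\nabla u)^T\nabla u$. After mollifying $u$ and $g$ at scale $1/\mu_0$, the same decomposition is used: Lemma \ref{lem_Kallen}, applied to the scaled defect, gives $d_*$ primitive defects $a_{ij}^2\eta_{ij}\otimes\eta_{ij}$.

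These are cancelled by Nash-type or Kuiper-type corrugations in normal directions. Let $\nu_1,\dots,\nu_k$ be a smooth orthonormal normal frame along the current immersion, which exists since $\omega$ is contractible. Each added corrugation has the form
$$\frac{a}{\lambda}\big(\Gamma(\lambda t_\eta)\nu_n + \bar\Gamma(\lambda t_\eta)\,\xi\big),$$
where $\xi$ is a tangential correction built from $\nabla u\,(\nabla u^T\nabla u)^{-1}\eta$. Its metric increment is $a^2\eta\otimes\eta$ plus errors. These errors play the role of those in (\ref{step_err}): a term $\frac{\Gamma}{\lambda}a\,\langle\nabla^2 u,\nu_n\rangle$ (second fundamental form) replaces $a\nabla^2v^n$, and there are terms of order $\lambda^{-2}$ involving $\nabla a$ and $\nabla\nu_n$.

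\textbf{Step 1.} Establish a step lemma replacing Lemma \ref{lem_step2}, with errors written as $\sym$ of a tangential gradient plus a remainder. The key observation is that adding $\nabla u^T\nabla\Phi$-type tangential perturbations reproduces $\sym\nabla W$ up to quadratic errors. This is how the correctors $\Theta$ of (\ref{omega1}) and (\ref{omegap}) are realized: as tangential displacements $\nabla u(\nabla u^T\nabla u)^{-1}\Theta$, with quadratic error of size $|\nabla\Theta|^2$.

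\textbf{Step 2.} In the first normal direction, handle all $\eta_{1j}$ using Lemma \ref{lem_IBP} $N$ times, exactly as in steps 4--8. The $P_i^j$ parts are absorbed into new coefficients $b_{ij}$. In normal directions $n=2,\dots,\bar d$, handle the $\eta_{ij}$ with $2\le i\le j$.

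\textbf{Step 3.} Bound everything with the frequency ladder (\ref{DEF}), obtaining the analogues of \ref{Abound12}--\ref{Abound32}. Then quote the Nash--Kuiper iteration with $J=dK+N$ and $S=KN$. A preliminary stage is needed to bring $\|\mathcal{D}\|_0$ below $1$ and into the regime $\|\mathcal{D}-cH_0\|$ small. For $k>\bar d$, simply leave the extra normal directions unused.

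The main obstacle is the analogue of step 12. There, the error $\frac{\Gamma(\mu_{k+1}t)}{\mu_{k+1}}b\,\nabla^2 v_k^n$ was re-expanded using the explicit formula $v_k^n=v_0^n+\sum_s\mu_s^{-1}\Gamma(\mu_s t)b^s$, which makes every term oscillate in the fixed direction $\eta_{ij}$. In the geometric setting, $\langle\nabla^2u_k,\nu_n\rangle$ is not linear in previous corrugations: the normal frame itself changes as corrugations are added, and the earlier corrugations in other normal directions couple into the second fundamental form in direction $n$. My plan is to freeze the normal frame $\nu_n$ of the mollified $u_0$ throughout a stage, and to add corrugations along these fixed vectors plus tangential corrections, so that
$$\langle\nabla^2 u_k,\nu_n\rangle = \langle\nabla^2 u_0,\nu_n\rangle + \sum_s(\text{explicit oscillatory terms in } \eta_{ij}) + \text{cross terms}.$$
The cross terms come from the non-orthogonality of the frozen frame to the new tangent plane. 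I would show they are quadratic in the small amplitudes $\delta^{1/2}$, hence bounded by $C\delta_k\sigma^{-N}\mu_{k+1}^m$ after the same frequency comparisons. This quadratic-smallness of the cross terms is the crux. I expect it to require an extra factor of $\delta$ that is only available because $\|\mathcal{D}\|_0\le 1$ and the ratios $\mu_{k+1}/\lambda_{k+1}$ are large, and possibly an additional Kallen-type absorption into $\mathcal{D}$ of the non-oscillatory quadratic terms.
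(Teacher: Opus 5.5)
Your overall architecture matches the outline in Section \ref{sec_iso_imm}: a stage for $\mathcal{D}(g,u)=g-(\nabla u)^T\nabla u$ iterated by a Nash--Kuiper scheme, corrugations in normal directions, correctors realized tangentially through $T_u=\nabla u\big((\nabla u)^T\nabla u\big)^{-1}$, and the regime $\mathcal{D}\approx cH_0$. The gap is in the step you yourself call the crux. There you freeze the normal frame $\{\nu_n\}$ of the mollified $u_0$ for the whole stage, and claim the resulting cross terms are quadratic and hence $\leq C\delta_k\sigma^{-N}\mu_{k+1}^m$.

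With $\nu_n$ frozen, $\tau_n\doteq(\nabla u_k)^T\nu_n$ collects all earlier corrugations in the same normal direction. Examples are $\sum_{s\leq k}b^s_{ij}\Gamma'(\mu_s t_{\eta_{ij}})\eta_{ij}$, and for $n=1$ also the $a_{1j'}$, $j'<j$, of the current step. So for $k\geq 1$ you have $|\tau_n|\sim\delta_0^{1/2}$, not $\delta_k^{1/2}$. The metric increment of the next corrugation $\frac{\Gamma(\lambda t_\eta)}{\lambda}a\,\nu_n$ then contains the \emph{linear} cross term $2a\Gamma'(\lambda t_\eta)\sym(\eta\otimes\tau_n)$. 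Its size is $\delta_k^{1/2}\delta_0^{1/2}\geq\delta_k$, at least that of the primitive defect being cancelled, so it cannot be treated as an error. By $\sym\big((\nabla u)^T\nabla\nu_n\big)=\sym\nabla\tau_n-\langle\nabla^2u,\nu_n\rangle$, it must be removed exactly by the tangential term $-T_u\big(\frac{\Gamma(\lambda t_\eta)}{\lambda}a\,\tau_n\big)$. This is the analogue of $-\frac{\Gamma}{\lambda}a\nabla v^n$ in (\ref{defi_per}), and it is not among the corrections you list. Only then does the error take the form $\frac{\Gamma}{\lambda}a\langle\nabla^2u_k,\nu_n\rangle$ that you want to expand.

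Even after this cancellation, the leading gradient of the corrugation is $a\Gamma'(\lambda t_\eta)\,P^\perp_u\nu_n\otimes\eta$, where $P^\perp_u$ is the projection onto the normal space of $u$. So the metric gains $a^2\Gamma'^2\big(1-|T_u\tau_n|^2\big)\eta\otimes\eta$ rather than $a^2\Gamma'^2\eta\otimes\eta$. The deficit $a^2\Gamma'^2|T_u\tau_n|^2\eta\otimes\eta$ has size $\delta_k\delta_0$ and is not oscillatory: averaging in all phases leaves a term of the same size. For $n\geq 2$ this is essentially a multiple of $b_{ij}^2\sum_{s\leq k}(b^s_{ij})^2\,\eta_{ij}\otimes\eta_{ij}$. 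Neither the frequency ratios nor any integration by parts can reduce it. Moreover, $\|\mathcal{D}\|_0\leq 1$ only gives $\delta_k\delta_0\leq\delta_k$: the extra factor you hope for is the stage-initial $\delta_0$, not $\sigma^{-N}$.

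To close the argument you would need one of two things:
\begin{itemize}
\item[(a)] add a hypothesis like $\|\mathcal{D}\|_0\lesssim\sigma^{-N}$ to the stage and propagate it through the iteration; or
\item[(b)] realize your suggested absorption concretely by dividing the amplitude by $|P^\perp_u\nu_n|$, i.e.\ corrugate along the unit normal $P^\perp_u\nu_n/|P^\perp_u\nu_n|$ of the \emph{current} immersion.
\end{itemize}
Option (b) is exactly the paper's choice, the opposite of freezing. The paper corrugates along $E_u$, normal to the current surface. It updates the frame at every $k$ so that $E^n_{u_{k+1}}-E^n_{u_k}$ and its derivatives obey the bounds of $\nabla u_{k+1}-\nabla u_k$ (item 3 in Section \ref{sec_iso_imm}). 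Then no non-orthogonality terms arise, and the leftover $\mathcal{R}$ in (\ref{corru_II}) is of higher order in $\delta$.

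A smaller point: there is no $\sym\nabla w$ to absorb the shift $\frac{2C_k\delta_k}{r_0}H_0$ through $H_0x$ as in $\bar\Theta_{11}$. So the regime $\mathcal{D}\approx c\,\delta H_0$ must be reproduced by every stage (item 1 there), not merely reached once by a preliminary stage.
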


\medskip

\noindent We now outline the differences between the Nash-Kuiper
scheme for the Monge-Amp\`ere system (\ref{MA}) and the isometric
immersion system (\ref{II}), arising in the proof of Theorem \ref{th_final2}: 

\begin{itemize}
\item[1.] A version of Theorem \ref{th_NK} remains valid; see
  \cite[Theorem 1.3]{lew_full2d2k} in case $d=k=2$. Thus, it 
suffices to show a conclusion similar to Theorem
\ref{thm_stage}. The defect is now defined as:
$$\mathcal{D}(g, u) = g - (\nabla u)^T\nabla u.$$
Roughly speaking, assuming that $\|\mathcal{D}\|_0$ is sufficiently
small, and given a sufficiently large $\sigma$, we construct an
updated immersion $\tilde u$ with the properties:
\begin{align}
    &\|\tilde u-u\|_1\leq C\|\mathcal{D}(g,u)\|_0^{1/2},\qquad
    \|\nabla^{(2)}\tilde u\|_0\leq C\sigma^{dK+N}, \nonumber\\
    &\|\mathcal{D}(g,\tilde u)\|_0\leq
    C\Big(\frac{\|\mathcal{D}(g,u)\|_0}{\sigma^{KN}} + \|g\|_{r,\beta} \Big),\nonumber
\end{align}
where constants $C$ depend on $g$, $N, K$ and also on $\|u\|_2$ and
the immersion constant of $u$. In practice, since
the system (\ref{II}) is fully nonlinear and one cannot take
advantage of the linear component $\sym \nabla w$ in (\ref{MA}),
the defect needs to be calculated and decreased with respect to a
scaled version of $H_0$, whereas the last estimate above takes the
following form with the following initial assumption (compare \cite[Theorem 1.2]{lew_full2d2k}):
$$\big\|\mathcal{D}\big(g-\frac{\delta}{\sigma^{NK}}H_0, \tilde u\big
)\big\|_0\leq \frac{r_0}{5}\frac{\delta}{\sigma^{NK}} + C
\|g\|_{r,\beta} \quad \mbox{with} \quad 
 \|\mathcal{D}(g-\delta H_0, u)\|_0\leq \frac{r_0}{4}\delta. $$

\smallskip

\item[2.] A version of Kuiper's corrugation in Lemma \ref{lem_step2} is as follows (see \cite{CHI2, lew_full2d2k}):
\begin{equation*}        
\begin{split}
& \tilde u= u + \frac{\Gamma(\lambda t_\eta)}{\lambda} a(x)E_u(x) + 
T_{u}(x) \Big(\frac{\bar\Gamma(\lambda t_\eta)}{\lambda}a(x)^2\eta +
\frac{\dbar\Gamma(\lambda t_\eta)}{\lambda^2}a(x)\nabla u(x) + \Theta\Big),
\\ & \mbox{where }\; T_u = (\nabla u)\big((\nabla u)^T\nabla u\big)^{-1},
\end{split}
\end{equation*}
with $\Gamma(t) = \sqrt{2}\sin t$, $\bar\Gamma(t) = -\frac{1}{4}\sin
(2t)$ and $\dbar\Gamma(t) = \frac{1}{4}\cos (2t)$. 
Note the vector $E_u$ which is unit normal to the surface $u(\omega)$, 
and replaces the previously constant codimension direction $e_n$; and the
$w$-corrugation in (\ref{defi_per}), appearing as the tangential
component of $\tilde u - u$, carried by the tangent frame
$T_u$. The additional perturbation $\Theta$ arises from the
anticipated application of Lemma \ref{lem_IBP}. The formula
(\ref{step_err}) consequently becomes: 
\begin{equation}\label{corru_II}
\begin{split}
& (\nabla\tilde u)^T\nabla \tilde u - (\nabla u)^T\nabla u
-a(x)^2\eta\otimes \eta \\
& = -2\frac{\Gamma(\lambda t_\eta)}{\lambda} a [\langle\partial_s\partial_tu,
E_u\rangle]_{s,t=1\ldots d} + 2\frac{\dbar\Gamma(\lambda
  t_\eta)}{\lambda^2} a\nabla^2a + \frac{\tbar\Gamma(\lambda
  t_\eta)}{\lambda^2}\nabla a\otimes\nabla a +\mathcal{R},
\end{split}
\end{equation}
where $\tbar\Gamma(t) = 1-\frac{1}{2}\cos(2t)$. 
The additional error term $\mathcal{R}$ involves derivatives of $a$,
$E_u$, $T_u$, and $\Theta$; see \cite[Lemma 2.5]{lew_full2d2k} for a
precise version of the above calculation without the $a \nabla u$
component in $\tilde u$. In practice, $\mathcal{R}$ is of higher
order, appearing in the proofs as powers of the defect measure $\delta$,
greater than $1$, see \cite[Lemma 3.1]{CHI2}. Since
$\delta$ can be assumed to be sufficiently small relative to a fixed but
possibly large power of $\|\nabla^{(2)} u\|_0$, the term $\mathcal{R}$
never competes with the three leading order terms in (\ref{corru_II}).

\smallskip

\item[3.] While translating steps 1-12 in the proof of Theorem \ref{thm_stage},
the intermediate immersions $\{u_k\}_{k=0}^K$ are constructed with the
uniform immersion constants. In this construction, the
codimension vectors $\{e_n\}_{n=1\ldots\bar d}$ are replaced by a frame
of normal vectors $\{E_u^n\}_{n=1\ldots\bar d}$. 
The initial frame, normal to the surface $u_0(\omega)$, can be
chosen independently as in \cite{CaoIn2024}. However, along the
iteration in $k=1\ldots K$, the subsequent frames must be updated relative to
the previous ones so that the differences 
$(E^n_{u_{k+1}} - E^n_{u_k})$, together with
their derivatives, satisfy bounds consistent with those on
$(\nabla u_{k+1} - \nabla u_k)$. This general update procedure has been
carried out in \cite[Section 3]{lew_full2d2k} and applies here with
minor modifications, due to $d,\bar d \geq 2$ rather
than the case $d = k = 2$. See also the discussion in \cite{In2026} in
the context of codimension $k=1$ isometric immersions. 

\end{itemize}

\end{document}